\providecommand{\keywords}[1]{{\textit{Keywords and phrases:}} #1}
\providecommand{\classification}[1]{{\textit{2020 Mathematics Subject Classification:}} #1}
\newcommand{\Kantorovich}[1]{{\text{mk}_{#1}}}
\newcommand{\MongeKant}{{Mon\-ge-Kan\-to\-ro\-vich metric}}
\newcommand{\qcms}{quantum compact metric space}
\newcommand{\unit}{1}
\newcommand{\dom}[1]{{\operatorname*{dom}\left({#1}\right)}}
\newcommand{\worknote}[1]{}
\renewcommand{\geq}{\geqslant}
\renewcommand{\leq}{\leqslant}
\newtheorem{thm}{Theorem}[section]
\newtheorem{prop}[thm]{Proposition}
\newtheorem{cor}[thm]{Corollary}
\newtheorem{lemma}[thm]{Lemma}
\theoremstyle{remark}
\newtheorem{rmk}[thm]{Remark}
\newtheorem{example}[thm]{Example}
\theoremstyle{definition}
\newtheorem{defn}[thm]{Definition}
\newcommand{\bi}{\begin{itemize}}
	\newcommand{\ei}{\end{itemize}}
\newcommand{\be}{\begin{enumerate}}
	\newcommand{\ee}{\end{enumerate}}
\newcommand{\T}{\mathbb{T}}
\newcommand{\R}{\mathbb{R}}
\newcommand{\N}{\mathbb{N}}
\newcommand{\Z}{\mathbb{Z}}
\newcommand{\length}{\mathbb{L}_p^{\Sigma}}
\providecommand{\keywords}[1]{{\textit{Keywords and phrases:}} #1}
\providecommand{\classification}[1]{{\textit{2010 Mathematics Subject Classification:}} #1}
\def\IoIIdimdots(#1/#2/#3,#4){\node at (#1,#4) {$.$};\node at (#2,#4) {$.$};\node at (#3,#4) {$.$};}
\def\IIoIIdimdots(#1,#2/#3/#4){\node at (#1,#2) {$.$};\node at (#1,#3) {$.$};\node at (#1,#4) {$.$};}
\def\IoIIIdimdots(#1/#2/#3,#4,#5){\node at (#1,#4,#5) {$.$};\node at (#2,#4,#5) {$.$};\node at (#3,#4,#5) {$.$};}
\def\IIoIIIdimdots(#1,#2/#3/#4,#5){\node at (#1,#2,#5) {$.$};\node at (#1,#3,#5) {$.$};\node at (#1,#4,#5) {$.$};}
\def\IIIoIIIdimdots(#1,#2,#3/#4/#5){\node at (#1,#2,#3) {$.$};\node at (#1,#2,#4) {$.$};\node at (#1,#2,#5) {$.$};}
\par\textsc{T. Landry: The Fields Institute for Research in Mathematical Sciences, 222 College Street, Second Floor, Toronto, Ontario M5T 3J1, Canada.}\par \textsc{N.S. Larsen: Department of Mathematics, University of Oslo, P.O. Box 1053 Blindern, N-0316 Oslo, Norway.}\par
\begin{document}
	
\title{Spectral Triples for Noncommutative Solenoids and a Wiener's lemma}
\author {Carla Farsi, Therese Basa Landry, Nadia S. Larsen  and Judith A. Packer}
\date{\today}
\maketitle

\tableofcontents
	
	\begin{abstract} 
		In this paper we  construct odd finitely summable spectral triples based on length functions of bounded doubling on  noncommutative solenoids.  Our spectral triples induce a Leibniz Lip--norm on the state spaces of the noncommutative solenoids, giving them  the structure of Leibniz quantum compact metric spaces. By applying methods of R. Floricel and A. Ghorbanpour, we also  show that our odd spectral triples on noncommutative solenoids can be considered as direct limits of spectral triples on rotation algebras.   In the final section we prove a noncommutative Wiener's lemma and  show that our odd spectral triples can be defined to have an associated smooth dense subalgebra which is stable under the holomorphic functional calculus, thus answering a question of  B. Long and  W. Wu. The construction of the smooth subalgebra also extends to the case of nilpotent discrete groups.
		
			\end{abstract}
		
		\classification{46L87, 22D15,  58B34 (primary), and 47B07 (secondary)}.
		
		\keywords{Spectral triples, inductive limits, twisted group C*-algebras, bounded doubling length functions, nilpotent groups, Wiener's lemmas.}

\section{Introduction}

Our aim in this paper is to build and study noncommutative manifold structures (specifically spectral triples and their associated metric structures) on    noncommutative solenoid $C^*$-algebras, as well as to analyze some of their  associated smooth subalgebras. Our  methods also apply to commutative solenoids as well, even though we will not emphasize the commutative case here.

In the 1980's, A. Connes initiated a program to adapt classical tools from topology and Riemannian geometry to the operator algebraic setting.  For example, Connes showed in \cite{Con89, connes} that the geodesic distance $d_g$ on a closed spin Riemannian manifold $M$ can be recovered from the $C^*$-algebra $C(M)$, the Hilbert space $H$ of $L^2$-spinor fields, and the Dirac operator $D$, via, $\;\forall x,\;y\in M,$ 
\begin{equation}\label{def:MK-metric}
d_g (x, y) = \sup \left\{ |f(x) - f(y)| \, : \, f \in C(M),\,  || [D, f] ||_{\mathcal{B}(H)} \leq 1 \right\}.
\end{equation}
 These ideas were generalized to the context of noncommutative $C^*$-algebras, which    Connes formalized in the  definition of a \textit{spectral triple}. A spectral triple,  in its most basic form, is given by  a unital $C^*$-algebra $A$ represented on a Hilbert space $H,$ and a Dirac operator $D$ behaving well with respect to taking the commutators with a dense $\ast$-subalgebra of $A.$ Under appropriate circumstances, the  Monge-Kantorovich metric $mk_L$ on the state space ${\mathcal S}(A)$ of $A$ (which is defined generalizing Equation \eqref{def:MK-metric}) induces the weak$^*$-topology. In this paper, all spectral triples will be odd, that is, ungraded.

M. Rieffel considered, more generally and in absence of a spectral triple,  similar metric  structures on unital $C^*$-algebras (and in particular on  their state spaces) associated to seminorms; the resulting spaces  were named  {\it compact quantum  metric spaces}, see  \cite{Rieffel2}.    F. Latr\'emoli\`ere (see e.g. \cite{La}) considered a special case of Rieffel's spaces, and switched  terminology to {\it quantum compact metric spaces}, terminology which we will follow in this paper. 
Rieffel also became interested in the problem of constructing spectral triples on reduced group $C^*$-algebras and twisted reduced group $C^*$-algebras associated to  discrete groups  by using the left regular representation together with a Dirac operator constructed from a proper length function, following pioneering work by Connes \cite{Con89}. The class of groups where this approach could be used to derive not only spectral triples, but also examples of  compact quantum  metric spaces, was extended to include finitely generated nilpotent groups with a length function of {\it bounded doubling}, see  \cite{Rieffel4, Christ-Rieffel}.

In two recent papers \cite{Long-Wu1, Long-Wu2}, Long and Wu followed up on the work of Christ and Rieffel, by constructing odd spectral triples for  twisted group $C^*$-algebras $C^*(\Gamma,\sigma)$, in which $\Gamma$ is not necessarily finitely generated, but it is endowed with a  length function satisfying either the  {\it $\sigma$-twisted rapid decay} or the  {\it bounded $\theta$-dilation} properties. (In this paper we will call this latter condition {\it bounded ${\bf t}$-dilation}).  However, the question of whether or not one could construct length functions satisfying the desired conditions on discrete nilpotent groups that were not finitely generated remained an open one, even for the case of discrete abelian groups.

As far as this paper is concerned, the prime examples  of  twisted abelian group $C^*-$algebras in which the underlying abelian groups are  not finitely generated are the so  called {\it noncommutative solenoids}. 
Noncommutative solenoids were introduced and studied  by Latr\'emoli\`ere and  Packer in \cite{LaPa1}, \cite{LaPa} and  \cite{LaPa2}, and  can  be realized as the twisted group $C^*$-algebras $C^*(\mathbb Z\Big[\frac{1}{p}\Big]\times \mathbb Z\Big[\frac{1}{p}\Big], \sigma )$, for a certain multiplier $\sigma$
(a multiplier is a 2-cocycle with values in $\T$). Here, as throughout the rest of the paper, $p$ is a fixed prime number. 
 
Latr\'emoli\`ere and  Packer  showed that these $C^*$-algebras have the structure of Leibniz quantum compact metric spaces, and are the limit in the direct sense  and in the sense of the Gromov-Hausdorff propinquity of noncommutative rotation algebras. However the   question of constructing spectral triples on them was not considered in their papers. 

Since Floricel and  Ghorbanpour in \cite{FlGh}  discuss the construction of direct limits of odd spectral triples for unital $C^*$-algebras, it thus became a natural question to ask whether spectral triples as constructed  by Connes \cite{Con89} and Rieffel \cite{Rieffel4} (as well as by Long and Wu \cite{Long-Wu1, Long-Wu2}) could be combined with the methods of Floricel and Ghorbanpour \cite{FlGh} to endow noncommutative solenoids  with   spectral triple structures (of inductive limit type). This is one of the main motivating questions that brought us to write this paper.

For the sake of completeness, we also note that V. Aiello, D. Guido and T. Isola in  \cite{AGI-NC-Sol}  constructed semifinite spectral triples on periodic (but not general) rational noncommutative solenoids.  Moreover, via Gabor analysis methods, A. Austad and F. Luef  construct spectral triples on noncommutative solenoids in  \cite{Aust-Luef}.  However, neither of these approaches are entirely satisfactory for our purposes.  The work of \cite{AGI-NC-Sol} gives semifinite and not finite spectral triples, and does not apply when looking at aperiodic noncommutative solenoids, while the work of Austad and Luef does not provide specific examples of weights that satisfy their needed conditions to obtain any specific spectral triple on the noncommutative solenoid.

 Therefore, in this paper one of our main aims is to combine the methods of Christ and Rieffel \cite{Christ-Rieffel},	and then Long and Wu \cite{Long-Wu1, Long-Wu2} to construct a length function on $\mathbb Z\Big[\frac{1}{p}\Big]\times \mathbb Z\Big[\frac{1}{p}\Big]$ with the bounded doubling property or bounded ${\bf t}$-dilation property.  This allows us to construct the desired finitely summable spectral triples for  arbitrary noncommutative solenoids.  Our spectral triples define a quantum compact metric space structure on noncommutative solenoids as well.  The spectral triples we construct are also direct limits of spectral triples on rotation algebras in the sense of Floricel and Ghorbanpour \cite{FlGh}.

 In a more general setting, we also establish that for any countable discrete nilpotent (hence abelian, so that our results can be applied to $\mathbb Z\Big[\frac{1}{p}\Big]\times \mathbb Z\Big[\frac{1}{p}\Big]$) group $\Gamma,$ the $\ast$-subalgebra $H^{1,\infty}_{\mathbb L}(\Gamma,\sigma)$  is stable under the holomorphic functional calculus in $\ell^1(\Gamma, \sigma)$, and that this latter $\ast$-algebra is 
stable under the holomorphic functional calculus in $C^*(\Gamma,\sigma). $   We do this by adapting  work of P. Jolissaint in \cite{jol} together with  work of Austad \cite{Aust} involving the noncommutative Wiener Lemma of K. Gr\"ochenig and M. Leinert \cite{Groch-Lein}. In doing so, we  thus generalize some  results of I. Chatterji (see the appendix of  \cite{Mathai}), 
 which requires $\sigma$-twisted rapid decay.
 This will show that when $\Gamma$ is countable, discrete and nilpotent, $H^{1,\infty}_{\mathbb L}(\Gamma,\sigma)$ is stable under the holomorphic functional calculus in $C^*(\Gamma,\sigma).$   This also answers a question of Long and Wu from \cite{Long-Wu2} in the solenoid case.

 It remains to be seen whether the spectral triples presented here are limits of spectral triples on noncommutative rotation algebras  in the sense of Latr\'emoli\`ere (\cite{Lan-Lap-Lat, La22}).  
 
 The structure of our paper is as follows.  
 In Section 2, we review some preliminaries by giving the required definitions.  In Section 3, we construct the desired length function on $\Gamma=\mathbb Z[\frac{1}{p}]\times \mathbb Z[\frac{1}{p}]$ and show that the Dirac operator arising from this length function gives rise to a spectral triple on the noncommutative solenoid. We also show that we can use the structure of a  noncommutative solenoid as an inductive limit of noncommutative rotation algebras to view our spectral triple as one coming from an inductive system of spectral triples on noncommutative rotation algebras as defined by Floricel and Ghorbanpour \cite{FlGh}.  In Section 4, for any countable discrete  nilpotent group $\Gamma$ and multiplier $\sigma,$ we show that  the dense $\ast$-subalgebra $H^{1,\infty}_{\mathbb L}(\Gamma,\sigma)$ is stable  in $\ell^1(\Gamma,\sigma)$ under the holomorphic functional calculus, and that in turn $\ell^1(\Gamma,\sigma)$ is stable under the holomorphic functional calculus in $C^*(\Gamma,\sigma)$  (noncommutative Wiener's lemma). In the  case where the length function has bounded ${\bf t}$-dilation for some ${\bf t}>1,$ it is possible to use $H^{1,\infty}_{\mathbb L}(\Gamma,\sigma)$ as the dense smooth subalgebra associated to our spectral triples, thus answering the question of Long and Wu posed in \cite{Long-Wu2} in the solenoid case. 
 
At a late stage in the preparation of this manuscript, the authors became aware of a paper by P. Antonini, D. Guido, T. Isola, and A. Rubin \cite{AGIR}, in which they reconstructed the odd spectral triples of Long and Wu \cite{Long-Wu2} for twisted group $C^*$-algebras, as well as an even version of them. We  cite this paper for completeness.

 {\bf Acknowledgments:} This project began as a result of the \lq \lq Women in Operator Algebras II Workshop" held in BIRS, Banff, Canada, in December 2021.  
 
 This research was also  supported by the Simons Foundation (Simons Foundation Collaboration grants \#523991
 (C.F.) and \#316981 (J.P.)).  
 The fourth author would also like to thank the third author for her hospitality during her visits to the University of Oslo. 
 
 This material is based upon work supported by the National Science Foundation under Grant No. DMS-1928930 while the second author was in residence at the Mathematical Sciences Research Institute in Berkeley, California, during the Spring 2022 semester, as well as upon work supported by NSERC and the Government of Ontario while she was in residence at the Fields Institute for Research in Mathematical Sciences during the Summer and Fall of 2022.

\section{Preliminaries}

 We start by reviewing the definition of spectral triples.

\begin{defn}
[Connes \cite{Con89, connes}]
	\label{def:spectraltriple}
	A {\bf spectral triple} $(A, H, D)$ consists of a unital $C^*$-algebra $A$, a unital faithful representation $\pi$ of $A$ on a Hilbert space $H$, and a self-adjoint operator $D : \text{dom}(D) \subseteq H \rightarrow H$ such that
	\begin{itemize}
\item [(ST1)] the operator $D$ has compact resolvent $R_{\lambda}(D) =(D - \lambda \text{Id}_H)^{-1}$, $\lambda \in \mathbb{C} \setminus \sigma(D)$,
    
    \item [(ST2)] there exists a dense $\ast$-subalgebra ${\mathcal A}$ of $A$ such that for every $a\;\in\;{\mathcal A}$
    the commutator
    \[[D,\pi(a)]:= D\pi(a)-\pi(a)D\]
     is densely defined  and extends to a bounded operator on  $H.$  The subalgebra ${\mathcal A}$ is sometimes referred to as a {\bf smooth subalgebra} of $A$ with respect to $D.$
\end{itemize}
\noindent    
The operator $D$ is called a (generalized) \textbf{Dirac operator}. 
\end{defn}

Spectral triples generalize differential structure, as seen from the pioneering example of Connes of a  spectral triple associated to a spin manifold (in this case the spectral triple operator is the Dirac operator) \cite{Con89}.  The dense set in $A$ described in condition (ST2) is therefore analogous to the dense set of $C^\infty$ functions in the continuous functions $C(M)$ in the manifold case.  The compact resolvent condition ensures that the eigenvalues of $D$ exhibit properties that allow for the extraction of geometric information like measure and dimension from spectral data. 

The following is another example due to  Connes that is a special case of his result for more general group $C^*$-algebras. It was adapted to twisted group $C^*$-algebras for $\mathbb Z^d$ by Rieffel. 

\begin{example}(\cite{Con89},\cite{Rieffel4}))
\label{example_spectraltriplegroupalg}
To define a spectral triple for the group $C^*$-algebra $C^*(\mathbb Z^d),$ let $\|\cdot\|$ be any norm on $\mathbb R^d$ restricted to $\mathbb Z^d.$  Let $\pi$ be the regular representation of $C^*(\mathbb Z^d)$ on $\ell^2(\mathbb Z^d).$  Define the linear operator $D_{\|\cdot\|}$ on $\ell^2(\mathbb Z^d)$ by 
$$D_{\|\cdot\|}f(v)\;=\;\|v\|\cdot f(v),\;v\in\mathbb Z^d.$$
 The results of \cite{Rieffel4}, Theorem 0.1, show that $(C^*(\mathbb Z^d), \ell^2(\mathbb Z^d), D_{\|\cdot\|})$ is a spectral triple with smooth subalgebra $C_C(\mathbb Z^d):=\mathbb C\mathbb Z^d.$ This example extends to twisted group $C^*$-algebras $C^*(\mathbb Z^d,\sigma)$ as well.  
\end{example}
$\newline$

\subsection{Inductive Limit Spectral Triples}

E. Christensen and C. Ivan built spectral triples for $AF$-algebras \cite{ch-iv}, as did Connes in the particular case of the commutative $C^*$-algebra of all continuous functions on a Cantor set \cite{connes}.  Though slightly different, both  constructions of spectral triples have $C^*$-algebras $A$, Hilbert spaces $H$, and Dirac operators $D$, which for some directed set $(J, \leq)$, can each, respectively, be written as inductive limits of $C^*$-algebras $ \displaystyle{\lim_{\substack{\longrightarrow \\ j \in J}}} \, A_j$, Hilbert spaces $\displaystyle{\lim_{\substack{\longrightarrow \\ j \in J}}} \, H_j$, and self-adjoint operators  $\displaystyle{ \lim_{\substack{\longrightarrow \\ j \in J}}} \, D_j$.  Furthermore, $\{ (A_j, H_j, D_j) \}_{j \in J}$ is a family of spectral triples for which maps can be defined making this collection into an inductive system with inductive limit $(A, H, D)$.  Floricel and  Ghorbanpour formalized a framework which includes these inductive systems of spectral triples, as we will review below.
 
 The spectral triples for solenoids that we will construct in this paper will also be of this inductive limit kind.  In the Definition below, we slightly modify the  definition of Floricel and Ghorbanpour to allow choices of different smooth subalgebras.

\begin{defn}
[Floricel-Ghorbanpour \cite{FlGh}]
	\label{def:FGmorphism}
	A {\bf morphism between two spectral triples} $(A_1, H_1, D_1)$ and $(A_2, H_2, D_2)$ with respective smooth subalgebras ${\mathcal A}_1$ and ${\mathcal A}_2$ is a pair $(\phi, I)$ consisting of a unital $^*$-homomorphism $\phi: A_1 \rightarrow A_2$ and a bounded linear operator $I: H_1 \rightarrow H_2$ satisfying the following conditions:
	
	(1) $\phi( {\mathcal A}_1)\subseteq {\mathcal A}_2$, where ${\mathcal A}_1$ and ${\mathcal A}_2$ are as in $(ST2)$,
	
	(2) $I\pi_1(a) = \pi_2(\phi (a) ) I$, for every $a \in A_1$,
	
	(3) $I(\text{dom}(D_1)) \subseteq \text{dom}(D_2)$ and $I D_1 = D_2 I$.
	$\newline$
A morphism $(\phi, I)$ is said to be {\bf isometric} if $\phi$ is injective and $I$ is an isometry.
\end{defn}

\begin{defn}
[Floricel-Ghorbanpour \cite{FlGh}]
	\label{def:FGinductivelim}
	Let $(J, \leq)$ be a directed index set and let $\{ (A_j, H_j, D_j) \}_{j \in J}$ be a family of spectral triples, with respective smooth subalgebras $\{{\mathcal A_j}\}_{j \in J}.$  Suppose that for every $j, k \in J$, $j \leq k$, an isometric morphism $(\phi_{j,k}, I_{j,k})$ from $(A_j, H_j, D_j)$ to $(A_k, H_k, D_k)$ is given such that $\phi_{k,l} \phi_{j,k} = \phi_{j,l}$ and $I_{k,l} I_{j,k} = I_{j,l}$ for all $j,k,l \in J$, $j \leq k \leq l$.  The resulting system $\{ (A_j, H_j, D_j), (\phi_{j,k}, I_{j,k}) \}_J,$ with respective smooth subalgebras $\{{\mathcal A_j}\}_{j \in J},$ is called an {\bf inductive system of spectral triples}.
\end{defn}

\begin{defn}
[Floricel-Ghorbanpour \cite{FlGh}]
	\label{def:FGspectraltriple}
     Let $\{ (A_j, H_j, D_j), (\phi_{j,k}, I_{j,k}) \}_J$ be an inductive system of spectral triples, with respective smooth subalgebras $\{{\mathcal A_j}\}_{j \in J},$ with 
     $$A \;=\; \lim_{\substack{\longrightarrow \\ j \in J}} A_j, \quad H \;=\; \lim_{\substack{\longrightarrow \\ j \in J}} H_j, \quad $$
    $\newline$
    and inductive limit isometric operators $I_j: H_j \rightarrow H$, and the $^*$-monomorphisms $\phi_j: A_j \rightarrow A$. Let 
    $$\pi \;=\; \lim_{\substack{\longrightarrow \\ j \in J}} \pi_j $$
    $\newline$
    be the inductive limit of the family of representations $\{ \pi_j \}_{j \in J}$ associated with the family of spectral triples $\{ (A_j, H_j, D_j) \}_{j \in J}$. Let 
    $${\mathcal A}\;=\;\lim_{\substack{\longrightarrow \\ j \in J}} {\mathcal A}_j,$$
    where the second direct limit is taken in the sense of algebra, without completion.
      For every $\xi \in \bigcup_{j \in J} I _j(\text{dom}(D_j))$ of the form $\xi = I_j \xi_j$, $\xi_j \in \text{dom}(D_j)$, define
    $$ D \xi = I_j D_j \xi_j .$$
    $\newline$
    Then $(A, H, D)$ is called the {\bf inductive realization} of the inductive system $\{ (A_j, H_j, D_j), (\phi_{j,k}, I_{j,k}) \}_J$.  It has corresponding subalgebra ${\mathcal A},$ which is norm dense in $A.$
\end{defn}

Not every inductive realization of spectral triples gives rise to a spectral triple, because it is necessary that the candidate for the Dirac operator $D$ satisfies Condition (ST1) for spectral triples, and that the candidate ${\mathcal A}$ for the smooth subalgebra of inductive limit algebra satisfy Condition (ST2) for spectral triples.  However, in two theorems, Floricel and Ghorbanpour give necessary and sufficient conditions for an inductive realization of spectral triples to give rise to a spectral triple, as follows:

\begin{thm} [\cite{FlGh} ,Theorem 3.1 ]
\label{mainthm-FG}	
Let $\{ (A_j, H_j, D_j), (\phi_{j,k}, I_{j,k}) \}_J$ be an inductive system of spectral triples, with respective smooth subalgebras $\{{\mathcal A_j}\}_{j \in J},$ and 
with inductive realization $(A, H, D)$ and corresponding norm-dense subalgebra ${\mathcal A}$ as in Definition \ref{def:FGspectraltriple}.  The following conditions are equivalent.
\begin{itemize}
\item [(i)] $D$ has compact resolvent;
\item [(ii)] the sequence $\{I_j R_{\lambda}(D_j)I_j^*\}_{j\in\mathbb N}$ converges uniformly to $R_{\lambda}(D)$ for every $\lambda\in\mathbb C\backslash \mathbb R;$
\item [(iii)] the sequence $\{I_j R_{\lambda}(D_j)I_j^*\}_{j\in\mathbb N}$
converges uniformly to $R_{\lambda}(D)$ for some $\lambda\in\mathbb C\backslash \mathbb R;$
\item [(iv)] the sequence $\{I_j f(D_j)I_j^*\}_{j\in\mathbb N}$
converges uniformly to $f(D)$ for every continuous function $f$ on $\mathbb R$ vanishing at infinity.
\end{itemize}
\end{thm}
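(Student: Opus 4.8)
The plan is to reduce all four statements to a single clean operator identity and then invoke one standard compactness lemma. The structural input I would record first is that the limit embeddings $I_j \colon H_j \to H$ are isometries with $I_j = I_k I_{j,k}$ for $j \leq k$, so that $I_j^* I_j = \id_{H_j}$ while $P_j := I_j I_j^*$ is the orthogonal projection onto the closed subspace $\ran(I_j) = I_j(H_j)$. Because $\ran(I_j) \subseteq \ran(I_k)$ for $j \leq k$ and $\bigcup_j \ran(I_j)$ is dense in $H = \varinjlim H_j$, the projections increase strongly to the identity, so that $Q_j := \id_H - P_j \to 0$ strongly. Throughout I take $D$ to be the self-adjoint operator furnished by the inductive realization, so that the resolvents $R_\lambda(D)$ for $\lambda \in \C \setminus \R$ and the continuous functional calculus $f(D)$ for $f \in C_0(\R)$ are available.

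The heart of the argument is the intertwining identity. From condition (3) of Definition \ref{def:FGmorphism}, carried to the limit, $D I_j = I_j D_j$ on $\operatorname{dom}(D_j)$, hence $(D - \lambda) I_j = I_j (D_j - \lambda)$ for every $\lambda \in \C \setminus \R$; multiplying on the appropriate sides by the resolvents gives $I_j R_\lambda(D_j) = R_\lambda(D) I_j$. Since the $C^*$-algebra generated inside $C_0(\R)$ by the resolvent functions $t \mapsto (t - \lambda)^{-1}$ is all of $C_0(\R)$ (Stone--Weierstrass), continuity of the functional calculus upgrades this to $I_j f(D_j) = f(D) I_j$ for every $f \in C_0(\R)$. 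Composing with $I_j^*$ and using $I_j I_j^* = P_j$ yields the key identity
\[
I_j f(D_j) I_j^* \;=\; f(D)\, P_j, \qquad f \in C_0(\R),
\]
and in particular $I_j R_\lambda(D_j) I_j^* = R_\lambda(D) P_j$. Thus each sequence appearing in (ii)--(iv) is literally $f(D) P_j$, and uniform convergence amounts to $\|f(D)(\id_H - P_j)\| = \|f(D) Q_j\| \to 0$.

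With this identity in hand I would close the cycle $(i) \Rightarrow (iv) \Rightarrow (ii) \Rightarrow (iii) \Rightarrow (i)$. For $(i) \Rightarrow (iv)$: if $D$ has compact resolvent then $f(D)$ is compact for every $f \in C_0(\R)$, as it belongs to the $C^*$-algebra generated by the compact operators $R_\lambda(D)$, and the compactness lemma below gives $\|f(D) Q_j\| \to 0$. The implication $(iv) \Rightarrow (ii)$ is immediate on taking $f = f_\lambda$ with $f_\lambda(t) = (t-\lambda)^{-1} \in C_0(\R)$, since $f_\lambda(D) = R_\lambda(D)$ and $f_\lambda(D_j) = R_\lambda(D_j)$; and $(ii) \Rightarrow (iii)$ is trivial. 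Finally $(iii) \Rightarrow (i)$: for the given $\lambda$ each $I_j R_\lambda(D_j) I_j^*$ is compact because $R_\lambda(D_j)$ is compact by (ST1), and a norm limit of compact operators is compact, so $R_\lambda(D)$ is compact; compactness of the resolvent at one non-real point propagates to all of $\C \setminus \R$ via the resolvent identity, which is exactly (i).

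The single analytic ingredient, and the place to be careful, is the lemma that if $K$ is compact and $Q_j \to 0$ strongly with $\|Q_j\| \leq 1$ then $\|K Q_j\| \to 0$: passing to adjoints, $\|K Q_j\| = \|Q_j K^*\|$, and since $K^*$ is compact its image of the unit ball is precompact, on which the strong convergence $Q_j \to 0$ is uniform. I expect the main obstacle to be not this lemma but the bookkeeping that makes the reduction legitimate --- namely verifying that $D$ is genuinely self-adjoint (so that $R_\lambda(D)$ and $f(D)$ exist), that $P_j \to \id_H$ strongly, and that the intertwining passes from the resolvents to all of $C_0(\R)$. Once the identity $I_j f(D_j) I_j^* = f(D) P_j$ is established, the equivalences are formal.
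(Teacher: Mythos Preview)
The paper does not give its own proof of this theorem; it is quoted verbatim from \cite{FlGh} (Theorem 3.1 there) and used as a black box in the construction of Section~3.5. So there is nothing in this paper to compare your argument against.

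That said, your proof is correct and is essentially the argument one finds in \cite{FlGh}. The reduction to the single identity $I_j f(D_j) I_j^* = f(D) P_j$ via the intertwining $I_j R_\lambda(D_j) = R_\lambda(D) I_j$ and Stone--Weierstrass is exactly the right move, and the cycle $(i)\Rightarrow(iv)\Rightarrow(ii)\Rightarrow(iii)\Rightarrow(i)$ closes cleanly once that identity is in hand. The compactness lemma you isolate (compact times strongly null is norm null) is standard and your proof of it is fine.

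The one point you correctly flag but do not resolve is the essential self-adjointness of the operator $D$ defined in Definition~\ref{def:FGspectraltriple} on $\bigcup_j I_j(\operatorname{dom}(D_j))$. This is established in \cite{FlGh} (their Proposition~2.3) prior to the theorem you are proving, and without it the resolvents $R_\lambda(D)$ and the functional calculus $f(D)$ are not available. Since you explicitly assume it (``I take $D$ to be the self-adjoint operator furnished by the inductive realization'') and name it as the main bookkeeping obstacle, your proof is honest about its scope; but a fully self-contained version would need to supply that step.
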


 Notice in Theorem \ref{mainthm-FG} that if the Dirac operators $\{D_j\}_{j\in\mathbb N}$ are diagonalizable,  a necessary condition for any one of the equivalent conditions of this Theorem to be satisfied is that for any $r>0,$ the closed disk around $0$ of radius $r$ in the complex plane should only contain finitely many eigenvalues of the $\{D_j\},$ where the eigenvalues are counted with multiplicity, or else it will be impossible for $D$ to have compact resolvent.   
Here is the second main Theorem and its fundamental corollary of Floricel and Ghorbanpour, which guarantees that Condition (ST2) is satisfied:

\begin{thm}
\label{mainthm2-FG}	
Let $\{ (A_j, H_j, D_j), (\phi_{j,k}, I_{j,k}) \}_J$ be an inductive system of spectral triples, with respective smooth subalgebras $\{{\mathcal A_j}\}_{j \in J},$ and let ${\mathcal A}\;=\;\lim_{j\to \infty} {\mathcal A_j}$ be the algebraic direct limit, and $D$ the corresponding candidate for Dirac operator given in Definition \ref{def:FGspectraltriple}.  Then the operator $[D, \pi(\phi_j(a))]$ is bounded if and
only if the family of operators $\{[D_k, \pi_k(\phi_{j,k}(a))]\}_{k\geq j}$ is uniformly bounded.
\end{thm}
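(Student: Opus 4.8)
The plan is to reduce the entire statement to a single intertwining identity relating the limit commutator to the finite-level commutators, and then to read off both implications from it. Write $b := \phi_j(a) \in \mathcal{A}$ and, for $k \geq j$, $b_k := \phi_{j,k}(a)$; by condition (1) of Definition \ref{def:FGmorphism} we have $b_k \in \mathcal{A}_k$, so each $[D_k, \pi_k(b_k)]$ is already bounded as part of the data (ST2) of the spectral triple $(A_k, H_k, D_k)$. First I would record the three intertwining relations at the level of the inductive realization that the finite-level structure forces: the compatibility $\phi_k \circ \phi_{j,k} = \phi_j$ of the limit monomorphisms with the connecting maps (so $\phi_k(b_k) = b$); the covariance $I_k \pi_k(c) = \pi(\phi_k(c)) I_k$ for $c \in A_k$, which is the passage to the limit of condition (2); and the relation $D I_k \eta = I_k D_k \eta$ for $\eta \in \text{dom}(D_k)$, which is exactly how $D$ is defined in Definition \ref{def:FGspectraltriple} and which also gives $I_k(\text{dom}(D_k)) \subseteq \text{dom}(D)$.

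With these in hand, the core computation is to expand $[D, \pi(b)] I_k \xi_k$ for $\xi_k \in \text{dom}(D_k)$ with $\pi_k(b_k)\xi_k \in \text{dom}(D_k)$. Using $\pi(b) I_k \xi_k = \pi(\phi_k(b_k)) I_k \xi_k = I_k \pi_k(b_k)\xi_k$ and then $D I_k = I_k D_k$ on both resulting terms, I expect to obtain the identity
\[
[D, \pi(\phi_j(a))]\, I_k \xi_k \;=\; I_k\,[D_k, \pi_k(\phi_{j,k}(a))]\, \xi_k , \qquad k \geq j.
\]
From here both directions are immediate consequences of $I_k$ being an isometry. If the family $\{[D_k, \pi_k(b_k)]\}_{k \geq j}$ is uniformly bounded by $M$, then for every such $\xi_k$ the identity gives $\|[D,\pi(b)] I_k \xi_k\| = \|[D_k,\pi_k(b_k)]\xi_k\| \leq M\|\xi_k\| = M\|I_k\xi_k\|$; since $\bigcup_{k \geq j} I_k(\text{dom}(D_k))$ is a core for $D$ and is dense in $H$, the densely defined operator $[D,\pi(b)]$ is bounded by $M$ on a dense set and hence extends boundedly. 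Conversely, if $[D,\pi(b)]$ is bounded by $N$, the same identity yields $\|[D_k,\pi_k(b_k)]\xi_k\| = \|[D,\pi(b)] I_k\xi_k\| \leq N\|\xi_k\|$ for all $k \geq j$, so the already-bounded operators $[D_k,\pi_k(b_k)]$ have norm at most $N$ uniformly.

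The step requiring the most care, and the one I would treat as the main obstacle, is the domain bookkeeping rather than the algebra. I must make sure the identity is applied only to vectors $\xi_k$ in the dense subspace $\{\xi_k \in \text{dom}(D_k) : \pi_k(b_k)\xi_k \in \text{dom}(D_k)\}$ on which all four products $D_k\pi_k(b_k)\xi_k$, $\pi_k(b_k)D_k\xi_k$, $D\pi(b)I_k\xi_k$, $\pi(b)DI_k\xi_k$ are simultaneously well defined, and then verify that $\bigcup_{k\geq j} I_k$ of these subspaces still forms a core for $D$ and remains dense in $H$ (density follows since $\text{dom}(D_k)$ is dense in $H_k$, each $I_k$ is isometric, and the images $I_k(H_k)$ exhaust $H$ along the cofinal set $k \geq j$). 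I would also need to confirm that the limit covariance relation $I_k \pi_k(c) = \pi(\phi_k(c)) I_k$ genuinely holds for the inductive realization, tracing it back through the compatibility $I_k I_{j,k} = I_j$ and condition (2) of the morphisms; once these structural facts are pinned down, the boundedness equivalence is forced by the isometry property of the $I_k$.
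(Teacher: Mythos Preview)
Your argument is correct and is precisely the natural one: the single intertwining identity
\[
[D,\pi(\phi_j(a))]\,I_k\xi_k \;=\; I_k\,[D_k,\pi_k(\phi_{j,k}(a))]\,\xi_k,\qquad k\geq j,
\]
reduces both implications to the isometry of $I_k$ together with the density of $\bigcup_{k\geq j} I_k(\text{dom}(D_k))$ in $H$. Note, however, that the present paper does not supply its own proof of this statement: Theorem~\ref{mainthm2-FG} is quoted from Floricel and Ghorbanpour \cite[Theorem~3.2]{FlGh}, so there is no in-paper argument to compare against. Your write-up in fact reconstructs the Floricel--Ghorbanpour proof essentially verbatim, including the care with domains; the only cosmetic difference is that they phrase the converse direction by applying $I_k^*$ to the bounded extension of $[D,\pi(\phi_j(a))]$ rather than reading the norm inequality directly off the identity, which amounts to the same thing.
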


\subsection{Noncommutative Solenoids as Twisted Group $C^*$-Algebras and Inductive Limits }
\label{sec:twistgpalgs}

Throughout this paper, all discrete group we consider will be countable.
We have mentioned earlier that group $C^*$-algebras of countable discrete groups equipped with length functions have been studied as noncommutative spaces by Connes, Rieffel, and others \cite{Con89, Rieffel3, Christ-Rieffel}.   Connes used certain  length functions on finitely generated groups to define metrics on the state space of the group $C^*$-algebras.  Building on the work of Connes, Rieffel investigated when this same construction yields a metric that agrees with the weak$^*$-topology of the state space. Rieffel also investigated this construction for twisted group $C^*$-algebras.  In order to continue these studies, we first review the construction of twisted group $C^*$-algebras, and then concentrate on the specific examples given by  noncommutative solenoids.

Let $\sigma$ be any multiplier of the countable discrete  group $\Gamma$,   i.e., a $2$-cocycle on $\Gamma$  taking values in $\mathbb T$.  For any $f_1, f_2 \in \ell^1(\Gamma)$, the twisted convolution $*_{\sigma}$ is given by

$$ f_1 *_{\sigma} f_2 : \gamma \in \Gamma \mapsto \sum_{\gamma_1 \in \Gamma} \, f_1(\gamma_1)f_2((\gamma_1)^{-1}\gamma )\, \sigma(\gamma_1, (\gamma_1)^{-1}\gamma  ),$$

$\newline$
and the adjoint operation by

$$ f_1^* : \gamma \in \Gamma \mapsto \overline{f_1( \gamma^{-1})\, \sigma(\gamma,  \gamma^{-1})}.$$  

\begin{defn}
\label{def:twistedgroupC*alg}
Given a discrete group $\Gamma$ and a multiplier $\sigma$ on $\Gamma,$ we define the {\bf left-$\sigma$ regular representation} $\lambda_{\sigma}$ of the group algebra 
 $\ell^1(\Gamma,\sigma)$ on $\ell^2(\Gamma)$ by, for all $f\;\in\;\ell^1(\Gamma,\sigma)$, $g\;\in\;\ell^2(\Gamma),$ $\gamma_1,\;\gamma\;\in\;\Gamma$:
$$\Big(\lambda_{\sigma}(f)(g)\Big)(\gamma)\;=\;\sum_{\gamma_1\in\Gamma}\sigma(\gamma_1,\gamma_1^{-1}\gamma)f(\gamma_1)g(\gamma_1^{-1}\gamma).$$
By restricting $\lambda_{\sigma}$ to the delta functions $\{\delta_{\gamma}:\gamma\in \Gamma\},$ we obtain the left-$\sigma$ regular representation for the group $\Gamma.$
The $C^*$-completion of $\ell^1(\Gamma,\sigma)$ in the norm given by the representation $\lambda_{\sigma}$ is called  {\bf the reduced twisted group $C^*$-algebra associated to $\sigma$} and denoted by 
$C^*_r\big(\Gamma, \sigma \big).$  The {\bf full twisted group algebra associated to $\sigma$}  is defined to be the $C^*$-enveloping algebra of the Banach algebra $\ell^1(\Gamma,\sigma)$ given the standard $\ell^1$--norm, and is denoted by $C^*\big(\Gamma, \sigma \big).$ It follows from the definition that $C^*\big(\Gamma, \sigma \big)$ is the universal $C^*$-algebra generated by all projective representations of $\Gamma$ with multiplier $\sigma:$ that is, by all families of unitaries $(W_{\gamma})_{\gamma \in \Gamma}$ such that for any $\gamma_1, \gamma_2 \in \Gamma$, $W_{\gamma_1}W_{\gamma_2} = \sigma(\gamma_1, \gamma_2)W_{\gamma_1 \gamma_2}.$   In the general case, $C^*_r\big(\Gamma, \sigma \big)$ is a quotient of $C^*\big(\Gamma, \sigma \big),$ but if $\Gamma$ is amenable, we have 
$C^*\big(\Gamma, \sigma \big)\;\cong\;C^*_r\big(\Gamma, \sigma \big)$ \cite{ZellerMeier}. When $\sigma=1$ (untwisted case), $C^*\big(\Gamma \big):= \;C^*\big(\Gamma, 1 \big)$ and $C^*_r\big(\Gamma \big):= \;C^*_r\big(\Gamma, 1 \big)$. We will also denote by 
\[
\label{eq:def-of-CC}C_C(\Gamma, \sigma)\]
the (dense) $\ast$-subalgebra of  $\ell^1(\Gamma, \sigma)$ of complex-valued functions of finite support on $\Gamma$.
\end{defn}

We will now describe some examples of multipliers we will use throughout the paper. First,  fix an arbitrary prime $p,$ let $\mathbb{Z}\Big[\frac{1}{p}\Big]$ denote the subgroup of rational numbers of the form- $ \mathbb{Z}\Big[\frac{1}{p} \Big] = \Big\{\frac{a}{p^k} \in \mathbb{Q} : a \in \mathbb{Z}, k \in \mathbb{N} \Big\}$ (endowed with the discrete topology).

\begin{example}
\begin{enumerate}
\item Fix  $\theta \in [0,1],$ define $\sigma_\theta$ on $\Z^2$ by:
$$ \sigma_{\theta} : \Z^2 \times \Z^2 \ni \Big( \begin{pmatrix} z_1 \\ z_2\end{pmatrix}, \begin{pmatrix} y_1 \\ y_2\end{pmatrix} \Big) \, \mapsto \, \text{exp}(\pi i \theta (z_1y_2-z_2 y_1)),$$
\item Let, for $p$ a fixed prime: 
\begin{equation}\label{eq:def-Omega}\Omega_p\;:=\;\{\theta=(\theta_n)\;\in\; \Pi_{n=0}^{\infty}[0,1)_n:\;\forall n\in \mathbb N  ,\;p\theta_{n}=\theta_{n-1}\;\text{mod}\;\mathbb Z\}.
\end{equation} 
Fix $\theta\in \Omega_p $,  and 
define a multiplier $ \sigma_{\theta}$ on $\Gamma=\mathbb{Z}\Big[\frac{1}{p}\Big]\times\mathbb{Z}\Big[\frac{1}{p}\Big]$ by:
\begin{equation}
\label{eq:bicharacter}
\sigma_{\theta} :  \Big( \mathbb{Z}\Big[\frac{1}{p}\Big]\times\mathbb{Z}\Big[\frac{1}{p}\Big]\Big)^2 \ni  \Big( \Big( \frac{q_1}{p^{k_1}}, \frac{q_2}{p^{k_2}}  \Big),  \Big( \frac{q_3}{p^{k_3}}, \frac{q_4}{p^{k_4}}  \Big) \Big) \mapsto \text{exp}(2 \pi i \theta_{k_1 + k_4}q_1q_4).
\end{equation} 
\end{enumerate}
\end{example}

We recall the following; the second definition is adapted from Definition 3.1 of \cite{LaPa}.

\begin{defn}
\label{def:ncsolenoid}
For $\theta \in [0,1],$ define $C^*(\Z^2, \sigma_{\theta})$ to be the {\bf  noncommutative two-torus of angle $\theta$;} this $C^*$-algebra is also called the rotation algebra $A_\theta$.   
That is, $C^*(\Z^2, \sigma_{\theta})\cong A_\theta$  is
the universal $C^*$-algebra generated by unitaries $U_{\theta}$ and $V_{\theta}$ such that
\begin{equation}
\label{eq:irratrotgens}
U_{\theta}V_{\theta} = e^{2 \pi i \theta} V_{\theta}U_{\theta}.
\end{equation} 
For $\theta \in \Omega_p$, the twisted group $C^*$-algebra $C^\ast\Big(\Gamma,\sigma_\theta\Big),$ is called the {\bf  noncommutative $\theta$-solenoid} and is denoted by $\mathcal{A}_\theta^{\mathcal S}$.
\end{defn}

We now recall from Theorem 3.6 of  \cite{LaPa} how noncommutative solenoids can be viewed as direct limits of noncommutative rotation algebras.
\begin{thm} \cite{LaPa}
	\label{thm:DLTorus}
	Let $p\in\mathbb N$ be prime, and fix $\theta\in\Omega_p$. For all $n\in\N$, let $\varphi_n$ be the unique *-morphism from $A_{\theta_{2n}}$ into $A_{\theta_{2n+2}}$ extending:
	\begin{equation*}
	 \varphi_n  \left\{
	\begin{array}{lcr}
	U_{\theta_{2n}} &\longmapsto& U_{\theta_{2n+2}}^p\\
	V_{\theta_{2n}} &\longmapsto& V_{\theta_{2n+2}}^p\\
	\end{array}.
	\right.
	\end{equation*}
	Then the direct limit $C^*$-algebra obtained from the above embeddings:
\[\lim_{\substack{\longrightarrow \\ n \in \N}}A_{\theta_{2n}}:\;\;
 A_{\theta_0}\; \stackrel{\varphi_0}{\longrightarrow}\; A_{\theta_2}\;\stackrel{\varphi_1}{\longrightarrow}\;A_{\theta_4} \;\stackrel{\varphi_2}{\longrightarrow}\; \cdots\; A_{\theta_{2n}}\;\stackrel{\varphi_n}{\longrightarrow}\;A_{\theta_{2n+2}}\cdots\;
\]
is $\ast$-isomorphic to  $\mathcal{A}_\theta^{\mathcal S}$  (of Definition \ref{def:ncsolenoid}). 
\end{thm}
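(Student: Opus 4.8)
I would realize the isomorphism as an inductive limit of a compatible family of embeddings $\psi_n\colon A_{\theta_{2n}}\to\mathcal{A}_\theta^{\mathcal S}$ and then show the induced map $\psi\colon\varinjlim_n A_{\theta_{2n}}\to\mathcal{A}_\theta^{\mathcal S}$ is a bijection. First I would check the connecting maps $\varphi_n$ are well defined: by the universal property of $A_{\theta_{2n}}$ it suffices that $U_{\theta_{2n+2}}^p$ and $V_{\theta_{2n+2}}^p$ are unitaries satisfying the defining relation of $A_{\theta_{2n}}$. A direct commutation count from $UV=e^{2\pi i\theta_{2n+2}}VU$ gives $U^pV^p=e^{2\pi i p^2\theta_{2n+2}}V^pU^p$, and applying the condition $p\theta_m\equiv\theta_{m-1}\pmod{\Z}$ of $\Omega_p$ (see \eqref{eq:def-Omega}) twice yields $p^2\theta_{2n+2}\equiv\theta_{2n}\pmod{\Z}$; hence the image satisfies exactly the relation of $A_{\theta_{2n}}$. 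Then, inside $\mathcal{A}_\theta^{\mathcal S}=C^\ast(\Gamma,\sigma_\theta)$, I set $U_n:=W_{(1/p^n,0)}$ and $V_n:=W_{(0,1/p^n)}$ and compute from the bicharacter \eqref{eq:bicharacter} that $U_nV_n=e^{2\pi i\theta_{2n}}V_nU_n$; by universality this defines $\psi_n$ with $U_{\theta_{2n}}\mapsto U_n$ and $V_{\theta_{2n}}\mapsto V_n$. (Conceptually, the $n$-th copy of $\Z^2$ embeds in $\Gamma$ via $(a,b)\mapsto(a/p^n,b/p^n)$, and the restricted cocycle $\exp(2\pi i\theta_{2n}\,ad)$ is cohomologous to $\sigma_{\theta_{2n}}$, so $\psi_n$ identifies $A_{\theta_{2n}}$ with the corresponding subalgebra.)

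\textbf{Compatibility and the induced map.} Next I would verify $\psi_{n+1}\circ\varphi_n=\psi_n$. Since $\sigma_\theta$ is trivial on pairs supported on a single axis, the cocycle scalars drop out and $W_{(1/p^{n+1},0)}^p=W_{(1/p^n,0)}$, $W_{(0,1/p^{n+1})}^p=W_{(0,1/p^n)}$; as $\varphi_n,\psi_n,\psi_{n+1}$ are $\ast$-homomorphisms agreeing on generators, compatibility follows. The universal property of the direct limit then yields a unital $\ast$-homomorphism $\psi\colon\varinjlim_n A_{\theta_{2n}}\to\mathcal{A}_\theta^{\mathcal S}$ with $\psi\circ\iota_n=\psi_n$.

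\textbf{Surjectivity (routine).} The image of $\psi$ is a closed $\ast$-subalgebra containing every $W_{(1/p^n,0)}$ and $W_{(0,1/p^n)}$. Using again that $\sigma_\theta$ vanishes on each axis, $W_{(q_1/p^{k_1},0)}=W_{(1/p^{k_1},0)}^{q_1}$ and $W_{(0,q_2/p^{k_2})}=W_{(0,1/p^{k_2})}^{q_2}$ lie in the image, and their product is a unimodular multiple of $W_\gamma$ for arbitrary $\gamma\in\Gamma$. Hence the dense subalgebra $C_C(\Gamma,\sigma_\theta)=\operatorname{span}\{W_\gamma\}$ lies in the (closed) image, so $\psi$ is onto.

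\textbf{Injectivity (the main obstacle), via faithful traces.} The crux is injectivity, which I would obtain from the canonical traces. Each $A_{\theta_{2n}}$ carries its faithful canonical trace $\tau_n$ (vanishing off the identity), and $\tau_{n+1}\circ\varphi_n=\tau_n$ because $\varphi_n$ sends the unitary basis $\{W_\gamma:\gamma\in\Z^2\}$ to scalar multiples of $\{W_{p\gamma}\}$ and $p\gamma=0\iff\gamma=0$. These assemble into a tracial state $\tau$ on $\varinjlim_n A_{\theta_{2n}}$. The key point is that $\tau$ is faithful: for $a\in A_{\theta_{2n}}$ one has $\|\pi_\tau(a)\xi_\tau\|^2=\tau(a^\ast a)=\tau_n(a^\ast a)$, so the GNS representation $\pi_\tau$ is injective on each $A_{\theta_{2n}}$; an injective $\ast$-homomorphism of $C^\ast$-algebras is isometric, hence $\pi_\tau$ is isometric on the dense union, hence faithful on the whole limit. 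On the other side, since $\Gamma$ is abelian and thus amenable, $\mathcal{A}_\theta^{\mathcal S}\cong C^\ast_r(\Gamma,\sigma_\theta)$ carries the faithful canonical trace $\tau'$, and a computation on the spanning monomials gives $\tau'\circ\psi=\tau$. Consequently, $\psi(x)=0$ forces $\tau(x^\ast x)=\tau'(\psi(x^\ast x))=0$, whence $x=0$ by faithfulness of $\tau$. Therefore $\psi$ is an isomorphism. I expect everything except this faithfulness step (equivalently, the fact that injectivity on each finite stage propagates isometrically to the dense union) to be mechanical; the commutation and cocycle computations are straightforward bookkeeping.
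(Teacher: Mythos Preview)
Your proposal is correct and follows essentially the same route as the paper's outline: identify each $A_{\theta_{2n}}$ with the sub-twisted-group-$C^*$-algebra $C^*(\Gamma_n,(\sigma_\theta)_n)$ via the generators $W_{(1/p^n,0)},W_{(0,1/p^n)}$, and use that $\Gamma=\varinjlim\Gamma_n$ forces $C^*(\Gamma,\sigma_\theta)=\varinjlim C^*(\Gamma_n,(\sigma_\theta)_n)$. The only real difference is in how injectivity is handled: the paper simply invokes ``universal properties'' (i.e., the general fact that for an amenable group a subgroup inclusion gives an isometric embedding of twisted group $C^*$-algebras, so the direct limit of groups yields the direct limit of $C^*$-algebras), whereas you supply an explicit argument via the compatible canonical traces and faithfulness of the limit trace. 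Your trace argument is a perfectly good substitute and arguably more self-contained; the paper's route is shorter but leans on a structural fact that itself needs a similar justification.
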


We briefly give the outline of the proof for Theorem \ref{thm:DLTorus}.  By the natural embeddings of subgroups $\frac{1}{p^n} \mathbb{Z} \times \frac{1}{p^n} \mathbb{Z}$ into $\Gamma=\mathbb{Z}\Big[\frac{1}{p}\Big]\times\mathbb{Z}\Big[\frac{1}{p}\Big],$ the noncommutative solenoid $\mathcal{A}_\theta^{\mathcal S}=C^*(\Gamma, \sigma_{\theta})$ can be viewed as a direct limit of the $C^*$-algebras $\{ C^*(\frac{1}{p^n} \mathbb{Z} \times \frac{1}{p^n} \mathbb{Z}, (\sigma_{\theta})_n) \}_{n \in \mathbb{N}},$ where here by $ (\sigma_{\theta})_n,$ we mean the restriction of the multiplier $\sigma_\theta$ from $\Gamma$ to its subgroup $\frac{1}{p^n} \mathbb{Z} \times \frac{1}{p^n} \mathbb{Z}$ for a fixed $n\in\N.$ Then it is not hard to see that for every $n\in \mathbb N,$ we have $C^*(\frac{1}{p^n} \mathbb{Z} \times \frac{1}{p^n} \mathbb{Z},  (\sigma_{\theta})_n)\;\cong\;A_{\theta_{2n}}\;\cong\;C^*(\mathbb Z^2,\;\sigma_{\theta_{2n}}),$ because both of these $C^*$-algebras have two generating unitaries satisfying the fundamental commutation relation given in Equation \eqref{eq:irratrotgens}.  With these identifications, it easily follows that $\mathcal{A}_\theta^{\mathcal S}$ satisfies the universal properties that make it $\ast$-isomorphic to $\displaystyle{\lim_{\substack{\longrightarrow \\ n \in \mathbb N}}} A_{\theta_{2n}}.$ We will make these identifications frequently in upcoming sections.

In \cite{AGI-NC-Sol}, noncommutative solenoids of periodic type, such as the following example, were given the structure of noncommutative manifolds by Aiello, Guido, and Isola.  This was done by constructing a semifinite spectral triple on them. 

\begin{example} [Aiello-Guido-Isola: NC solenoid, periodic rational case]
	\label{example_rational}  
	Let $p=2.$   Consider $\theta= (\theta_n)\in \Omega_2$ given by:
	$$\theta=\;\big(\frac{2}{3},\frac{1}{3},\frac{2}{3}, \cdot \cdot \cdot \cdot \cdot \cdot \cdot\big).
	$$
	   By Theorem \ref{thm:DLTorus} $\mathcal{A}_{\theta}^{\mathcal S}$  is the direct limit of the single rotation algebra $A_{\frac{2}{3}}.$
\end{example}

 In Theorem 2.9 of \cite{LaPa} it is shown that for $\theta\in\Omega_p,\;\mathcal{A}_{\theta}^{\mathcal S}$ is simple if and only if given any distinct $j, k \in \mathbb{N}$, we have $\theta_j \neq \theta_k$.  In particular, the construction given by Aiello, Guido, and Isola in \cite{AGI-NC-Sol} cannot be applied to simple noncommutative solenoids like the one below.

\begin{example}  \label{example_rational non stable}  
	Let $a$ and $q$ be relatively prime integers and $p$ a prime. Consider
	\[\theta_0= a / q,\ \theta_1= a /(p  q),\ \theta_2= a /(p^2  q),\ldots, \theta_{2n}= a /(p^{2n}  q),\ldots. \]
	The resulting noncommutative solenoid $\mathcal{A}_\theta^{\mathcal S}$ is $\ast$-isomorphic to the direct limit of non-isomorphic rotation algebras corresponding to rational rotations.  
\end{example}
$\newline$
In Section 3 we will  define spectral triples on all noncommutative solenoids.

$\newline$

\section{Spectral Triples for Noncommutative Solenoids, and Their Quantum Compact  Metric Space Structure}
\label{sec:sptrNCS}

We now come to one of the  main results of the paper,  the construction of  spectral triples on noncommutative solenoids. (As a particular case, these results also hold for commutative solenoids.) Their associated metric structures in turn will give a new way of viewing noncommutative solenoids as  quantum compact metric spaces.

\subsection{Length Functions on $\Z[\frac1p] \times \Z[\frac1p]$ with Bounded Doubling}
	\label{subsec:lengthfunctionsonZp}
In this section we  define a length function on $ \Z[\frac1p]$ and $\Z[\frac1p] \times \Z[\frac1p]$. This will be used  later in our construction of spectral triples on noncommutative solenoids.

We recall for the reader's convenience the definition of {\it length function} on a general discrete  group.
\begin{defn}
	[Connes \cite{Con89}, Christ-Rieffel \cite{Christ-Rieffel}, Long-Wu \cite{Long-Wu2}]
	\label{def:lengthfcn}
	A \textbf{length function} on a discrete  group $\Gamma$ is a function $\mathbb{L}: \Gamma \to [0, \infty)$ such that
	
	(1) $\mathbb{L}(\gamma) = 0$ if and only if $\gamma = e$, where $e$ is the identity of $\Gamma$,
	
	(2) $\mathbb{L}(\gamma) = \mathbb{L}(\gamma^{-1})$ for all $\gamma \in \Gamma$,
	
	(3) $\mathbb{L}(\gamma_1 \gamma_2) \leq \mathbb{L}(\gamma_1)  +\mathbb{L}(\gamma_2)$ for all $\gamma_1, \gamma_2 \in \Gamma$.
\end{defn}

Given a pair consisting of a discrete group $\Gamma$ and a length function $\mathbb{L}$ on it, 
Connes and Rieffel associate  to $(\Gamma, \mathbb{L})$ a \lq \lq Dirac operator" by defining $D_{\mathbb{L}} : C_C(\Gamma) \rightarrow C_C(\Gamma)$ given by, for all $f \in C_C(\Gamma), \gamma \in \Gamma$:

$$D_{\mathbb L}(f)(\gamma)\;=\;\mathbb L(\gamma)f(\gamma).$$  As mentioned in the introduction, for many examples of  $(\Gamma,\mathbb L)$ (mostly with finitely generated $\Gamma$), given a multiplier $\sigma$ on $\Gamma,$ and corresponding  left-$\sigma$ regular representation $\lambda_{\sigma}$ of $C^*(\Gamma,\sigma)$ on $\ell^2(\Gamma),$  $(C^*(\Gamma,\sigma), \ell^2(\Gamma),D_{\mathbb L} )$ has been shown to be a spectral triple with smooth subalgebra  $C_C(\Gamma,\sigma).$

\begin{defn}
	\label{def:lengthfcnbddoubl}
A length function $\mathbb{L}$ on a countable discrete group $\Gamma$ 
(as in Definition \ref{def:lengthfcn}) is in addition: 
\begin{enumerate}
    \item [(1)] \textbf{proper} if $B_{\mathbb{L}}(R) := \{\gamma \in \Gamma : \mathbb{L}(\gamma) \leq R \}$ is a finite subset of $\Gamma$ for each $0\leq R < \infty;$
    
   \item [(2)] has (or is of) \textbf{bounded doubling} if $\mathbb{L}$ is proper and there exists a constant $C_{\mathbb{L}} < \infty$ such that 
    
    $| B_{\mathbb{L}}(2R) | \leq C_{\mathbb{L}} | B(R)|$ for each $R,\;1\;\leq\;R\;<\infty.$
\end{enumerate}
\end{defn}

We introduce for future reference the concept of  bounded ${\bf t}$-dilation for length functions on discrete groups, first introduced by B. Long and W. Wu \cite{Long-Wu2}.

\begin{defn} (c.f. \cite{Long-Wu2}, Definition 5.1)
	\label{def:bdtdilation}
Let  $\mathbb{L}$ be a length function on the discrete group  $\Gamma.$  Then $\Gamma$ is said to have the \textbf{property of bounded ${\bf t}$-dilation} with respect to $\mathbb{L}$ for a fixed ${\bf t}>1$ if $\mathbb{L}$ is proper and there exists $K_{\mathbb{L}} < \infty$ such that $| B_{\mathbb{L}}({\bf t}R) | \leq K_{\mathbb{L}}\ | B_{\mathbb{L}}(R)|$ for each $R \geq 1$.
\end{defn}

  A simple argument shows that $\Gamma$  has the property of bounded ${\bf t}$-dilation with respect to $\mathbb{L}$ for some ${\bf t}>1$ if and only if $\mathbb{L}$ has bounded doubling.  

Suppose $\mathbb{L}$ is a length function on a discrete group $\Gamma$.  Let $M_{\mathbb{L}}$ denote the (usually unbounded) operator on $\ell^2(\Gamma)$ of pointwise multiplication by $\mathbb{L}$, $\sigma$ a multiplier on $\Gamma$, and the left-$\sigma$ regular representation of $\ell^1(\Gamma,\sigma)$ on $\ell^2(\Gamma)$.  In $\cite{Long-Wu2}$, Long and Wu show that given any $\gamma \in \Gamma$,
$$\Vert \, [D_{\mathbb{L}}, \lambda_{\sigma}(\delta_{\gamma})] \, \Vert_{\mathcal{B}(\ell^2(\Gamma))} = \mathbb{L}(\gamma).$$
In particular, when $f \in C_C(\Gamma, \sigma),\, [D_{\mathbb{L}}, \lambda_{\sigma}(f)]$ extends to a bounded operator on $\ell^2(\Gamma);$ we note for later the following key inequality:
\begin{equation}
\label{inequal:normcommutator}
\| \, [D_{\mathbb{L}}, \lambda_{\sigma}(f)] \, \|\;\leq\;\sum_{\gamma\in\Gamma}\mathbb{L}(\gamma)|f(\gamma)|,\;f\;\in\; C_C(\Gamma, \sigma).
\end{equation}

  As $C_C(\Gamma, \sigma)$ is a dense subalgebra of $C^*(\Gamma, \sigma)$, any proper length function in this setting can be used to build a candidate Dirac operator for a spectral triple for $C^*(\Gamma, \sigma)$ that satisfies conditions (ST1) and  (ST2), but to guarantee that this spectral triple is finitely summable, one needs further conditions on $\mathbb{L}$.    In this section, we will define length functions that exhibit the additional specifications allowing the related Dirac operators to give finitely summable spectral triples, as well as satisfy the conditions of Definition \ref{def:spectraltriple}, and thus construct Dirac operators that also equip noncommutative solenoids with a quantum compact  metric space structure.

To obtain a length function on $\mathbb{Z} \Big[\frac{1}{p} \Big]$ that will produce the Dirac operator with the desired properties, we will begin by recalling the  embedding $ \Lambda: \mathbb{Z} \Big[\frac{1}{p} \Big]\to \mathbb{R} \times \mathbb{Q}_p$ (where $\mathbb{Q}_p$ are the $p$-adic rationals) as a lattice.    Also recall that the topology on $\mathbb Q_p$ is induced by the $p$-adic norm on $\mathbb Q$ defined by 
$$\|r\|_p\;= \begin{cases} p^{-n} & \text{ if $r \neq 0 $ and $r=\frac{ap^n}{b},\;a,\;n\in\mathbb Z,\;b\in \mathbb Z\backslash\{0\},\;\text{gcd}(a,p)=\text{gcd}(b,p)=\text{gcd}(a,b)=1,$ }\\ 0 & \text{ if $r = 0$.}
\end{cases}.$$

In particular, $\mathbb{Q}_p$ can be viewed as the completion of $\mathbb Q$ for the $p$-adic metric, which is the metric induced by this norm.  Since addition and multiplication are uniformly continuous for this metric, $\mathbb{Q}_p$ can also be given the structure of a field.  For further details on $p$-adic arithmetic and analysis, see the book by A. Robert \cite{robert}. Thus $\mathbb Z\Big[\frac{1}{p}\Big],$ being a subset of $\mathbb Q,$ embeds naturally into $\mathbb R$ and also into the $p$-adic field $\mathbb{Q}_p.$ To avoid confusion, we let $\iota:\mathbb Z\Big[\frac{1}{p}\Big]\;\to\;\mathbb{Q}_p$ denote the embedding of $\mathbb Z\Big[\frac{1}{p}\Big]$ into  $\mathbb{Q}_p,$ and if $r\in\;\mathbb Z\Big[\frac{1}{p}\Big]\subset \mathbb{R}$, we view it as being an element of $\mathbb R.$ We then define $\Lambda:\mathbb Z\Big[\frac{1}{p}\Big]\;\to\;\mathbb R\times \mathbb{Q}_p$ by 
$$\Lambda(r)\;=\; (r,-\iota(r)) \in \mathbb{R} \times \mathbb{Q}_p,\;r\in\;\mathbb Z\Big[\frac{1}{p}\Big].$$
Following a suggestion of J. Kaminker and J. Spielberg, Latr\'emoli\`ere and Packer showed in \cite{LaPa1} that $\Lambda$ embeds $\mathbb{Z} \Big[\frac{1}{p}\Big]$ as a lattice in $\mathbb R\times \mathbb Q_p$ with, in fact, the Pontryagin dual $\widehat{\mathbb{Z} \Big[\frac{1}{p}\Big]}\;=\;{\mathcal S}_p$ as its quotient. 
We note that $\Lambda$ was used to construct different classes of finitely generated projective modules for the $C^*$-algebras ${\mathcal A}_{\theta}^{\mathcal S}$ in \cite{LaPa1}.   In \cite{Enstad}, U. Enstad used 
the embedding $\Lambda$ to give a new example of a non-trivial bundle motivating a Balian--Low theorem in a novel context.  In \cite{Lu}, S. Lu used $\Lambda$ and some of its generalizations to obtain some new sufficient conditions for two noncommutative solenoids to be strongly Morita equivalent to one another.  This gives yet another example where the diagonal embedding of $\mathbb{Z} \Big[\frac{1}{p}\Big]$ as a lattice in $\mathbb R\times \mathbb Q_p$ is the source of novel tools.

 As a means of obtaining a proper length function on $\mathbb{Z} \Big[\frac{1}{p}\Big] \times \mathbb{Z} \Big[\frac{1}{p}\Big]$ that is of bounded doubling, we will first consider an initial length function defined on  the abelian group $\mathbb{Z} \Big[\frac{1}{p}\Big].$  

\begin{lemma}\label{lemma:baselengthfcn}
 Fix a prime $p$. Consider the function $\mathbb{L}_{p}: \mathbb{Z} \Big[\frac{1}{p}\Big] \to [0, \infty)$ given by 
 $$\mathbb{L}_p(r)\;=\;|r|+\|r\|_p.$$ Then $\mathbb{L}_p$ is a length function on $\mathbb Z\Big[\frac{1}{p}\Big]$ that is both proper and unbounded. 
\end{lemma}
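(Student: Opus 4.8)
The plan is to check the three length-function axioms by direct inspection, then treat properness and unboundedness as separate points, with properness being the only substantive step. Throughout, recall that the group operation on $\mathbb{Z}\big[\frac{1}{p}\big]$ is addition, the identity is $0$, and the inverse of $r$ is $-r$.

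First I would dispatch the axioms, using that $\mathbb{L}_p = |\cdot| + \|\cdot\|_p$ is a sum of two norm-like quantities. For axiom (1), $\mathbb{L}_p(r)=|r|+\|r\|_p=0$ forces $|r|=0$, hence $r=0$, and conversely $\mathbb{L}_p(0)=0$. For axiom (2), both $|\cdot|$ and $\|\cdot\|_p$ are invariant under $r\mapsto -r$ (since $\|-1\|_p=1$), so $\mathbb{L}_p(-r)=\mathbb{L}_p(r)$. For the subadditivity axiom (3) I would combine the ordinary triangle inequality $|r+s|\leq|r|+|s|$ with the ultrametric inequality $\|r+s\|_p\leq\max(\|r\|_p,\|s\|_p)\leq\|r\|_p+\|s\|_p$ and add. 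Unboundedness is then immediate: restricting to $n\in\mathbb{Z}\subset\mathbb{Z}\big[\frac{1}{p}\big]$ gives $\mathbb{L}_p(n)\geq|n|\to\infty$ (equally, $\mathbb{L}_p(p^{-k})=p^{-k}+p^{k}\to\infty$).

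The main obstacle is properness, i.e.\ finiteness of $B_{\mathbb{L}_p}(R)$ for every $R$. The cleanest route exploits the lattice embedding $\Lambda:\mathbb{Z}\big[\frac{1}{p}\big]\to\mathbb{R}\times\mathbb{Q}_p$, $\Lambda(r)=(r,-\iota(r))$, recalled just above the statement. Since $\|-\iota(r)\|_p=\|r\|_p$, one has $\mathbb{L}_p(r)=|r|+\|-\iota(r)\|_p$, so $\mathbb{L}_p$ is exactly the restriction to the lattice of the norm $\|(x,y)\|:=|x|+\|y\|_p$ on $\mathbb{R}\times\mathbb{Q}_p$. The closed ball $\{(x,y):\|(x,y)\|\leq R\}$ is contained in $[-R,R]\times\{y\in\mathbb{Q}_p:\|y\|_p\leq R\}$, a product of a compact interval with a compact (closed) $p$-adic ball, and is therefore relatively compact. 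Because $\Lambda$ embeds $\mathbb{Z}\big[\frac{1}{p}\big]$ as a discrete lattice, the intersection of its image with any compact set is finite, whence $B_{\mathbb{L}_p}(R)$ is finite.

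For readers who prefer to avoid $\Lambda$, I would note the direct alternative: $\mathbb{L}_p(r)\leq R$ forces both $\|r\|_p\leq R$ and $|r|\leq R$. The first bounds the valuation from below, $v_p(r)\geq-\log_p R$, which caps the power of $p$ allowed in the denominator of $r$; for each of the finitely many admissible denominators $p^{k}$, the bound $|r|\leq R$ permits only finitely many numerators, and summing over the finitely many $k$ gives finiteness. I expect to present the lattice argument as the primary one, since it most directly reuses the structure already set up in the excerpt.
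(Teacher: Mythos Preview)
Your proof is correct. The verification of the length-function axioms and of unboundedness is essentially identical to the paper's.

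For properness, however, your route differs from the paper's. You argue conceptually via the lattice embedding $\Lambda$: the ball $B_{\mathbb{L}_p}(R)$ is the preimage of a compact set under the embedding of a discrete subgroup, hence finite. Your direct alternative bounds the denominator via $\|r\|_p\le R$ and then the numerator via $|r|\le R$. Both are valid and arguably cleaner than what the paper does. The paper instead works harder to establish the explicit two-sided set inclusions
\[
\Big\{\tfrac{m}{p^{d-1}}:\,|m|\le p^{2(d-1)}\Big\}\;\subseteq\; B_{\mathbb{L}_p}(p^d)\;\subseteq\;\Big\{\tfrac{m}{p^{d-1}}:\,|m|\le p^{2d}\Big\},
\]
yielding the cardinality bounds $2p^{2(d-1)}+1\le|B_{\mathbb{L}_p}(p^d)|\le 2p^{2d}+1$. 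This extra effort is not needed for the lemma as stated, but it pays off immediately in the next lemma: those quantitative bounds are exactly what drive the bounded $p$-dilation (hence bounded doubling) estimate $|B_{\mathbb{L}_p}(pR)|\le 4p^8|B_{\mathbb{L}_p}(R)|$. Your arguments give finiteness only, so if you adopt either of them you would need to redo the counting separately when you reach the bounded-doubling step.
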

\begin{proof}
	The basic requirements for a length function on a group will first be checked.  Suppose that $r, r' \in \mathbb{Z} \Big[\frac{1}{p}\Big]$.  Since $0\leq |r| \leq \mathbb{L}_p(r)$, $\mathbb{L}_p(r)=0$ if and only if $|r|=0$.  As $| \cdot |$ is a norm on $\mathbb{Z} \Big[\frac{1}{p}\Big]$, $\mathbb{L}_p$ vanishes only on the identity of $\mathbb{Z} \Big[\frac{1}{p}\Big]$. It is easily seen that $\mathbb{L}_p(r)=\mathbb{L}_p(-r),\;\forall r\in \mathbb{Z} \Big[\frac{1}{p}\Big].$ Moreover,  
	$$\mathbb{L}_p(r + r')\;=\;|r+r'|+\|r+r'\|_p\;\leq\;|r|+|r'|+\|r\|_p+\|r'\|_p\;=\mathbb{L}_p(r)+\mathbb{L}_p(r'),$$
	thereby completing the verification.

    To show that $\mathbb{L}_p$ is also proper, fix a choice of $R >0$.  
    We will demonstrate  that 
    $B_{\mathbb{L}_p}(R)$ is a finite set by making some very crude estimates on $|B_{{\mathbb L}_p}(p^d)|$, where $d$ is the least nonnegative integer such that $R < p^d$. 
    To do so  we will first  prove that, for all integers $d\geq 1$:
    \begin{equation}\label{eq:inclusions-set}\{\frac{m}{p^{d-1}}: m\in \mathbb Z,\;-p^{2(d-1)}\;\leq\;m\;\leq\;p^{2(d-1)}\}\;\subseteq B_{\mathbb{L}_p}(p^d)\;\subseteq \;\{\frac{m}{p^{d-1}}: m\in \mathbb Z,\;-p^{2d}\;\leq\;m\;\leq\;p^{2d}\},\end{equation}
    which via a  counting argument readily implies:
    \begin{equation}
    \label{eq:inclusions}
    2p^{2(d-1)}+1\;\leq\;|B_{\mathbb{L}_p}(p^d)|\;\leq\;2p^{2d}+1.
    \end{equation}
    To prove Equation \eqref{eq:inclusions-set}, let
    $x=\frac{m}{p^{2(d-1)}}$, with  $|m|\;\leq\; p^{2(d-1)}.$ Then
    $\mathbb{L}_p(\frac{m}{p^{d-1}})\leq {p^{d-1}}+{p^{d-1}}= 2p^{d-1}\leq p^d,$
    which shows the set inclusion on the left  hand side.
    
    To show the set inclusion on the right  hand side, we will actually use complements, and show that if 
    $x\in  \mathbb{Z} \Big[ \frac{1}{p} \Big]$ is not in $\{\frac{m}{p^{d-1}}: m\in \mathbb Z,\;-p^{2d}\;\leq\;m\;\leq\;p^{2d}\}$, then it is not in $B_{\mathbb{L}_p}(p^d)$.

    Fix any  $x\not\in \{ \frac{m}{p^{d-1}}: m\in \mathbb Z,\;-p^{2d}\;\leq\;m\;\leq\;p^{2d} \}$.  We first assume that we have  $x=\frac{m}{p^{d-1}}$ with $|m|>p^{2d}$ and  $\text{gcd}(m,p)=1.$  In this case, we have $\mathbb{L}_p(\frac{m}{p^{d-1}})=\;\frac{|m|}{p^{d-1}}+p^{d-1} >\frac{p^{2d}}{p^{d-1}}+p^{d-1}> \frac{p^{2d}}{p^d}= p^d.$ For the other case, suppose $x=\frac{m}{p^j}$  with $\text{gcd}(m,p)=1$ and $j\geq  d,$ then $\mathbb{L}_p(\frac{m}{p^j}) > p^j\;\geq p^d,$ so that   $x\not\in B_{{\mathbb L}_p}(p^d).$
     This  completes the proof of Equation \eqref{eq:inclusions-set}.
     
     Now notice that, if $d$ is the least nonnegative integer such that $R < p^d$:
     
     \[
    B_{\mathbb{L}_p}(R) \subseteq   B_{\mathbb{L}_p}(p^d). 
     \]

     An application of Equation \eqref{eq:inclusions} ends the proof that  $\mathbb{L}_p$ is a proper length function on $\mathbb{Z} \Big[\frac{1}{p}\Big]$.
    
    Finally, because $\mathbb{L}_p(r)>|r|,\;\forall r\in \mathbb{Z} \Big[\frac{1}{p}\Big],$ and the absolute value function is unbounded on $\mathbb{Z} \Big[\frac{1}{p}\Big],$ we see that $\mathbb{L}_p$ is unbounded.
    
\end{proof}

\begin{rmk}
\label{rem:cqms}
When a length function $\mathbb{L}$ on a discrete group is also of bounded doubling, Christ and Rieffel showed that $M_{\mathbb{L}}$ induces a quantum compact metric space structure on the group $C^*$-algebra \cite{Christ-Rieffel}.  These results were extended by Long and Wu to the setting of group $C^*$-algebras twisted by a 2-cocycle \cite{Long-Wu2}.  Furthermore, Christ and Rieffel demonstrated that such length functions also exhibit a property called \textit{polynomial growth} \cite{Christ-Rieffel}.  Connes proved that for a finitely generated discrete group $\Gamma$  with length function $\mathbb{L},$  if there exists a positive constant $s$ such that $(\text{Id} + [D_{\mathbb{L}}]^2)^{\frac{-s}{2}}$ is a trace-class operator, then $\mathbb{L}$ will have polynomial growth in which case,  $M_{\mathbb{L}}$ is also the Dirac operator for a spectral triple.  Under these hypotheses, the associated spectral triple is said to be \textit{finitely summable}. Conversely, if $\Gamma$ is a finitely generated discrete group with length function $\mathbb{L}$ such that $\mathbb{L}$ has polynomial growth, then Connes proved that the associated spectral triple is finitely summable \cite{Con89}.  In our case, the groups we consider are not finitely generated. However, as a consequence of the following lemma, our Dirac operators arise from length functions on $\mathbb{Z}\Big[\frac{1}{p}\Big] \times \mathbb{Z}\Big[\frac{1}{p}\Big]$ that can also be shown to be of bounded doubling, and therefore by Proposition 1.2 of \cite{Christ-Rieffel}, the length functions that we construct will have polynomial growth.   Moreover, we shall show in Theorem \ref{thm:ncsolenoidspectraltriple} that bounded doubling of the length function, even in the non-finitely generated case, implies that the associated spectral triple is finitely summable.
\end{rmk}

\begin{lemma}\label{lemma:boundeddoubling}
	For a fixed prime $p$, $\mathbb Z\Big[\frac{1}{p}\Big]$ has the property of bounded $p$-dilation with respect to $\mathbb{L}_p$.  In particular, $\mathbb{L}_p$ is of bounded doubling with $C_{\mathbb{L}_p} = 4 p^8$.
\end{lemma}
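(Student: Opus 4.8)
The plan is to lean on the two-sided cardinality estimate \eqref{eq:inclusions} already proved in Lemma \ref{lemma:baselengthfcn}, which controls $|B_{\mathbb{L}_p}(p^d)|$ up to a factor depending only on $p$, and then to interpolate between consecutive powers of $p$ using the fact that $R \mapsto B_{\mathbb{L}_p}(R)$ is nondecreasing. First I would fix $R \geq 1$ and let $d \geq 1$ be the least integer with $R < p^d$, so that $p^{d-1} \leq R < p^d$ and hence $p^d \leq pR < p^{d+1}$. Monotonicity then gives the sandwich $B_{\mathbb{L}_p}(p^{d-1}) \subseteq B_{\mathbb{L}_p}(R)$ and $B_{\mathbb{L}_p}(pR) \subseteq B_{\mathbb{L}_p}(p^{d+1})$, converting the comparison between $R$ and $pR$ into a comparison between pure powers of $p$ to which \eqref{eq:inclusions} applies.

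Next I would apply \eqref{eq:inclusions} on both ends. The upper inclusion yields $|B_{\mathbb{L}_p}(pR)| \leq |B_{\mathbb{L}_p}(p^{d+1})| \leq 2p^{2(d+1)}+1$, and, for $d \geq 2$, the lower inclusion (with $d$ replaced by $d-1$) yields $|B_{\mathbb{L}_p}(R)| \geq |B_{\mathbb{L}_p}(p^{d-1})| \geq 2p^{2(d-2)}+1$. Forming the ratio, the dominant powers $p^{2d}$ cancel, leaving a quantity bounded by a constant multiple of $p^{6}$, which sits comfortably below the crude target $4p^8$; I would keep the estimate deliberately wasteful rather than optimize, since a uniform finite constant is all that is needed.

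The one delicate point, and the main obstacle, is the boundary case $d = 1$ (equivalently $1 \leq R < p$), where $p^{d-1} = p^{0} = 1$ and the lower estimate in \eqref{eq:inclusions} degenerates. Here I would instead use the trivial bound $|B_{\mathbb{L}_p}(R)| \geq |B_{\mathbb{L}_p}(1)| \geq 1$ (the ball always contains the identity $0$), together with $|B_{\mathbb{L}_p}(pR)| \leq |B_{\mathbb{L}_p}(p^{2})| \leq 2p^{4}+1$, giving a ratio at most $2p^4+1 \leq 4p^8$. Combining the two cases shows that $|B_{\mathbb{L}_p}(pR)| \leq 4p^8\,|B_{\mathbb{L}_p}(R)|$ for every $R \geq 1$, which is precisely bounded $p$-dilation in the sense of Definition \ref{def:bdtdilation} with $K_{\mathbb{L}_p} = 4p^8$.

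Finally, for the ``in particular'' clause I would note that since $p \geq 2$ we have $2R \leq pR$ for all $R$, so that $B_{\mathbb{L}_p}(2R) \subseteq B_{\mathbb{L}_p}(pR)$ and therefore $|B_{\mathbb{L}_p}(2R)| \leq |B_{\mathbb{L}_p}(pR)| \leq 4p^8\,|B_{\mathbb{L}_p}(R)|$. This is exactly the bounded doubling condition of Definition \ref{def:lengthfcnbddoubl} with $C_{\mathbb{L}_p} = 4p^8$, completing the proof.
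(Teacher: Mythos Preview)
Your proposal is correct and follows essentially the same approach as the paper: you sandwich $R$ between consecutive powers of $p$, invoke the two-sided cardinality bound \eqref{eq:inclusions} at those endpoints, treat the bottom case separately via $|B_{\mathbb{L}_p}(1)|\geq 1$, and deduce bounded doubling from bounded $p$-dilation using $2\leq p$. The only cosmetic difference is that the paper first isolates the one-step estimate $|B_{\mathbb{L}_p}(p^{d+1})|\leq 2p^4\,|B_{\mathbb{L}_p}(p^{d})|$ and then iterates it twice to reach $4p^8$, whereas you take the raw ratio $(2p^{2(d+1)}+1)/(2p^{2(d-2)}+1)$ directly; both routes yield the same constant.
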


\begin{proof}
  As a consequence of the proof of  Lemma \ref{lemma:baselengthfcn}, particularly Equation \eqref{eq:inclusions}, we have  
\begin{align}|B_{\mathbb{L}_p}(p^{d+1})|&\leq |B_{\mathbb{L}_p}(p^{d+1})| +|B_{\mathbb{L}_p}(p^d)|\;\leq\;(2p^{2(d+1)}+1) +(2p^{2d}+1) < 2p^{2(d+1)} + 2p^{2d} + 2 p^4 \label{eq:ineq}\notag \\ 
&= p^4(2p^{2(d-1)}) + p^2(2p^{2(d-1)}) + 2p^4 < 2p^4(2p^{2(d-1)}+1) \leq 2p^4(|B_{\mathbb{L}_p}(p^d)|).
\end{align}
When $d$ is zero, note that $\mathbb{L}_p(0) = 1$ implies $|B_{\mathbb{L}_p}(1)| \geq 1$, which yields
$$|B_{\mathbb{L}_p}(p)|\leq 2p^2+1 < 2 p^4 \leq 2p^4|B_{\mathbb{L}_p}(1)|.$$
In particular, given $R\geq 1,$ there exists $d\in\mathbb N$ such that $p^d\leq R<p^{d+1},$ and so 
\begin{equation}
\label{eq:ballcontainment}
B_{\mathbb{L}_p}(p^{d})\;\subseteq\; B_{\mathbb{L}_p}(R)\subseteq\; B_{\mathbb{L}_p}(pR)\;\subseteq\;B_{\mathbb{L}_p}(p^{d+2}),
\end{equation}
which yields, together with Equation \eqref{eq:ineq}:
\begin{equation}
\label{eq:anotherkeydoublingineq}
|B_{\mathbb{L}_p}(pR)|\;\leq\;|B_{\mathbb{L}_p}(p^{d+2})|\;\leq\;4p^8|B_{\mathbb{L}_p}(p^d)|\;\leq\;4p^8|B_{\mathbb{L}_p}(R)|.   \end{equation}
Since the same reasoning holds for each $R \geq 1$, $\mathbb Z\Big[\frac{1}{p}\Big]$ has the property of bounded $p$-dilation with respect to $\mathbb{L}_p$.  The conclusion that $\mathbb{L}_p$ is also of bounded doubling follows by our remark made after Definition \ref{def:bdtdilation}.
\end{proof}	

Latr\'emoli\`ere and Packer used the map $\Lambda$ to build an embedding of $\mathbb{Z}\Big[\frac{1}{p}\Big]\times \mathbb{Z} \Big[\frac{1}{p}\Big]$ as a lattice in $[\mathbb{R} \times \mathbb{Q}_p] \times [\mathbb{R} \times \mathbb{Q}_p]$ \cite{LaPa1}.  As a consequence, $\mathbb{L}_p$ can be used to build a length function on $\mathbb{Z}\Big[\frac{1}{p}\Big]\times \mathbb{Z} \Big[\frac{1}{p}\Big]$ that is of bounded doubling.  More generally,

\begin{prop}\label{prop:lengthfcn sum}
    Let $\Gamma_1$ and $\Gamma_2$ be countable discrete  groups and $\Gamma=\Gamma_1 \times \Gamma_2$.  Given proper length functions $\mathbb{L}^{(i)}$ on $\Gamma_i$ for $i = \{ 1, 2 \}$, define $\mathbb{L}^{\Sigma}:\Gamma\to [0,\infty)$ by, for all   $\;x_1 \in \Gamma_1,\ x_2\ \in \Gamma_2$:
	\begin{equation}\label{eq:length func}
	\mathbb{L}^{\Sigma}((x_1,x_2))\; :=\;\mathbb{L}^{(1)}(x_1)+\mathbb{L}^{(2)}(x_2).   
	\end{equation}

	Then $\mathbb{L}^{\Sigma}$ is a proper length function on $\Gamma.$
If $\mathbb{L}^{(i)}$ has the bounded doubling property with $C_{\mathbb{L}^{(i)}}$ for $i \in \{1, 2 \},$   then $\mathbb{L}^{\Sigma}$ is of bounded doubling with $C_{\mathbb{L}^{\Sigma}} = C^4,$ where $C = \max \{ C_{\mathbb{L}^{(1)}}, C_{\mathbb{L}^{(2)}} \}.$
\end{prop}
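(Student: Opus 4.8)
The plan is to verify in turn that $\mathbb{L}^{\Sigma}$ is a length function, that it is proper, and finally that it inherits bounded doubling with the claimed constant. The first of these is routine and I would dispatch it quickly: since $\mathbb{L}^{(1)},\mathbb{L}^{(2)}\geq 0$, the equality $\mathbb{L}^{\Sigma}((x_1,x_2))=0$ forces $\mathbb{L}^{(1)}(x_1)=\mathbb{L}^{(2)}(x_2)=0$, hence $x_1=e_1$ and $x_2=e_2$; symmetry follows coordinatewise from $(x_1,x_2)^{-1}=(x_1^{-1},x_2^{-1})$ and the symmetry of each $\mathbb{L}^{(i)}$; and subadditivity is just the sum of the two subadditivity inequalities. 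For properness, the key observation is the containment $B_{\mathbb{L}^{\Sigma}}(R)\subseteq B_{\mathbb{L}^{(1)}}(R)\times B_{\mathbb{L}^{(2)}}(R)$, valid because each summand is nonnegative; as each factor on the right is finite by properness of $\mathbb{L}^{(i)}$, the set $B_{\mathbb{L}^{\Sigma}}(R)$ is finite.

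The heart of the proof is the doubling estimate, and here the crucial geometric input is the two-sided sandwich
\[
B_{\mathbb{L}^{(1)}}(R/2)\times B_{\mathbb{L}^{(2)}}(R/2)\ \subseteq\ B_{\mathbb{L}^{\Sigma}}(R)\ \subseteq\ B_{\mathbb{L}^{(1)}}(R)\times B_{\mathbb{L}^{(2)}}(R),
\]
both inclusions being immediate from $\mathbb{L}^{\Sigma}=\mathbb{L}^{(1)}+\mathbb{L}^{(2)}$ together with nonnegativity (the left one because $\mathbb{L}^{(1)}(x_1),\mathbb{L}^{(2)}(x_2)\leq R/2$ gives $\mathbb{L}^{\Sigma}\leq R$; the right one because $\mathbb{L}^{\Sigma}\leq R$ forces each coordinate length $\leq R$). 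Taking cardinalities and chaining the two inclusions (the upper one at radius $2R$, the lower one at radius $R$) gives
\[
|B_{\mathbb{L}^{\Sigma}}(2R)|\ \leq\ |B_{\mathbb{L}^{(1)}}(2R)|\,|B_{\mathbb{L}^{(2)}}(2R)|
\quad\text{and}\quad
|B_{\mathbb{L}^{(1)}}(R/2)|\,|B_{\mathbb{L}^{(2)}}(R/2)|\ \leq\ |B_{\mathbb{L}^{\Sigma}}(R)|.
\]
Writing $2R = 4\cdot(R/2)$ and applying the bounded doubling hypothesis of each $\mathbb{L}^{(i)}$ twice, I get $|B_{\mathbb{L}^{(i)}}(2R)|\leq C_{\mathbb{L}^{(i)}}^2\,|B_{\mathbb{L}^{(i)}}(R/2)|\leq C^2\,|B_{\mathbb{L}^{(i)}}(R/2)|$, so that multiplying over $i\in\{1,2\}$ and inserting the two displayed bounds yields $|B_{\mathbb{L}^{\Sigma}}(2R)|\leq C^4\,|B_{\mathbb{L}^{\Sigma}}(R)|$, as desired.

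I expect the main obstacle to be precisely this rescaling mismatch: because $\mathbb{L}^{\Sigma}$ is a sum, its ball of radius $R$ is squeezed between the product balls of radii $R/2$ and $R$, so converting from one to the other costs a factor-of-two dilation in each coordinate, and it is this extra dilation that forces the doubling hypothesis to be invoked twice per factor and produces the exponent $4$ rather than $2$. The one point requiring genuine care is the range of validity: applying doubling at radius $R/2$ presumes $R/2\geq 1$, so the clean chain above is justified for $R\geq 2$. For the residual window $1\leq R<2$ I would argue separately, either by invoking the equivalence of bounded doubling with bounded $\mathbf{t}$-dilation noted after Definition \ref{def:bdtdilation}, or by a direct monotonicity estimate over this finite-radius range using properness; in either case the constant $C^4$ (or a harmless enlargement of it) is retained.
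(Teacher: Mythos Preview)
Your proposal is correct and follows essentially the same route as the paper: the sandwich $B_{\mathbb{L}^{(1)}}(R/2)\times B_{\mathbb{L}^{(2)}}(R/2)\subseteq B_{\mathbb{L}^{\Sigma}}(R)\subseteq B_{\mathbb{L}^{(1)}}(R)\times B_{\mathbb{L}^{(2)}}(R)$, combined with two applications of doubling per factor to pass from radius $R/2$ to $2R$, is exactly the argument the paper gives. Your observation that the step $|B_{\mathbb{L}^{(i)}}(2R)|\leq C^2|B_{\mathbb{L}^{(i)}}(R/2)|$ is only immediately justified for $R\geq 2$ is well taken; the paper in fact asserts this for all $R\geq 1$ without comment, so on this point you are being more careful than the original.
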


\begin{proof}
A length function on a Cartesian product of two groups can be built from a sum of length functions on each group.  Since length functions take only non-negative values, $\mathbb{L}^{\Sigma}$ vanishes if and only if $\mathbb{L}^{(1)}$ and $\mathbb{L}^{(2)}$ both vanish.  Because $\mathbb{L}^{(1)}$ and $\mathbb{L}^{(2)}$ each evaluate to zero only on $e_{\Gamma_1}$ and $e_{\Gamma_2}$ respectively, $\mathbb{L}^{\Sigma}$ likewise takes null values only on $e_{\Gamma}$.  Next suppose that $(x_1, x_2), (x_1', x_2') \in \Gamma$.  The remaining length function properties of $\mathbb{L}^{(1)}$ and $\mathbb{L}^{(2)}$ imply

$$\mathbb{L}^{\Sigma}((x_1,x_2)^{-1}) = \mathbb{L}^{\Sigma}((x_1^{-1},x_2^{-1})) = \mathbb{L}^{(1)}(x_1^{-1})+\mathbb{L}^{(2)}(x_2^{-1})  = \mathbb{L}^{(1)}(x_1)+\mathbb{L}^{(2)}(x_2) = \mathbb{L}^{\Sigma}((x_1,x_2)),$$
as well as 
\[
\begin{split}
\mathbb{L}^{\Sigma}((x_1,x_2)\,(x_1',x_2')) &= \mathbb{L}^{\Sigma}((x_1\,x_1',x_2\,x_2')) = \mathbb{L}^{(1)}(x_1\, x_1')+ \mathbb{L}^{(2)}(x_2\, x_2')\\ 
&\leq \mathbb{L}^{(1)}(x_1) + \mathbb{L}^{(1)}(x_1')+ \mathbb{L}^{(2)}(x_2)+ \mathbb{L}^{(2)}(x_2') = \mathbb{L}^{\Sigma}((x_1,x_2)) + \mathbb{L}^{\Sigma}((x_1',x_2')).
\end{split}
\]  
Thus $\mathbb{L}^{\Sigma}$ is also a length function on $\Gamma.$ 

Fix an $0<R < \infty$.  If $\mathbb{L}^{\Sigma}((x_1, x_2)) \leq R$, then this same upper bound holds both for $\mathbb{L}^{(1)}(x_1)$ and for $\mathbb{L}^{(2)}(x_2)$.  For each $i \in \{ 1, 2 \}$, define $B_{\mathbb{L}^{(i)}}(R)$ and $B_{\mathbb{L}^{\Sigma}}(R)$ as in part (1) of Definition \ref{def:lengthfcnbddoubl}.  Since $\mathbb{L}^{(1)}$ and $\mathbb{L}^{(2)}$ are also each proper length functions, $B_{\mathbb{L}^{(1)}}(R)$ and $B_{\mathbb{L}^{(2)}}(R)$ are finite sets, as is $B_{\mathbb{L}^{\Sigma}}(R) \subseteq B_{\mathbb{L}^{(1)}}(R) \times B_{\mathbb{L}^{(2)}}(R)$.  Because the same reasoning holds for each $R < \infty$, $\mathbb{L}^{\Sigma}$ is a proper length function on $\Gamma$.  Next, assume that ${\mathbb L}_i$ satisfies the bounded doubling property with constants $C_{{\mathbb L}_i}$ for $i\in \{1,2\},$ and choose an $R \geq 1$.  Repeated application of the bounded doubling condition yields 
\begin{equation}
\label{eq:bddoublesum}
| \, B_{\mathbb{L}^{(1)}}(2R) \, | \, \cdot \, | \, B_{\mathbb{L}^{(2)}}(2R) \, | \leq C^4 \, | \, B_{\mathbb{L}^{(1)}}(R/2)\, | \, \cdot \, | \, B_{\mathbb{L}^{(2)}}(R/2) \, |.
\end{equation}
Note that $B_{\mathbb{L}^{\Sigma}}(2R) \subseteq B_{\mathbb{L}^{(1)}}(2R) \times B_{\mathbb{L}^{(2)}}(2R)$ and $B_{\mathbb{L}^{(1)}}(R/2) \times B_{\mathbb{L}^{(2)}}(R/2) \subseteq B_{\mathbb{L}^{\Sigma}}(R)$, hence $| \, B_{\mathbb{L}^{\Sigma}}(2R) \, |$ is bounded above by $C^4 | \, B_{\mathbb{L}^{\Sigma}}(R) \, |.$  As the choice of $R \geq 1$ was arbitrary, $\mathbb{L}^{\Sigma}$ is of bounded doubling with $C_{\mathbb{L}^{\Sigma}} = C^4$.
\end{proof}
\begin{cor}\label{cor:length function sum}
Fix a prime $p$, and   let $\mathbb{L}_p^{\Sigma}: \mathbb{Z}\Big[\frac{1}{p} \Big] \times \mathbb{Z}\Big[\frac{1}{p}\Big] \to [0, \infty)$ be given by
$$\mathbb{L}_p^{\Sigma}(\gamma_1, \gamma_2): = \mathbb{L}_p(\gamma_1) + \mathbb{L}_p(\gamma_2).$$
Then $\mathbb{L}_p^{\Sigma}$ is a length function on $\mathbb{Z}\Big[\frac{1}{p} \Big] \times \mathbb{Z}\Big[\frac{1}{p}\Big]$ that is of bounded doubling with $C_{\mathbb{L}_p^{\Sigma}}=(4p^8)^4.$
\end{cor}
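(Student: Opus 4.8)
The plan is to derive this corollary as an immediate specialization of Proposition \ref{prop:lengthfcn sum} to the diagonal case $\Gamma_1 = \Gamma_2 = \mathbb{Z}\Big[\frac{1}{p}\Big]$ with $\mathbb{L}^{(1)} = \mathbb{L}^{(2)} = \mathbb{L}_p$. The only substantive work is to confirm that the hypotheses of that proposition are satisfied and then to read off the resulting bounded doubling constant.

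First I would invoke Lemma \ref{lemma:baselengthfcn}, which establishes that $\mathbb{L}_p$ is a proper length function on $\mathbb{Z}\Big[\frac{1}{p}\Big]$, together with Lemma \ref{lemma:boundeddoubling}, which establishes that $\mathbb{L}_p$ is of bounded doubling with constant $C_{\mathbb{L}_p} = 4p^8$. With both factor groups taken to be $\mathbb{Z}\Big[\frac{1}{p}\Big]$ and both chosen length functions equal to $\mathbb{L}_p$, the defining formula $\mathbb{L}_p^{\Sigma}(\gamma_1, \gamma_2) = \mathbb{L}_p(\gamma_1) + \mathbb{L}_p(\gamma_2)$ coincides exactly with the sum length function $\mathbb{L}^{\Sigma}$ appearing in Equation \eqref{eq:length func}, so the two constructions are literally the same object.

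Then I would apply Proposition \ref{prop:lengthfcn sum} directly. Its first conclusion yields that $\mathbb{L}_p^{\Sigma}$ is a proper length function on $\mathbb{Z}\Big[\frac{1}{p}\Big] \times \mathbb{Z}\Big[\frac{1}{p}\Big]$, and its second conclusion gives the bounded doubling constant $C_{\mathbb{L}^{\Sigma}} = C^4$ with $C = \max\{C_{\mathbb{L}^{(1)}}, C_{\mathbb{L}^{(2)}}\}$. In the present situation $C = \max\{4p^8, 4p^8\} = 4p^8$, whence $C_{\mathbb{L}_p^{\Sigma}} = (4p^8)^4$, as claimed. I do not expect any genuine obstacle here, since the entire content of the corollary has already been absorbed into Proposition \ref{prop:lengthfcn sum} and Lemma \ref{lemma:boundeddoubling}; the only point deserving a word of care is the trivial bookkeeping that the maximum of the two identical factor constants is again $4p^8$, so that raising it to the fourth power produces precisely the stated value.
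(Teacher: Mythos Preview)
Your proposal is correct and matches the paper's approach exactly: the corollary is stated without proof immediately after Proposition~\ref{prop:lengthfcn sum}, being the direct specialization to $\Gamma_1=\Gamma_2=\mathbb{Z}\big[\tfrac{1}{p}\big]$ with $\mathbb{L}^{(1)}=\mathbb{L}^{(2)}=\mathbb{L}_p$, using Lemma~\ref{lemma:boundeddoubling} for the constant $C_{\mathbb{L}_p}=4p^8$.
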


\subsection{Spectral Triples on Noncommutative Solenoids from Length Functions with Bounded Doubling}
\label{subsec:SpecTripSolandindlims}

Throughout this section, fix a prime $p$ and let $\Gamma=\mathbb{Z}\Big[\frac{1}{p}\Big]\times \mathbb{Z}\Big[\frac{1}{p}\Big].$  Denote $\ell^2(\Gamma)$ by $H.$
To build our Dirac operators, we recall from Definition \ref{def:ncsolenoid} that the noncommutative solenoid $\mathcal{A}_{\theta}^{\mathcal S}$ is defined to be the twisted group $C^*$-algebra $C^*\Big(\Gamma, \sigma_{\theta}\Big)$. 
 Moreover,  $C^*\Big(\Gamma, \sigma_{\theta}\Big)$ has a faithful representation as bounded operators on the Hilbert space  $H$ via 
the left-$\sigma$ regular representation $\lambda_{\sigma_{\theta}}$.   Let $\mathcal{D}_{p}$ be the unbounded operator $M_{\length}$ defined on a dense subspace of $H.$  Since $\length$ is  proper, $\mathcal{D}_{p}$ can also be shown to satisfy conditions (ST1) and (ST2) of Definition \ref{def:spectraltriple} for a Dirac operator. Moreover, we will show that the spectral triple coming from $\mathcal{D}_{p}$ is finitely summable. That is, we have:
\begin{thm}\label{thm:ncsolenoidspectraltriple}
Fix a prime $p$.  Let $\Gamma = \mathbb{Z}\Big[\frac{1}{p}\Big]\times \mathbb{Z}\Big[\frac{1}{p}\Big]$ and let $H=\ell^2(\Gamma).$  For every $\theta \in \Omega_p,\; ({\mathcal A}_{\theta}^{\mathcal S}, H, \mathcal{D}_{p})$ with representation $\lambda_{\sigma}$ is a spectral triple for the noncommutative solenoid ${\mathcal A}_{\theta}^{\mathcal S}$ with associated smooth subalgebra $C_C(\Gamma,\sigma_{\theta}).$ Furthermore, this spectral triple is finitely summable.
\end{thm}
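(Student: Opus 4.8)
The plan is to check each clause of Definition \ref{def:spectraltriple} in turn and then to establish finite summability directly from the polynomial growth forced by bounded doubling. First I would note that $\mathcal{A}_\theta^{\mathcal S} = C^*(\Gamma,\sigma_\theta)$ is unital, with unit $\delta_e$ arising from the identity $e$ of the discrete group $\Gamma$, and that because $\Gamma$ is abelian, hence amenable, the left-$\sigma$ regular representation $\lambda_{\sigma_\theta}$ is a faithful unital representation on $H = \ell^2(\Gamma)$, since $C^*(\Gamma,\sigma_\theta) \cong C^*_r(\Gamma,\sigma_\theta)$ by \cite{ZellerMeier}. For condition (ST1), the key observation is that $\mathcal{D}_p = M_{\length}$ acts diagonally on the orthonormal basis $\{\delta_\gamma : \gamma \in \Gamma\}$ by $\mathcal{D}_p\delta_\gamma = \length(\gamma)\delta_\gamma$, so it is self-adjoint on its natural domain and its resolvent is the diagonal operator with entries $(\length(\gamma) - \lambda)^{-1}$. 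Since $\length$ is proper by Corollary \ref{cor:length function sum}, each ball $B_{\length}(R)$ is finite, so the eigenvalues $\length(\gamma)$ accumulate only at infinity with finite multiplicities; hence $R_\lambda(\mathcal{D}_p)$ is compact.

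Condition (ST2) is nearly immediate with the choice $\mathcal{A} = C_C(\Gamma,\sigma_\theta)$, a dense $\ast$-subalgebra of $C^*(\Gamma,\sigma_\theta)$: for $f \in C_C(\Gamma,\sigma_\theta)$ the commutator $[\mathcal{D}_p,\lambda_{\sigma_\theta}(f)]$ is bounded by the key inequality \eqref{inequal:normcommutator}, whose right-hand side $\sum_{\gamma}\length(\gamma)|f(\gamma)|$ is a finite sum because $f$ has finite support.

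The substantive part is finite summability, where I must produce an $s > 0$ for which $(\mathrm{Id} + \mathcal{D}_p^2)^{-s/2}$ is trace class. As $\mathcal{D}_p$ is diagonal, this reduces to the convergence of $\sum_{\gamma \in \Gamma}(1 + \length(\gamma)^2)^{-s/2}$. The strategy is to bound the counting function $N(R) := |B_{\length}(R)|$ polynomially and then sum over dyadic shells. From Equation \eqref{eq:inclusions} one gets $|B_{\mathbb{L}_p}(R)| \lesssim R^2$ for the one-variable length function (choosing $d$ with $p^d \leq R < p^{d+1}$ and using $p^{2(d+1)} \leq p^2 R^2$), and since $B_{\length}(R) \subseteq B_{\mathbb{L}_p}(R) \times B_{\mathbb{L}_p}(R)$, it follows that $N(R) \lesssim R^4$. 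Decomposing $\Gamma$ into the shells $S_k = \{\gamma : 2^k \leq \length(\gamma) < 2^{k+1}\}$ for $k \geq 0$, together with the finite set where $\length(\gamma) < 1$, each shell has at most $N(2^{k+1}) \lesssim 2^{4k}$ elements and contributes terms of size at most $2^{-sk}$, so the series is dominated by $\sum_{k\geq 0} 2^{(4-s)k}$, which converges precisely when $s > 4$. This exhibits finite summability with metric dimension at most $4$.

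I expect the finite-summability estimate to be the main obstacle. The difficulty is not that any individual step is deep, but that, in contrast to the finitely generated setting, one cannot simply invoke Connes' theorem relating polynomial growth to finite summability; the trace-class bound must be extracted directly from the explicit ball-growth estimates of Lemma \ref{lemma:baselengthfcn} and Corollary \ref{cor:length function sum}. The only genuine care needed is in the bookkeeping of the dyadic decomposition and in treating separately the finitely many $\gamma$ with $\length(\gamma) < 1$, both of which are routine.
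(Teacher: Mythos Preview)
Your argument is correct. For (ST1) and (ST2) it matches the paper's proof essentially verbatim. For finite summability, however, you take a different route: the paper works only with the abstract bounded doubling constant $C_{\mathbb{L}_p^{\Sigma}}$, deducing $|B_{\mathbb{L}_p^{\Sigma}}(2^n)| \leq (C_{\mathbb{L}_p^{\Sigma}})^n |B_{\mathbb{L}_p^{\Sigma}}(1)|$, then bounds each dyadic annulus by $(C_{\mathbb{L}_p^{\Sigma}}-1)(C_{\mathbb{L}_p^{\Sigma}})^{n-1}|B_{\mathbb{L}_p^{\Sigma}}(1)|$ elements and obtains convergence of the trace for $t > \log_2(C_{\mathbb{L}_p^{\Sigma}})$. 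You instead feed the explicit inequality $|B_{\mathbb{L}_p}(p^d)| \leq 2p^{2d}+1$ from Equation~\eqref{eq:inclusions} through the product inclusion $B_{\mathbb{L}_p^{\Sigma}}(R) \subseteq B_{\mathbb{L}_p}(R) \times B_{\mathbb{L}_p}(R)$ to get $N(R) \lesssim R^4$, and your dyadic sum then converges for $s > 4$. Your bound on the metric dimension is therefore far sharper (the paper's constant $C_{\mathbb{L}_p^{\Sigma}} = (4p^8)^4$ gives a threshold of $8 + 32\log_2 p$), but the paper's argument has the virtue of showing that bounded doubling \emph{alone} forces finite summability, which is the more transportable conclusion and is exactly what Remark~\ref{rem:cqms} promises.
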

\begin{proof}
The following is a slight modification of the proof of Long and Wu's Proposition 3.3 in \cite{Long-Wu2}. 	
	
For every $\gamma \in \Gamma$, $\delta_{\gamma}$ is an eigenvector of $\mathcal{D}_{p}$ with eigenvalue $\mathbb{L}_p^{\Sigma}(\gamma)$.  Hence each such $\delta_{\gamma}$ is also an eigenvector of $(\mathcal{D}_{p}- it \text{Id}_H)^{-1}$, $t \in \mathbb{R} \setminus \{ 0 \}$, with eigenvalue $(\mathbb{L}_p^{\Sigma}(\gamma) - it)^{-1}$.  To show that $(\mathcal{D}_{p}- it\text{Id}_H)^{-1}$ is compact for a fixed $t \in \mathbb{R} \setminus \{ 0 \}$, choose an $\epsilon >0$.  Then there exists $R>0$ such that for $z \in \mathbb{R}$, $z > R$ implies $|z - it|^{-1} < \epsilon$.  Given such an $R$, define $B_{\mathbb{L}_p^{\Sigma}}(R)$ as in  part (1) of Definition \ref{def:lengthfcnbddoubl}.  Since $\mathbb{L}_p^{\Sigma}$ is proper, $B_{\mathbb{L}_p^{\Sigma}}(R)$ is a finite set.  Let $\{ \gamma_j \}_{1 \leq j \leq m}$ be an enumeration of the elements in this set.  For each $1 \leq j \leq m$, let $P_j$ signify the orthogonal projection taking $H$ onto the one-dimensional subspace spanned by $\delta_{\gamma_j}$.  Then 
$$\| (\mathcal{D}_{p} - it\text{Id}_H)^{-1} - \Sigma_{j=1}^m (\mathbb{L}_p^{\Sigma}(\gamma_j) - it)^{-1} P_j \|_{\mathcal{B}(H)} = \sup \left\{ |\mathbb{L}_p^{\Sigma}(\gamma) - it|^{-1} : \gamma \notin B_{\mathbb{L}_p^{\Sigma}}(R)  \right\} < \epsilon.$$
As the choice of $\epsilon$ was arbitrary, $(\mathcal{D}_{p} - it\text{Id}_H)^{-1}$ can be approximated  in norm by finite rank operators.  Hence $\mathcal{D}_{p}$ satisfies condition (ST1).  Since $\mathbb{L}_p^{\Sigma}$ is an unbounded, proper length function as a consequence of Lemma \ref{lemma:baselengthfcn} and Proposition \ref{prop:lengthfcn sum}, the associated Dirac operator $\mathcal{D}_{p}$ and subalgebra $C_C(\Gamma,\sigma_{\theta})$ also satisfy condition (ST2), by Proposition 3.3 of \cite{Long-Wu2}.

We finally prove that the spectral triple $({\mathcal A}_{\theta}^{\mathcal S}, H, \mathcal{D}_{p})$ is finitely summable, i.e. we prove that there exists $s>0$ such that for all $t>s,\;(\text{Id}_H + [\mathcal{D}_{p}]^2)^{\frac{-t}{2}}$ is a trace class operator.  For this we use the bounded doubling condition proved for $\mathcal{D}_{p}$ in Corollary \ref{cor:length function sum}.  Recall there exists a constant $C_{\mathbb{L}_p^{\Sigma}}>1$ such that for every $R\;\geq\;1,\;|B_{\mathbb{L}_p^{\Sigma}}(2R)|\leq C_{\mathbb{L}_p^{\Sigma}}|B_{\mathbb{L}_p^{\Sigma}}(R)|.$  Taking $R=1,$ by induction, we obtain for every $n\in\mathbb N$ that 
$$|B_{\mathbb{L}_p^{\Sigma}}(2^n)|\leq\;C_{\mathbb{L}_p^{\Sigma}}|B_{\mathbb{L}_p^{\Sigma}}(2^{n-1})|\;\text{and}\;|B_{\mathbb{L}_p^{\Sigma}}(2^n)|\leq\;(C_{\mathbb{L}_p^{\Sigma}})^n|B_{\mathbb{L}_p^{\Sigma}}(1)|.$$
Fix $t>0.$ The eigenfunctions for $(\text{Id}_H + [\mathcal{D}_{p}]^2)^{\frac{-t}{2}}$ are exactly the delta functions $\{\delta_{\gamma}\;:\;\gamma\in \Gamma\}$ with corresponding eigenvalues equal to 
$\Big\{\frac{1}{(1+(\mathbb{L}_p^{\Sigma}(\gamma))^2)^{t/2}}\;:\;\gamma\in \Gamma\Big\}.$ Let $\delta_{\gamma_0}$ be a fixed eigenfunction for $\mathcal{D}_{p}.$
   To obtain an upper bound on the modulus of the eigenvalue corresponding to $\delta_{\gamma_0},$ we need to determine where $\gamma_0$ sits in the partition
$$\Gamma\;=\;B_{\mathbb{L}_p^{\Sigma}}(1)\;\bigsqcup\;\sqcup_{n=1}^{\infty}[B_{\mathbb{L}_p^{\Sigma}}(2^n)\backslash B_{\mathbb{L}_p^{\Sigma}}(2^{n-1})].$$
  To check that  $(\text{Id}_H + [\mathcal{D}_{p}]^2)^{\frac{-t}{2}}$  is trace class, we just need to check that the sum of moduli of the eigenvalues of this operator, counted with multiplicity, converge.
We note that within the ball $B_{\mathbb{L}_p^{\Sigma}}(1),\;(\text{Id}_H + [\mathcal{D}_{p}]^2)^{\frac{-t}{2}}$ has $|B_{\mathbb{L}_p^{\Sigma}}(1)|$ eigenfunctions having eigenvalues with modulus less than or equal to 
$\frac{1}{(1+(0)^2)^{t/2}}=1.$
Then for $n\geq 1,$ each annulus $[B_{\mathbb{L}_p^{\Sigma}}(2^n)\backslash B_{\mathbb{L}_p^{\Sigma}}(2^{n-1})]$
has cardinality less than or equal to 
$$C_{\mathbb{L}_p^{\Sigma}}|B_{\mathbb{L}_p^{\Sigma}}(2^{n-1})|-|B_{\mathbb{L}_p^{\Sigma}}(2^{n-1})|\;\leq\;(C_{\mathbb{L}_p^{\Sigma}}-1)(C_{\mathbb{L}_p^{\Sigma}})^{n-1}|B_{\mathbb{L}_p^{\Sigma}}(1)|.$$
For each eigenfunction $\delta_{\gamma_0}$ corresponding to $\gamma_0$ lying in this $n^{\text{th}}$ annulus, the corresponding eigenvalues have modulus less than $\Big(\frac{1}{(1+2^{2(n-1)})}\Big)^{t/2}.$
These estimates show that the trace of 
 $(\text{Id}_H + [\mathcal{D}_{p}]^2)^{\frac{-t}{2}}$  is bounded from above by:
\[
\begin{split}
|B_{\mathbb{L}_p^{\Sigma}}(1)|\cdot &1+\sum_{n=1}^{\infty}(C_{\mathbb{L}_p^{\Sigma}}-1)(C_{\mathbb{L}_p^{\Sigma}})^{n-1}|B_{\mathbb{L}_p^{\Sigma}}(1)|\cdot \frac{1}{(1+2^{2(n-1)})^{t/2}}\\
&\leq\;\text{max}\{1, (C_{\mathbb{L}_p^{\Sigma}}-1)\}\cdot|B_{\mathbb{L}_p^{\Sigma}}(1)|\cdot \Big[1+\sum_{n=1}^{\infty}\frac{(C_{\mathbb{L}_p^{\Sigma}})^{n-1}}{(1+2^{2(n-1)})^{t/2}}\Big]\\
&\leq\;\text{max}\{1, (C_{\mathbb{L}_p^{\Sigma}}-1)\}\cdot|B_{\mathbb{L}_p^{\Sigma}}(1)|\cdot \Big[1+ \sum_{n=1}^{\infty} \frac{(C_{\mathbb{L}_p^{\Sigma}})^{n-1}}{(2^t)^{n-1}}\Big]\\
&=\;\text{max}\{1, (C_{\mathbb{L}_p^{\Sigma}}-1)\}\cdot|B_{\mathbb{L}_p^{\Sigma}}(1)|\cdot \Big[1+\sum_{n=1}^{\infty}\Big[\frac{C_{\mathbb{L}_p^{\Sigma}}}{2^t}\Big]^{n-1}\Big],\\
\end{split}
\]
and this last series converges as long as $2^{t}>C_{\mathbb{L}_p^{\Sigma}},$ that is, it converges for all $t>\log_2(C_{\mathbb{L}_p^{\Sigma}}).$
Therefore, the spectral triple $({\mathcal A}_{\theta}^{\mathcal S}, H, \mathcal{D}_{p})$ is finitely summable.
\end{proof}

\subsection{Noncommutative Solenoids as Quantum Compact  Metric Spaces}
\label{subsec:SolCQMS}

In the context of spectral triples  and other more general settings, Rieffel developed a theory (later expanded by Latr\'emoli\`ere) that can be termed \textit{noncommutative metric geometry}, see Rieffel \cite{Rieffel, Rieffel2}.  The basis for this perspective is built on a noncommutative analogue of a compact metric space.  We review some of this work below.  Whereas Rieffel gave the definition below for a seminorm defined on an order--unit space taking finite values, Latr\'emoli\`ere  restricted  to the unital $C^*$-algebra setting, where the seminorm in question takes on finite values on a dense subalgebra of a unital $C^*$-algebra.  Because of the extra properties required, he decided to switch the order of ``compact" and ``quantum".  
  We will give below an adaptation  of the slightly more specialized definition given by Latr\'emoli\`ere, see \cite{La}. Cf. \cite[Remark 2.7]{La}  for a precise comparison of 
Rieffel and Latr\'emoli\`ere's definitions.

\begin{defn}	\label{def:qcms}  
	A {\bf {\qcms} $(A,L)$} is an ordered pair where $A$ is a unital C*-algebra and $L$ is a seminorm defined on a dense $\ast$-subalgebra $\dom{L}$ of the self-adjoint elements $sa(A)$ such that:
	\begin{enumerate}
		\item[(1)] $\{ a \in sa(A) : L(a) = 0 \} = \R\unit_A$,
		\item[(2)] the {\MongeKant} $\Kantorovich{L}$, defined on the state space ${\mathcal S}(A)$ of $A$ by setting for all $\varphi,\psi \in \mathcal{S}(A)$:
		\begin{equation*}
		\Kantorovich{L}(\varphi,\psi) = \sup\left\{ \left|\varphi(a)-\psi(a)\right| : a\in\dom{L}, L(a)\leq 1 \right\}
		\end{equation*}
		metrizes the weak* topology restricted to the state space $\mathcal{S}(A)$ of $A$.
	\end{enumerate}
	Moreover, seminorms $L$ on $A$ that satisfy conditions (1) and (2) are called  {\bf Lip-norms}.
\end{defn}

\begin{example}
	\label{example_qcms}(Rieffel \cite{Rieffel})
	Let $(X, d)$ be a compact metric space.  Set $L_d$ equal to the Lipschitz seminorm on $C(X)$ associated to $d$- that is, for every $f$ in $C(X)$, 
	$$L_{d}(f) = \sup \Big\{ \frac{|f(x) - f(y)|}{d(x,y)}: x,y \in X, x \neq y \Big\}.$$
	$\newline$
Then $(C(X), L_d)$ is a quantum compact metric space.  It is well-known  that the metric $d$ can be recovered from the restriction of $mk_{L_d}$ to the pure states.
\end{example}

\begin{rmk}  In Proposition  1.3 of \cite{Rieffel-Ozawa}, it is shown that for a unital  $C^*$-algebra $A$ with a trace $\tau,$ a Lipschitz seminorm $L$ for $A$ will be a Lip-norm on $A$ if and only if $\{ a \in \text{dom}(L) : L(a) \leq 1,\;\tau(a)=0 \}$ is totally bounded for the topology induced by $\| \cdot \|_A$  on $\text{dom}(L)$.
	
	  There is an additional property that can be assigned to Lipschitz seminorms, which is the {\bf Leibniz property}:  a Lipschitz seminorm $L$ on $A$ is said to satisfy the {\bf Leibniz property} if, for all $a, b \;\in\; \text{dom}(L)$: 
	  $$L(ab)\;\leq\;\|a\|_A\cdot L(b)+\|b\|_A\cdot L(a).$$
	  
	  In his studies and generalizations of compact quantum metric spaces, Latr\'emoli\`ere in \cite{La} introduces an additional Leibniz condition for Lip-norms of Jordan-Lie type: that is, for all  $a, b \in \text{dom}(L),$  
 $$\frac{ab + ba}{2}, \frac{ab - ba}{2i} \in \text{dom}(L),$$
and $L$ satisfies the {\bf Leibniz inequality of  Jordan-Lie type}:
$$	\max \Big\{ L \Big(\frac{ab + ba}{2} \Big), L \Big(\frac{ab -ba}{2i} \Big) \Big\} \leq L(a) \, \| b \|_A + \| a \|_A \, L(b).$$

Therefore we make the following definition.

\begin{defn} (\cite{La} Definition 2.19, \cite{LaPa2} Definition 1.4)
	\label{def:fredLeibqcms}
	Let $A$ be a unital $C^*$-algebra, and let $L$ be a Lip-norm on a dense subspace $\text{dom}(L)$ of $A.$  We say that the ordered pair $(A,L)$ is a {\bf Leibniz quantum compact metric space} if in addition to conditions (1) and (2) of Definition \ref{def:qcms}, the following conditions are satisfied:
	\begin{enumerate}
	\item[(3)] The seminorm $L$ satisfies the Leibniz inequality of Jordan-Lie type.
	
	\item[(4)] The seminorm $L$ is lower semicontinuous with respect to the norm $\|\cdot\|_A$ topology restricted to $\text{dom}(A).$
	\end{enumerate}
\end{defn}

In Proposition 2.17 of \cite{La},  Latr\'emoli\`ere proves that if $(A,L)$ is a quantum compact metric space in the sense of Definition \ref{def:qcms}, with $L$ satisfying the Leibniz property, then $(A,L)$ will satisfy the Leibniz inequality of  Jordan-Lie type.  By Long and Wu's Proposition 6.3 in \cite{Long-Wu2}, it is the case that the Lip-norms we discuss coming from length functions will satisfy the Leibniz  property, and therefore they satisfy the Leibniz inequality of Jordan-Lie type. We will use this fact in proving Theorem \ref{thm:ncsolenoidqcms}.
	\end{rmk}

Noncommutative metric geometry allows the theory of metric geometry  to be applied to noncommutative $C^*$-algebras via their state spaces.  As with classical compact metric spaces, the set of quantum compact metric spaces can be equipped with a metric.  In analogy with its classical counterpart,  Latr\'emoli\`ere's Gromov--Hausdorff propinquity on Leibniz quantum compact metric spaces, which builds on the earlier work of Rieffel, has many of the same properties as in the compact metric space setting \cite{La}. 

 Inductive limit $C^*$-algebras admit natural candidates for quantum metric approximations.  Aguilar and Latr\'{e}moli\`{e}re have identified some conditions under which inductive limit convergence can also be described by metric convergence \cite{Agu, AgLat}, and T. Landry, M. Lapidus, and  Latr\'{e}moli\`{e}re in \cite{Lan-Lap-Lat} studied metric approximation of spectral triples on fractals of Sierpinski gasket type .  Latr\'{e}moli\`{e}re and Packer also considered the question of metric convergence in the setting of noncommutative solenoids \cite{LaPa2}.  When a compact group acts ergodically on a unital $C^*$-algebra, any length function on the group induces a quantum compact metric space structure on the $C^*$-algebra \cite{Rieffel2}.  Working in the more general context of inductive limits of $C^*$-algebras on which projective limits of compact metrizable groups act ergodically, Latr\'{e}moli\`{e}re and Packer showed that noncommutative solenoids are limits, for the Gromov-Hausdorff propinquity, of rotation algebras.  Their result relied on the definition of certain length functions to construct Lip-norms on an inductive sequence of rotation algebras and their inductive limit.  These particular Lip-norms did not come from Dirac operators,  as in the earlier work of Connes,  \cite{Con89}.  Since we want to study noncommutative solenoids as both quantum compact metric spaces as well as noncommutative Riemannian manifolds, we now describe how our results of Sections  \ref{subsec:lengthfunctionsonZp} and \ref{subsec:SpecTripSolandindlims} give noncommutative solenoids the structure of quantum compact metric spaces.

In \cite{Con89}, Connes studied spectral triples on reduced group $C^*$-algebras with Dirac operators built in the same way as $\mathcal{D}_p$ from length functions on discrete groups.  Given a unital faithful representation $\pi: A \rightarrow \mathcal{B}(H)$, let $L_{D}$ be the seminorm on the associated smooth subalgebra ${\mathcal A}$ given by
\begin{equation}
\label{grouplengthseminorm}
    L_D(a) = \| \, [D, \pi(a) ] \, \|_{\mathcal{B}(H)}.
\end{equation}
As in the unital commutative case, Connes showed that a seminorm defined as in Equation (\ref{grouplengthseminorm}) from such a Dirac operator gives rise to a metric on the state space of the reduced group $C^*$-algebra \cite{Con89}, \cite{Christ-Rieffel}.  This metric is precisely the Monge-Kantorovich metric given in part (2) of Definition \ref{def:qcms}. 
When $D$ is the Dirac operator constructed by Connes from a proper length function $\mathbb{L}$ on a discrete group $\Gamma$, Christ and Rieffel showed that $\text{mk}_{L_D}$ also metrizes the weak$^*$-topology on $\mathcal{S}(C_r^*(\Gamma))$ if $\mathbb{L}$ is in addition assumed to be of bounded doubling.  In a more general setting, Rieffel, as well as Ozawa in collaboration with Rieffel, identified alternate characterizations of Lip-norms \cite{Rieffel2}, \cite{Rieffel-Ozawa}.  In \cite{Long-Wu2}, Long and Wu used one of these equivalent sets of conditions to extend Christ and Rieffel's work to discrete groups equipped with a 2-cocycle.  Since noncommutative solenoids are defined to be twisted discrete group $C^*$-algebras, our Dirac operators can also be shown to give rise to Leibniz Lip-norms on noncommutative solenoids.

\begin{thm}\label{thm:ncsolenoidqcms}
Fix a prime $p$.  For every $\theta \in \Omega_p$, let  $(\mathcal{A}_{\theta}^{\mathcal S}, H, \mathcal{D}_p)$ be the spectral triple from Theorem \ref{thm:ncsolenoidspectraltriple}.  Then  $(\mathcal{A}_{\theta}^{\mathcal S}, L_{\mathcal{D}_p})$ is a Leibniz quantum compact metric space in the sense of Definition \ref{def:fredLeibqcms}.   
\end{thm}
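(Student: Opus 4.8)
The plan is to verify, one at a time, the four conditions that Definition~\ref{def:fredLeibqcms} imposes on the pair $(\mathcal{A}_\theta^{\mathcal S}, L_{\mathcal{D}_p})$: conditions (1) and (2) of Definition~\ref{def:qcms}, together with the Leibniz inequality of Jordan--Lie type (3) and lower semicontinuity (4). The essential input is that the length function $\mathbb{L}_p^{\Sigma}$ is of bounded doubling, established in Corollary~\ref{cor:length function sum}; everything else is assembled from this, together with the spectral triple already produced in Theorem~\ref{thm:ncsolenoidspectraltriple} and the cited results of Long--Wu and Latr\'emoli\`ere.

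For condition (1), I would argue directly from the eigenstructure of $\mathcal{D}_p = M_{\mathbb{L}_p^{\Sigma}}$: writing $a \in C_C(\Gamma,\sigma_\theta)$ as $a = \sum_\gamma a(\gamma)\delta_\gamma$ and evaluating $[\mathcal{D}_p, \lambda_{\sigma_\theta}(a)]$ on $\delta_e$ shows that $L_{\mathcal{D}_p}(a)=0$ forces $a(\gamma)\,\mathbb{L}_p^{\Sigma}(\gamma)=0$ for every $\gamma$; since $\mathbb{L}_p^{\Sigma}(\gamma)>0$ for $\gamma\neq e$, only the scalars survive, and restricting to self-adjoint elements gives exactly $\R\mathbf{1}$. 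For condition (2)---the metrization of the weak$^*$ topology---the key observation is that this is precisely the conclusion of Long and Wu's extension of the Christ--Rieffel theorem to twisted group $C^*$-algebras, whose hypothesis is bounded doubling (equivalently, bounded $\mathbf{t}$-dilation, by the remark following Definition~\ref{def:bdtdilation}). Since $\Gamma = \mathbb{Z}[\frac{1}{p}]\times\mathbb{Z}[\frac{1}{p}]$ is \emph{not} finitely generated, the classical Christ--Rieffel result does not apply, and it is exactly Long and Wu's framework (built on the Rieffel--Ozawa characterizations of Lip-norms) that covers our case; Corollary~\ref{cor:length function sum} supplies the required bounded-doubling constant, so their theorem yields condition (2) directly.

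Conditions (3) and (4) are then quick. For (3), the remark preceding this theorem records that Long and Wu's Proposition~6.3 establishes the Leibniz property for $L_{\mathcal{D}_p}$, and Latr\'emoli\`ere's Proposition~2.17 upgrades the Leibniz property to the Leibniz inequality of Jordan--Lie type; I would simply cite this chain. For (4), I would invoke the standard fact that a seminorm of the form $a\mapsto\|[D,\pi(a)]\|$ is lower semicontinuous for the $C^*$-norm: writing $L_{\mathcal{D}_p}(a)=\sup\{\,|\langle \mathcal{D}_p\xi, \lambda_{\sigma_\theta}(a)\eta\rangle - \langle \xi, \lambda_{\sigma_\theta}(a)\mathcal{D}_p\eta\rangle|\,:\, \xi,\eta\in\mathrm{dom}(\mathcal{D}_p),\ \|\xi\|,\|\eta\|\leq 1\,\}$ exhibits it as a supremum of functionals each continuous in $a$ for the $C^*$-norm, and a supremum of continuous functions is lower semicontinuous.

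The main obstacle---really the only point requiring genuine care---is condition (2): one must confirm that Long and Wu's metrization theorem truly applies in the non-finitely-generated setting and that the bounded-doubling property we proved is the precise hypothesis it needs. All the substantive analytic work for this was already carried out in Section~\ref{subsec:lengthfunctionsonZp} (the construction of $\mathbb{L}_p$ and the doubling estimates), so the present theorem is largely a matter of correctly invoking the established machinery; conditions (1), (3), and (4) are routine once the spectral triple of Theorem~\ref{thm:ncsolenoidspectraltriple} and the bounded-doubling constant of Corollary~\ref{cor:length function sum} are in hand.
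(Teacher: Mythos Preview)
Your proposal is correct and follows essentially the same route as the paper: both rely on the bounded-doubling property of $\mathbb{L}_p^{\Sigma}$ from Corollary~\ref{cor:length function sum} and then invoke the Long--Wu machinery (their Propositions~6.3/6.4 and 6.9) together with Latr\'emoli\`ere's Proposition~2.17 to obtain the Leibniz quantum compact metric space structure. You are simply more explicit than the paper---supplying direct arguments for conditions (1) and (4) where the paper defers entirely to Long--Wu---but the strategy is the same.
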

\begin{proof}
 Because of Corollary \ref{cor:length function sum}, the conclusion follows immediately from Propositions 6.4 and 6.9 of \cite{Long-Wu2}, which use the property of bounded doubling for the length function to derive that the seminorm $L_{\mathcal{D}_p}$ has the Leibniz property and is lower semicontinuous, respectively. 
\end{proof}
\begin{rmk}
	\label{rmk:uniformlyeqmetrics}
The definition of  compact quantum  metric space used by Long and Wu in \cite{Long-Wu2} is that used by Christ and Rieffel in \cite{Christ-Rieffel}.  Christ and Rieffel were the first to remark that additional conditions might be desired in the definition of a quantum  compact metric space, such as lower semi-continuity for the given seminorm with respect to the $C^*$-algebra norm.  Once again we note the fact that equipped with the weak$^*$-topology, the state space of a unital $C^*$-algebra is compact and second countable, so that any two metrics which induce this same topology on the compact  space must be uniformly  equivalent. The Leibniz Lip-norm on the state space of the noncommutative solenoid obtained by Latr\'emoli\`ere and Packer in \cite{LaPa2} induces the weak$^*$-topology, as does the Leibniz Lip-norm induced by the seminorm coming from the spectral triple, by virtue of the bounded doubling condition being satisfied by the length function ${\mathbb L}_p^{\Sigma}.$  Therefore the metrics on the state space given by these two seminorms must be uniformly  equivalent.
\end{rmk}

\subsection{Restriction of Spectral Triples to Noncommutative Tori}
Recall from Theorem \ref{thm:DLTorus} that noncommutative solenoids can be identified with inductive limits of rotation algebras.  We will next build spectral triples for rotation algebras via suitably defined restrictions of the spectral triples  on noncommutative solenoids constructed in Theorem \ref{thm:ncsolenoidspectraltriple}.  For each $n\in \mathbb N,$ denote the subgroup $\frac{1}{p^n} \mathbb{Z}\times \frac{1}{p^n} \mathbb{Z}$ of $\mathbb{Z}\Big[\frac{1}{p}\Big]\times\mathbb{Z}\Big[\frac{1}{p}\Big]=\Gamma$ by $\Gamma_n.$ Recall from Section 2 that for each prime $p$, the group $\Gamma$ can be realized as an inductive limit of the groups $\Big\{ \Gamma_n \Big\}_{n \in \mathbb{N}}$, where the group monomorphisms $\alpha_n: \Gamma_n \to \Gamma_{n+1}$ are given by inclusion maps. We will show that the length function on $\Gamma_n$ that is the restriction of that given for $\Gamma$ is still of bounded doubling.
\begin{lemma}\label{lemma:restriction bounded doubling}
	For fixed prime $p$ and $n\in \mathbb N$, let $\mathbb{L}_{p,n}$ denote the restriction of $\;\mathbb{L}_p$ to $\frac{1}{p^n}\mathbb{Z}$.  Then $\frac{1}{p^n}\mathbb{Z}$ has the property of bounded $p$-dilation with respect to $\mathbb{L}_{p,n}$.  In particular, $\mathbb{L}_{p,n}$ is of bounded doubling with $C_{\mathbb{L}_{p,n}} = 4 p^8$.
\end{lemma}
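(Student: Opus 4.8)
The plan is to follow the two-step skeleton already used for the full group in Lemma \ref{lemma:boundeddoubling} (power-of-$p$ counting bounds, then a sandwich argument), but to keep track of the fact that balls in $\frac{1}{p^n}\mathbb{Z}$ grow quadratically only up to radius $p^n$ and merely linearly thereafter, since the $p$-adic contribution $\|r\|_p$ is bounded by $p^n$ on this subgroup. Throughout I would write $B_{\mathbb{L}_{p,n}}(R) = B_{\mathbb{L}_p}(R)\cap\frac{1}{p^n}\mathbb{Z}$, and use repeatedly that for $r\in\mathbb{Z}[\frac{1}{p}]$ one has $r\in\frac{1}{p^n}\mathbb{Z}$ if and only if $\|r\|_p\leq p^n$ (both conditions say $v_p(r)\geq -n$). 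The first and central observation is a \emph{coincidence of balls at small radius}: for $1\leq d\leq n$, $B_{\mathbb{L}_{p,n}}(p^d)=B_{\mathbb{L}_p}(p^d)$. Indeed, any $r$ with $\mathbb{L}_p(r)=|r|+\|r\|_p\leq p^d$ has $\|r\|_p\leq p^d\leq p^n$, hence already lies in $\frac{1}{p^n}\mathbb{Z}$; the reverse inclusion is immediate. Consequently the estimate \eqref{eq:inclusions} of Lemma \ref{lemma:baselengthfcn} transfers verbatim, so $2p^{2(d-1)}+1\leq |B_{\mathbb{L}_{p,n}}(p^d)|\leq 2p^{2d}+1$ for $1\leq d\leq n$, and in this range the one-step inequality \eqref{eq:ineq}, namely $|B_{\mathbb{L}_{p,n}}(p^{d+1})|\leq 2p^4\,|B_{\mathbb{L}_{p,n}}(p^d)|$, is inherited directly from the full-group computation.

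Next I would treat the large-radius regime $d\geq n$, where the $p$-adic part has saturated. The crude implication $\mathbb{L}_{p,n}(m/p^n)\leq p^d\Rightarrow |m|\leq p^{n+d}$ gives the upper bound $|B_{\mathbb{L}_{p,n}}(p^d)|\leq 2p^{n+d}+1$, while for $d\geq n+1$ every $r=m/p^n$ with $|m|\leq p^{n+d-1}$ satisfies $\mathbb{L}_{p,n}(r)\leq p^{d-1}+p^n\leq 2p^{d-1}\leq p^d$, yielding the matching lower bound $|B_{\mathbb{L}_{p,n}}(p^d)|\geq 2p^{n+d-1}+1$. Combining these two bounds again produces the one-step inequality $|B_{\mathbb{L}_{p,n}}(p^{d+1})|\leq 2p^4\,|B_{\mathbb{L}_{p,n}}(p^d)|$ for all $d\geq n+1$ (with room to spare, as the honest ratio is only about $p^2$ here). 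The two transition cases $d=n-1$ (both balls still coincide with full-group balls, so \eqref{eq:ineq} applies) and $d=n$ (combine the small-radius lower bound for $B_{\mathbb{L}_{p,n}}(p^n)$ with the large-radius upper bound for $B_{\mathbb{L}_{p,n}}(p^{n+1})$) are checked directly, and the base case $d=0$ follows from $|B_{\mathbb{L}_{p,n}}(1)|=1$ exactly as in the full-group proof.

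Having established $|B_{\mathbb{L}_{p,n}}(p^{d+1})|\leq 2p^4\,|B_{\mathbb{L}_{p,n}}(p^d)|$ for every $d\geq 0$, I would conclude with the same sandwich argument as in Lemma \ref{lemma:boundeddoubling}: given $R\geq 1$, choose $d$ with $p^d\leq R<p^{d+1}$, so that $B_{\mathbb{L}_{p,n}}(p^d)\subseteq B_{\mathbb{L}_{p,n}}(R)\subseteq B_{\mathbb{L}_{p,n}}(pR)\subseteq B_{\mathbb{L}_{p,n}}(p^{d+2})$ as in \eqref{eq:ballcontainment}, whence $|B_{\mathbb{L}_{p,n}}(pR)|\leq (2p^4)^2\,|B_{\mathbb{L}_{p,n}}(p^d)|\leq 4p^8\,|B_{\mathbb{L}_{p,n}}(R)|$, matching \eqref{eq:anotherkeydoublingineq}. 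This is precisely bounded $p$-dilation with constant $4p^8$, and bounded doubling then follows from the remark after Definition \ref{def:bdtdilation}. I expect the main obstacle to be the large-radius regime rather than the bookkeeping: once $R>p^n$ the subgroup $\frac{1}{p^n}\mathbb{Z}$ is \emph{thin} inside $\mathbb{Z}[\frac{1}{p}]$, so one cannot simply bound $|B_{\mathbb{L}_{p,n}}(2R)|$ by $|B_{\mathbb{L}_p}(2R)|$ and quote Lemma \ref{lemma:boundeddoubling}, because that route loses the necessary lower bound on $|B_{\mathbb{L}_{p,n}}(R)|$; the honest linear-growth count in the second step, together with its matching to the quadratic regime at $d\approx n$, is where the real work lies.
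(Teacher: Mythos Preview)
Your proposal is correct and follows essentially the same route as the paper's proof: a case split according to whether the relevant power $p^d$ is below or above the saturation level $p^n$, using coincidence of the subgroup balls with the full-group balls in the small-radius regime (so that the estimates of Lemma~\ref{lemma:boundeddoubling} transfer directly) and a direct linear count $2p^{n+d-1}+1\leq|B_{\mathbb L_{p,n}}(p^d)|\leq 2p^{n+d}+1$ in the large-radius regime, followed by the same sandwich argument \eqref{eq:ballcontainment}--\eqref{eq:anotherkeydoublingineq}. The only organizational difference is that you first establish the one-step inequality $|B_{\mathbb L_{p,n}}(p^{d+1})|\leq 2p^4|B_{\mathbb L_{p,n}}(p^d)|$ uniformly in $d$ and then sandwich, whereas the paper intertwines the two, but the ingredients and constants are identical.
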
	
\begin{proof}
In order to qualify as a length function on $\frac{1}{p^n}\mathbb{Z}$, $\mathbb{L}_{p,n}$ must be well-defined on the identity of the group, the inverses of its elements, and the closure of those elements under the group operation.  Since $\frac{1}{p^n}\mathbb{Z} \leq \mathbb{Z}\Big[\frac{1}{p}\Big]$, $\mathbb{L}_{p}$ restricts to a length function in this setting.  Since $B_{\mathbb{L}_{p,n}}(R) \subseteq B_{\mathbb{L}_{p}}(R)$ for each $0< R < \infty$, $\mathbb{L}_{p,n}$ is also a proper length function on $\frac{1}{p^n}\mathbb{Z}$.   

The argument that $\mathbb{L}_{p,n}$ has the property of bounded $p$-dilation follows the same strategy as in Lemma \ref{lemma:boundeddoubling}. Fix $R \geq 1.$  Define $B_{\mathbb{L}_{p,n}}(R)$ as in part (1) of Definition \ref{def:lengthfcnbddoubl}.  Choose $d \in \mathbb{N}$ so that Equation \eqref{eq:anotherkeydoublingineq} holds for balls defined with $\mathbb{L}_{p,n}$.  Consider first the case when $1 \leq d \leq n$.  Since $B_{\mathbb{L}_{p,n}}(pR) \subseteq B_{\mathbb{L}_{p}}(pR)$, the inequalities described in Equation \eqref{eq:anotherkeydoublingineq} can be applied to obtain the desired bounded doubling constant if $B_{\mathbb{L}_{p}}(R)$ can be shown to be a subset of $B_{\mathbb{L}_{p,n}}(R)$.  Note that every nonzero element in $B_{\mathbb{L}_{p}}(R)$ can be written as $\frac{a}{p^k}$, where $a$ is an integer coprime with $p$ and $k \in \mathbb{N}$.  Since $\mathbb{L}_p\Big(\frac{a}{p^k}\Big) > p^k$, $\frac{a}{p^k} \in B_{\mathbb{L}_{p}}(R)$ implies $k < d$.  Because $d \leq n$, $\mathbb{L}_p\Big(\frac{a}{p^k}\Big)$ coincides with $\mathbb{L}_{p,n}\Big(\frac{a}{p^k}\Big)$.  As a result,
$$|B_{\mathbb{L}_{p,n}}(pR)| \leq |B_{\mathbb{L}_p}(pR)|\;\leq\;4p^8|B_{\mathbb{L}_p}(R)| \leq 4p^8|B_{\mathbb{L}_{p,n}}(R)|.$$
For the case when $d > n$, note that $a \in \mathbb{Z}$ and $|a| \geq p^{d+n}$ implies $\mathbb{L}_{p,n}\Big( \frac{a}{p^n} \Big) > \Big| \frac{a}{p^n} \Big| \geq p^d$.  Moreover, $a \in \mathbb{Z}$ and $|a| \leq p^{d+n-1}$, as well as $n < d$, gives 
$$\mathbb{L}_{p,n}\Big( \frac{a}{p^n} \Big) = \Big| \frac{a}{p^n} \Big| + \Big|\Big| \frac{a}{p^n} \Big|\Big|_{p} \leq p^{d-1}
+ p^n \leq 2p^{d-1} \leq p^d.$$
Therefore, 
$$\Big\{ \frac{a}{p^{n}}: a \in \mathbb{Z}, -p^{d+n-1} \leq a \leq p^{d+n-1} \Big\} \subseteq B_{\mathbb{L}_{p,n}}(p^d) \subseteq \Big\{ \frac{a}{p^{n}}: a \in \mathbb{Z}, -p^{d+n} \leq a \leq p^{d+n} \Big\},$$
which yields, in analogy with Equation \eqref{eq:inclusions},
$$2p^{d+n-1} + 1 \leq |B_{\mathbb{L}_{p,n}}(p^d)| \leq 2p^{d+n}+1,$$
Consequently, an argument similar to that given in Lemma \ref{lemma:boundeddoubling} yields 
$$|B_{\mathbb{L}_{p,n}}(p^{d+1})|< 2p^4(|B_{\mathbb{L}_{p,n}}(p^d)|).$$
In particular, we see that the inequalities described in Equation \eqref{eq:anotherkeydoublingineq} still hold with $\mathbb{L}_p$ replaced by $\mathbb{L}_{p,n}$.  Since the same reasoning holds for each $R \geq 1$, $\frac{1}{p^n}\mathbb{Z}$ has the property of bounded $p$-dilation with respect to $\mathbb{L}_{p,n}$.  As in the setting of $\mathbb{Z}\Big[\frac{1}{p}\Big]$, the conclusion that $\mathbb{L}_p$ is also of bounded doubling when restricted to $\frac{1}{p^n}\mathbb{Z}$ follows from the fact that $|B_{\mathbb{L}_{p,n}}(2R)| \leq |B_{\mathbb{L}_{p,n}}(pR)|$ for $R \geq 1$.
\end{proof}
$\newline$
Proposition \ref{prop:lengthfcn sum}, together with Lemma \ref{lemma:restriction bounded doubling}, now yields the following corollary.
\begin{cor}\label{cor:restriction length function sum}
Fix a prime $p$ and $n \in \mathbb{N}$.  Let $\mathbb{L}_{p,n}^{\Sigma}: \Gamma_n=\frac{1}{p^n}\mathbb{Z} \times \frac{1}{p^n}\mathbb{Z} \to [0, \infty)$ be given by
$$\mathbb{L}_{p,n}^{\Sigma}(\gamma_1, \gamma_2) = \mathbb{L}_{p,n}(\gamma_1) + \mathbb{L}_{p,n}(\gamma_2).$$
The function $\mathbb{L}_{p,n}^{\Sigma}$ is a proper length function on $\Gamma_n$ that is of bounded doubling with $C_{\mathbb{L}_{p,n}^{\Sigma}}=(4p^8)^4$.
\end{cor}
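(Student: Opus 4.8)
The plan is to observe that this corollary is a direct instance of the product construction of Proposition \ref{prop:lengthfcn sum}, applied to the special case where both factors of the Cartesian product coincide. Indeed, $\Gamma_n = \frac{1}{p^n}\mathbb{Z} \times \frac{1}{p^n}\mathbb{Z}$ is precisely a product $\Gamma_1 \times \Gamma_2$ with $\Gamma_1 = \Gamma_2 = \frac{1}{p^n}\mathbb{Z}$, and the definition $\mathbb{L}_{p,n}^{\Sigma}(\gamma_1,\gamma_2) = \mathbb{L}_{p,n}(\gamma_1) + \mathbb{L}_{p,n}(\gamma_2)$ is exactly the summed length function $\mathbb{L}^{\Sigma}$ of Equation \eqref{eq:length func} with $\mathbb{L}^{(1)} = \mathbb{L}^{(2)} = \mathbb{L}_{p,n}$. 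So no new estimate is needed; the work is purely to verify that the hypotheses of Proposition \ref{prop:lengthfcn sum} hold and then to read off the resulting constant.

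First I would invoke Lemma \ref{lemma:restriction bounded doubling}, which supplies exactly the input required: it asserts that $\mathbb{L}_{p,n}$ is a proper length function on $\frac{1}{p^n}\mathbb{Z}$ of bounded doubling with constant $C_{\mathbb{L}_{p,n}} = 4p^8$. This single fact furnishes both the properness hypothesis and the bounded doubling hypothesis that Proposition \ref{prop:lengthfcn sum} demands of each factor. Applying that proposition then yields at once that $\mathbb{L}_{p,n}^{\Sigma}$ is a proper length function on $\Gamma_n$ and that it is of bounded doubling with constant $C^4$, where $C = \max\{C_{\mathbb{L}_{p,n}}, C_{\mathbb{L}_{p,n}}\} = 4p^8$. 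Hence $C_{\mathbb{L}_{p,n}^{\Sigma}} = (4p^8)^4$, as claimed.

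There is essentially no obstacle in this corollary itself: the genuine content was already dispatched in Lemma \ref{lemma:restriction bounded doubling}, whose proof handled the delicate case analysis on the dilation factor (the split between $1 \le d \le n$ and $d > n$ in the restricted setting). The corollary is a bookkeeping assembly, and the only point worth a line of care is confirming that the ``$\max$'' in Proposition \ref{prop:lengthfcn sum} collapses to $4p^8$ because both factors carry the identical length function, so the fourth power of the single constant $4p^8$ is precisely $(4p^8)^4$. I would therefore keep the written proof to a sentence or two, simply citing Lemma \ref{lemma:restriction bounded doubling} and Proposition \ref{prop:lengthfcn sum} and recording the constant.
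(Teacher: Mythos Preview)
Your proposal is correct and matches the paper's approach exactly: the paper simply states that the corollary follows from Proposition~\ref{prop:lengthfcn sum} together with Lemma~\ref{lemma:restriction bounded doubling}, which is precisely the combination you invoke, with the same computation of the constant $(4p^8)^4$.
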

$\newline$
The proof of Corollary \ref{cor:restriction length function sum} can also be adapted to a more general setting.
\begin{cor}\label{cor:generalized restriction length function sum}
Let $\Gamma_1$ and $\Gamma_2$ each be countable discrete  groups and $\Gamma = \Gamma_1 \times \Gamma_2$.  Define $\mathbb{L}^{\Sigma}$ as in Equation \eqref{eq:length func}. For each $i \in \{1, 2\}$, suppose that $G_i$ is a subgroup of  $\Gamma_i$.  Denote the restriction of $\mathbb{L}^{(i)}$ to $G_i$ by $\mathbb{L}_{G_i}^{(i)}$.  If each $\mathbb{L}_{G_i}^{(i)}$ has the bounded doubling property with $C_{\mathbb{L}_{G_i}^{(i)}}$, then the restriction of $\mathbb{L}^{\Sigma}$ to $G_1 \times G_2$ is of bounded doubling with constant given by $\Big[\, max\Big\{C_{\mathbb{L}_{G_1}^{(1)}}, C_{\mathbb{L}_{G_2}^{(2)}} \Big\} \, \Big]^4 $.
\end{cor}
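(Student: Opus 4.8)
The plan is to recognize that this corollary is nothing more than Proposition~\ref{prop:lengthfcn sum} applied to the pair of subgroups $G_1, G_2$ in place of $\Gamma_1, \Gamma_2$, once one checks that restriction to a product subgroup commutes with the sum construction of Equation~\eqref{eq:length func}. The entire content is this reduction; after it is made, the earlier proposition does all the work.

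First I would observe that $G_1 \times G_2$ is a subgroup of $\Gamma = \Gamma_1 \times \Gamma_2$, and that for any $(x_1, x_2) \in G_1 \times G_2$ the restriction of $\mathbb{L}^{\Sigma}$ satisfies
$$\mathbb{L}^{\Sigma}((x_1, x_2)) = \mathbb{L}^{(1)}(x_1) + \mathbb{L}^{(2)}(x_2) = \mathbb{L}_{G_1}^{(1)}(x_1) + \mathbb{L}_{G_2}^{(2)}(x_2),$$
where the second equality merely uses that $\mathbb{L}_{G_i}^{(i)}$ is by definition the restriction of $\mathbb{L}^{(i)}$ to $G_i$. Thus the restriction of $\mathbb{L}^{\Sigma}$ to $G_1 \times G_2$ is precisely the length function obtained by applying Equation~\eqref{eq:length func} to the groups $G_1, G_2$ equipped with the length functions $\mathbb{L}_{G_1}^{(1)}, \mathbb{L}_{G_2}^{(2)}$.

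Next, since each $\mathbb{L}_{G_i}^{(i)}$ is assumed to have the bounded doubling property, it is in particular a proper length function on $G_i$, because properness is built into Definition~\ref{def:lengthfcnbddoubl}(2). Hence the hypotheses of Proposition~\ref{prop:lengthfcn sum} are met with $\Gamma_i$ replaced by $G_i$ and $\mathbb{L}^{(i)}$ replaced by $\mathbb{L}_{G_i}^{(i)}$. Invoking that proposition then yields that the sum length function on $G_1 \times G_2$ --- which by the first step is exactly the restriction of $\mathbb{L}^{\Sigma}$ --- is of bounded doubling with constant $\big[\max\{C_{\mathbb{L}_{G_1}^{(1)}}, C_{\mathbb{L}_{G_2}^{(2)}}\}\big]^4$, as claimed.

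There is no genuine obstacle here. The only point requiring a moment's care is confirming that the bounded doubling estimates for $\mathbb{L}_{G_i}^{(i)}$ are genuinely hypotheses on the subgroups $G_i$, with their own ball counts $|B_{\mathbb{L}_{G_i}^{(i)}}(R)|$ computed inside $G_i$, rather than on the ambient $\Gamma_i$. This is exactly how the statement is phrased, so --- in contrast to Lemma~\ref{lemma:restriction bounded doubling} --- no comparison between ball counts in the subgroup and in the ambient group is needed, and the result follows directly from Proposition~\ref{prop:lengthfcn sum}.
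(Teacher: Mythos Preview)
Your proposal is correct and matches the paper's own approach: the paper does not give an explicit proof of this corollary but simply remarks that the argument for the preceding corollary (which itself is Proposition~\ref{prop:lengthfcn sum} applied with the restricted length functions) adapts to this more general setting. Your write-up makes that reduction explicit and is, if anything, more detailed than what the paper provides.
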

As a consequence of Corollary \ref{cor:restriction length function sum}, our Dirac operators for noncommutative solenoids can be used to define Dirac operators for rotation algebras.  Given $C^*\Big(\mathbb{Z}\Big[\frac{1}{p}\Big]\times \mathbb{Z}\Big[\frac{1}{p}\Big], \sigma_{\theta}\Big)$, recall from Subsection 2.2 that $(\sigma_{\theta})_n$ is the restriction of the multiplier $\sigma_{\theta}$ to the subgroup $\Gamma_n=\frac{1}{p^n}\mathbb{Z} \times \frac{1}{p^n}\mathbb{Z}$.  For the representation of $C^*\Big(\Gamma_n, (\sigma_{\theta})_n\Big)$ as bounded operators on the Hilbert space ${H}_n :=\ell^2\big(\Gamma_n\big)$, set $\lambda_{(\sigma_{\theta})_n}$ equal to the left $(\sigma_{\theta})_n$-regular  representation of $C^*\Big(\Gamma_n, (\sigma_{\theta})_n\Big)$ on $H_n$.  For every $n \in \mathbb{N}$, there exists a $^*$-isomorphism $\upsilon_{n}$ which takes $C^*(\mathbb{Z}^2, \sigma_{\theta_{2n}})$ to $C^*(\Gamma_n, (\sigma_{\theta})_n)$.  Then $\pi_{\theta_{2n}} : C^*(\mathbb{Z}^2, \sigma_{\theta_{2n}}) \to \mathcal{B}(H_n)$ defined by
$$\pi_{\theta_{2n}} = \lambda_{(\sigma_{\theta})_n}\circ \upsilon_{n}$$
gives a representation of $ C^*(\mathbb{Z}^2, \sigma_{\theta_{2n}})$ that is unital and faithful. Also using $\upsilon_n$ to denote the abstract isomorphism between the groups $\mathbb{Z}^2$ and  $\Gamma_n,$ we let ${D}_{p,n}$ be the operator $M_{\mathbb{L}_{p,n}^{\Sigma}\circ\upsilon_n }$. We note that for every $n\in\mathbb N,$ the length function $\mathbb{L}_{p,n}^{\Sigma}\circ\upsilon_n$ on $\mathbb{Z}^2$  is of bounded doubling, since $\mathbb{L}_{p,n}^{\Sigma}$ is of bounded doubling on $\Gamma_n$ by Corollary \ref{cor:restriction length function sum} .  Since unbounded length functions on groups that are of bounded doubling are also proper, the same kinds of arguments as used in the proof of Theorem \ref{thm:ncsolenoidspectraltriple} can be applied to demonstrate that ${D}_{p,n}$ is a Dirac operator.  
\begin{prop}\label{thm:quantumtorusqcms}
Fix a prime $p$ and $n \in \mathbb{N}$.   For every $\theta \in \Omega_p$ and $n \in \mathbb{N}$, $(C^*(\mathbb{Z}^2, \sigma_{\theta_{2n}}), \ell^2(\Gamma_n), {D}_{p,n})$ with representation $\pi_{\theta_{2n}}$ is a spectral triple for the noncommutative torus $C^*(\mathbb{Z}^2, \sigma_{\theta_{2n}}),$ with associated smooth subalgebra $C_C(\mathbb{Z}^2, \sigma_{\theta_{2n}}).$ 
\end{prop}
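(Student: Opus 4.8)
The plan is to transport the argument of Theorem~\ref{thm:ncsolenoidspectraltriple} from the solenoid down to the finite level $\Gamma_n$ by means of the isomorphism $\upsilon_n$, since the proposition asserts only the spectral triple conditions (ST1) and (ST2) of Definition~\ref{def:spectraltriple}, with no finite summability claim to verify. The crucial input is already in hand: by Corollary~\ref{cor:restriction length function sum} the function $\nlength$ is a proper length function on $\Gamma_n$ of bounded doubling, and since $\upsilon_n$ is simultaneously an abstract group isomorphism $\mathbb{Z}^2 \to \Gamma_n$, the pulled-back function $\nlength \circ \upsilon_n$ inherits properness and bounded doubling on $\mathbb{Z}^2$. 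As $\Gamma_n$ is infinite, properness moreover forces $\nlength$ to be unbounded, which is what drives the compact resolvent condition below.

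First I would dispatch the standing structural hypotheses. The representation $\pi_{\theta_{2n}} = \lambda_{(\sigma_{\theta})_n} \circ \upsilon_n$ is unital because both $\upsilon_n$ and $\lambda_{(\sigma_{\theta})_n}$ carry units to units, and it is faithful because $\upsilon_n$ is a $*$-isomorphism while $\Gamma_n$, being abelian hence amenable, satisfies $C^*(\Gamma_n, (\sigma_{\theta})_n) \cong C^*_r(\Gamma_n, (\sigma_{\theta})_n)$, so that the left-$(\sigma_{\theta})_n$ regular representation is faithful. The candidate Dirac operator $D_{p,n} = M_{\nlength \circ \upsilon_n}$ is multiplication by a real-valued function admitting the delta functions as an orthonormal eigenbasis with real eigenvalues, hence is self-adjoint on its natural domain.

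For (ST1), properness of $\nlength$ makes each ball $B_{\nlength}(R)$ finite, so the same $\varepsilon$-argument as in Theorem~\ref{thm:ncsolenoidspectraltriple}---approximating $(D_{p,n} - it\,\mathrm{Id})^{-1}$ in operator norm by the finite-rank projection supported on $B_{\nlength}(R)$---shows that $D_{p,n}$ has compact resolvent. For (ST2), the key inequality~\eqref{inequal:normcommutator} applied to the group $\Gamma_n$ bounds $\|[D_{p,n}, \lambda_{(\sigma_{\theta})_n}(f)]\|$ by $\sum_{\gamma \in \Gamma_n} \nlength(\gamma)|f(\gamma)|$ for $f \in C_C(\Gamma_n, (\sigma_{\theta})_n)$. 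Since $\upsilon_n$ carries $C_C(\mathbb{Z}^2, \sigma_{\theta_{2n}})$ onto $C_C(\Gamma_n, (\sigma_{\theta})_n)$, it follows that $[D_{p,n}, \pi_{\theta_{2n}}(a)]$ extends to a bounded operator for every $a \in C_C(\mathbb{Z}^2, \sigma_{\theta_{2n}})$, a dense $*$-subalgebra of $C^*(\mathbb{Z}^2, \sigma_{\theta_{2n}})$. Equivalently, one may invoke Proposition~3.3 of \cite{Long-Wu2} directly, exactly as in Theorem~\ref{thm:ncsolenoidspectraltriple}.

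I do not anticipate a genuine obstacle here; the argument is in essence a bookkeeping exercise. The one point demanding care is the consistent use of $\upsilon_n$ in its two guises---as a $C^*$-algebra isomorphism and as the underlying group isomorphism---so that the transported Dirac operator, representation, and finitely supported subalgebra all match correctly. Because $\mathbb{Z}^2$ is finitely generated, this finite level in fact lies within the reach of Rieffel's original framework (cf.\ Example~\ref{example_spectraltriplegroupalg}); the genuine content is the particular choice of the restricted length function $\nlength \circ \upsilon_n$, which is precisely what will allow these triples to assemble into an inductive system compatible with the solenoid triple of Theorem~\ref{thm:ncsolenoidspectraltriple}.
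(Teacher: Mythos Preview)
Your proposal is correct and matches the paper's approach exactly: the paper does not give a separate proof for this proposition but simply remarks, just before the statement, that since $\mathbb{L}_{p,n}^{\Sigma}\circ\upsilon_n$ is of bounded doubling (by Corollary~\ref{cor:restriction length function sum}), ``the same kinds of arguments as used in the proof of Theorem~\ref{thm:ncsolenoidspectraltriple} can be applied to demonstrate that $D_{p,n}$ is a Dirac operator.'' You have faithfully unpacked that remark, and your careful attention to the dual role of $\upsilon_n$ and to the faithfulness of $\pi_{\theta_{2n}}$ in fact supplies more detail than the paper itself.
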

We next consider the topology induced by $\text{mk}_{L_{D_{p,n}}}$ on $\mathcal{S}(C^*(\mathbb{Z}^2, \sigma_{\theta_{2n}})),$ where $L_{D_{p,n}}$ is the seminorm on $C_C(\mathbb{Z}^2, \sigma_{\theta_{2n}})$ induced by the Dirac operator $D_{p,n}.$
When equipped with the weak$^*$-topology, the state space of a unital $C^*$-algebra is compact and second countable.  The same argument given in the proof of Theorem \ref{thm:ncsolenoidqcms} and Corollary \ref{cor:restriction length function sum} will still hold here, and  the discussion of Remark \ref{rmk:uniformlyeqmetrics} applies to show that length functions on $\mathbb{Z}^2$  that are of bounded doubling will give rise to uniformly equivalent metrics on  $\mathcal{S}(C^*(\mathbb{Z}^2, \sigma_{\theta_{2n}})),$ since they both determine the same compact weak-$\ast$ topology on $\mathcal{S}(C^*(\mathbb{Z}^2, \sigma_{\theta_{2n}}))$.  Therefore we obtain:
\begin{prop}\label{thm:quantumtorusqcms}
Fix a prime $p$ and $n \in \mathbb{N}$.  For every $\theta \in \Omega_p$, $(C^*(\mathbb{Z}^2, \sigma_{\theta_{2n}}), L_{D_{p,n}})$ is a Leibniz quantum compact  metric space.   
\end{prop}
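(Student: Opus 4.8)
The plan is to reproduce, for the rotation algebra, the short argument already used to prove Theorem~\ref{thm:ncsolenoidqcms}, transported through the $\ast$-isomorphism $\upsilon_n$. The single new input has in fact already been recorded in the paragraph preceding the statement: the length function $\mathbb{L}_{p,n}^{\Sigma}\circ\upsilon_n$ on $\mathbb{Z}^2$ is of bounded doubling, a consequence of Corollary~\ref{cor:restriction length function sum} once one observes that the group isomorphism $\upsilon_n$ carries each ball $B_{\mathbb{L}_{p,n}^{\Sigma}}(R)$ bijectively onto $B_{\mathbb{L}_{p,n}^{\Sigma}\circ\upsilon_n}(R)$, so that the counting functions, and hence the doubling constant $(4p^8)^4$, are literally unchanged. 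With bounded doubling in hand, the verification of Definition~\ref{def:fredLeibqcms} proceeds exactly as in the solenoid case.

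First I would note that the spectral triple $(C^*(\mathbb{Z}^2,\sigma_{\theta_{2n}}),\ell^2(\Gamma_n),D_{p,n})$ established just above ensures that $L_{D_{p,n}}$ is a well-defined seminorm on the dense $\ast$-subalgebra $C_C(\mathbb{Z}^2,\sigma_{\theta_{2n}})$ via Equation~\eqref{grouplengthseminorm}. Conditions (1) and (2) of Definition~\ref{def:qcms}---that $L_{D_{p,n}}$ is a Lip-norm, so that its kernel is the scalars and $\text{mk}_{L_{D_{p,n}}}$ metrizes the weak* topology on the state space---then follow from the twisted version, due to Long and Wu \cite{Long-Wu2}, of the theorem of Christ and Rieffel, whose hypothesis is precisely the bounded doubling of $\mathbb{L}_{p,n}^{\Sigma}\circ\upsilon_n$; this is the content alluded to in Remark~\ref{rmk:uniformlyeqmetrics}. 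For the remaining two conditions I would cite Propositions 6.4 and 6.9 of \cite{Long-Wu2}: bounded doubling yields that $L_{D_{p,n}}$ satisfies the ordinary Leibniz property and that it is lower semicontinuous with respect to the $C^*$-norm, the latter being precisely condition (4). Finally, Latr\'emoli\`ere's Proposition 2.17 in \cite{La} promotes the ordinary Leibniz property to the Leibniz inequality of Jordan-Lie type demanded in condition (3), completing the verification.

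I do not expect a genuine obstacle here: every ingredient is already assembled, and the statement is simply the rotation-algebra shadow of Theorem~\ref{thm:ncsolenoidqcms}. The only point meriting explicit attention---and it is routine---is the transport of bounded doubling across $\upsilon_n$ noted above; because $\upsilon_n$ is a group isomorphism it preserves the relevant ball cardinalities identically, so no new estimate is required and the doubling constant transfers without change.
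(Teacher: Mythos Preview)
Your proposal is correct and follows essentially the same route as the paper: the paper does not give a separate proof for this proposition but states in the paragraph immediately preceding it that ``the same argument given in the proof of Theorem~\ref{thm:ncsolenoidqcms} and Corollary~\ref{cor:restriction length function sum} will still hold here,'' which is exactly what you carry out---bounded doubling of $\mathbb{L}_{p,n}^{\Sigma}\circ\upsilon_n$ transported via the group isomorphism, then the citations to Propositions~6.4 and~6.9 of \cite{Long-Wu2} and to Proposition~2.17 of \cite{La}. Your explicit remark that $\upsilon_n$ preserves ball cardinalities is a detail the paper leaves implicit, but otherwise the arguments coincide.
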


\subsection{Spectral Triples on Noncommutative Solenoids Produced as Limits of Spectral Triples on Noncommutative Tori}

We now use the set-up of the previous section to build a countable inductive system of spectral triples for rotation algebras, in the sense of Floricel and Ghorbanpour (see Definition \ref{def:FGinductivelim}), which will satisfy the hypotheses of Theorems \ref{mainthm-FG}	and \ref{mainthm2-FG}.  Fix a prime $p$.  Given $\theta \in \Omega_p$, recall the construction of $\Big\{ (C^*(\mathbb{Z}^2, \sigma_{\theta_{2n}}) \Big\}_{n \in \mathbb{N}}$, where the multiplier  $\sigma_{\theta}$ on $\mathbb{Z}\Big[\frac{1}{p}\Big] \times \mathbb{Z}\Big[\frac{1}{p}\Big]$ is chosen to construct the noncommutative solenoid $\mathcal{A}_{\theta}^{\mathcal S}$. 

Again, throughout this subsection, denote $\mathbb{Z}\Big[\frac{1}{p}\Big]\times\mathbb{Z}\Big[\frac{1}{p}\Big]$ by $\Gamma.$ For each $n\in \mathbb N,$ denote the subgroup $\frac{1}{p^n} \mathbb{Z}\times \frac{1}{p^n} \mathbb{Z}$ of $\Gamma$ by $\Gamma_n.$  Since  $\Gamma_n$ embeds into $\Gamma_{n+1}$ via inclusion maps and $(\sigma_{\theta})_n$ is the restriction of $\sigma_{\theta}$ from $\Gamma$ to  $\Gamma_n$, there exist, for every $j,k \in \mathbb{N}$, $j \leq k$,  unital $\ast$-monomorphisms  
$$\beta_{j,k}: C^*(\Gamma_j, (\sigma_{\theta})_j) \to  C^*(\Gamma_k,(\sigma_{\theta})_k).$$
Let $\phi_{j,k}: C^*(\mathbb{Z}^2, \sigma_{\theta_{2j}}) \to  C^*(\mathbb{Z}^2, \sigma_{\theta_{2k}})$ be the map given by the $^*$-monomorphism
$$ \phi_{j,k} = \upsilon_{k}^{-1} \circ \beta_{j,k} \circ \upsilon_{j}$$
and 
$$I_{j,k}: H_j \to  H_k$$
be the isometry given by the inclusion map, where, as in previous sections, $\upsilon_j$ is the $\ast$-isomorphism between $C^*(\mathbb{Z}^2, \sigma_{\theta_{2j}})$ and $C^*(\Gamma_j, (\sigma_{\theta})_j ),$ and we use the notation $H_j=\ell^2(\Gamma_j),\;j\in\mathbb N.$  Since $C_C(\mathbb{Z}^2, \sigma_{{2j}})$ is equal to $\upsilon_{j}^{-1}\Big(C_C(\Gamma_j, (\sigma_{\theta})_j)\Big)$ and $\beta_{j,k}\Big(C_C(\Gamma_j, (\sigma_{\theta})_j)\Big) \subseteq C_C(\Gamma_k, (\sigma_{\theta})_k)$,
\begin{equation}
	\label{eq:inclusiondirlim}
\phi_{j,k}(C_C(\mathbb{Z}^2, \sigma_{\theta_{2j}}) \subseteq C_C(\mathbb{Z}^2, \sigma_{\theta_{2k}}).
\end{equation}
Moreover, for every $f \in C^*(\mathbb{Z}^2, \sigma_{\theta_{2j}})$:
$$I_{j,k}\circ \pi_{\theta_{2j}}(f) = \pi_{\theta_{2k}}(\phi_{j,k}(f))\circ  I_{j,k}.$$
 By definition, $\mathbb{L}_{p,n}^{\Sigma}$ agrees with $\mathbb{L}_p^{\Sigma}$ restricted to $\Gamma_n$ for each $n \in \mathbb{N}$, so that our choice of $I_{j,k}$ yields  $I_{j,k}(\text{dom}(M_{\mathbb{L}_{p,j}^{\Sigma}})) \subseteq \text{dom}(M_{\mathbb{L}_{p,k}^{\Sigma}})$.  In particular, 
$$I_{j,k}D_{p,j} = D_{p,k} I_{j,k}.$$  Note that by construction, $\phi_{k,l} \phi_{j,k} = \phi_{j,l}$ and $I_{k,l} I_{j,k} = I_{j,l}$ for all $j,k,l \in \mathbb{N}, j \leq k \leq l$.  Therefore, 
\begin{equation*}
    \{ \, (C^*(\mathbb{Z}^2, \sigma_{\theta_{2j}}), H_j),\; (\phi_{j,k}, I_{j,k}) \, \}_{j \in \mathbb{N}}\;\text{with associated smooth subalgebras}\;\{ C_C(\mathbb{Z}^2, \sigma_{\theta_{2j}}) \}_{j \in \mathbb{N}},
\end{equation*} 
is a countable inductive system of spectral triples (see Definitions \ref{def:FGmorphism}, \ref{def:FGinductivelim}, \ref{def:FGspectraltriple}).  

We now are in a position to deduce that the spectral triples for noncommutative solenoids constructed in Theorem \ref{thm:ncsolenoidspectraltriple} can each be obtained as the inductive realization of a countable inductive system of finite-dimensional spectral triples.  For the inductive system of Hilbert spaces, take $\Big\{\Big(H_j=\ell^2(\Gamma_j), I_{j,k} \Big)  \Big\}_{j \in \mathbb{N}}$.  Then
$$\ell^2(\Gamma) = \lim_{\substack{\longrightarrow \\ j \in J}} H_j.$$
By the way that the morphisms $\phi_{j,k}$ are defined, the inductive limit of the inductive system of $C^*$-algebras $\{ (C^*(\mathbb{Z}^2, \sigma_{\theta_{2j}}), \phi_{j,k}) \}_{j \in \mathbb{N}}$ is $^*$-isomorphic to that of $\Big\{ \Big(C^*(\Gamma_j, (\sigma_{\theta})_j), \beta_{j,k}\Big) \Big\}_{j \in \mathbb{N}}$.  The inductive limit of the second inductive system is $C^*(\Gamma, \sigma_{\theta})=\mathcal{A}_{\theta}^{\mathcal S},$ with associated smooth subalgebra $C_C(\Gamma, \sigma_{\theta}).$  Consequently, $\varinjlim_{j\in\N} C^*(\mathbb{Z}^2, \sigma_{\theta_{2j}})$ is also $^*$-isomorphic to $\mathcal{A}_{\theta}^{\mathcal S}$ with associated smooth subalgebra $C_C(\Gamma, \sigma_{\theta}).$ We denote this $^*$-isomorphism by $\rho:\varinjlim_{j\in\N} C^*(\mathbb{Z}^2, \sigma_{\theta_{2j}}) \to \mathcal{A}_{\theta}^{\mathcal S}.$ We calculate that at the level of representations on $\varinjlim_{j\in\N} C^*(\mathbb{Z}^2, \sigma_{\theta_{2j}}),$
$$\pi_{\theta} = \lambda_{\sigma_{\theta}}\circ \rho = \lim_{\substack{\longrightarrow \\ j \in J}} \lambda_{\sigma_{\theta_{2j}}} \circ \upsilon_{j}=  \lim_{\substack{\longrightarrow \\ j \in J}} \pi_{\theta_{2j}}.  $$
Thus we obtain a representation of $\varinjlim_{j\in\N} C^*(\mathbb{Z}^2, \sigma_{\theta_{2j}})\cong\mathcal{A}_{\theta}^{\mathcal S}$ that is an inductive limit of the family of representations $\{ \pi_{\theta_{2j}} \}_{j \in \mathbb{N}}$ associated with the family of spectral triples $\{ (C^*(\mathbb{Z}^2, \sigma_{\theta_{2j}}), H_j, D_{p,j}) \}_{j \in \mathbb{N}}$. By its very definition, the $\ast$-isomorphism $\rho:\varinjlim_{j\in\N} C^*(\mathbb{Z}^2, \sigma_{\theta_{2j}}) \to \mathcal{A}_{\theta}^{\mathcal S}$ intertwines the $C^*$-algebra representation pairs $(\varinjlim_{j\in\N} C^*(\mathbb{Z}^2, \sigma_{\theta_{2j}}),\;\pi_{\theta})$ and $(C^*(\Gamma, \sigma_{\theta}), \lambda_{\sigma_{\theta}}).$  For each fixed $j \in \mathbb{N}$, set $I_j :H_j \to H=\ell^2 (\Gamma)$ equal to $\displaystyle{\lim_{\substack{\longrightarrow \\ k \in J}}} \, I_{j,k}$ and $\phi_j: C^*(\mathbb{Z}^2, \sigma_{\theta_{2j}}) \to \mathcal{A}_{\theta}^{\mathcal S}$ equal to $\displaystyle{\lim_{\substack{\longrightarrow \\ k \in J}}} \, \beta_{j,k}\circ \upsilon_{j}.$  The combination of these choices yield $\pi_{\theta}$ as the unique representation of $\varinjlim_{j\in\N} C^*(\mathbb{Z}^2, \sigma_{\theta_{2j}})$ on $\ell^2(\Gamma)$ such that for every $f \in C^*(\mathbb{Z}^2, \theta_{2j})$,
$$ \pi_{\theta}(\phi_j(f))\circ I_j = I_j\circ  \pi_{\theta_{2j}}(f).$$
To see that $\mathcal{D}_p = \displaystyle{\lim_{\substack{\longrightarrow \\ j \in J}}} \, D_{p,j}$, observe that $\mathcal{D}_p$ agrees with $D_{p,j}$ on  $\ell^2 (\Gamma_j)$.  Hence $(\mathcal{A}_{\theta}^{\mathcal S}, \ell^2(\Gamma), \mathcal{D}_p)$ is the inductive realization of our inductive system of spectral triples for rotation algebras.  

Since noncommutative solenoids are inductive limit $C^*$-algebras, our spectral triples for noncommutative solenoids can be viewed as ``inductive limit spectral triples'' on inductive limit $C^*$-algebras.    Because all of the test cases examined by Floricel and Ghorbanpour involve countable inductive systems of finite-dimensional spectral triples whose inductive realizations are spectral triples for $AF$-algebras, our spectral triples supply important new examples where Floricel and Ghorbanpour's results can be applied.  In particular, our spectral triples for noncommutative solenoids, when viewed as inductive realizations of countable inductive systems of spectral triples for rotation algebras,  satisfy the requirements for spectral triple structure given in Theorems 3.1 and 3.2 of \cite{FlGh}, as we show in the following theorem.

\begin{thm}\label{thm:FGinductivelim}
Fix a prime $p$.  Let $\Gamma = \mathbb{Z} \Big[\frac{1}{p}\Big] \times \mathbb{Z} \Big[\frac{1}{p}\Big]$ and for each $n \in \mathbb{N}$, set $\Gamma_n = \frac{1}{p^n} \mathbb{Z} \times \frac{1}{p^n} \mathbb{Z}$.  For every $\theta \in \Omega_p$, the triple $(\mathcal{A}_{\theta}^{\mathcal S}, \ell^2(\Gamma), \mathcal{D}_p)$ with associated smooth subalgebra $C_C(\Gamma,\sigma_{\theta})$ can be written as the inductive realization of 
\begin{equation*}
    \{ \, (C^*(\mathbb{Z}^2, \sigma_{\theta_{2j}}), \ell^2(\Gamma_j), D_{p,j}), (\phi_{j,k}, I_{j,k}) \, \}_{j \in \mathbb{N}},\;\text{with associated smooth subalgebras}\;\{(C_C(\mathbb{Z}^2, \sigma_{\theta_{2j}})\}_{j \in \mathbb{N}}.
\end{equation*} 
Furthermore, the above inductive system of spectral triples satisfies the hypotheses of Floricel and Ghorbanpour's Theorems \ref{mainthm-FG} and \ref{mainthm2-FG}, so that the inductive realization  $(\mathcal{A}_{\theta}^{\mathcal S}, \ell^2(\Gamma), \mathcal{D}_p)$ of the inductive system is itself a spectral triple, with associated smooth subalgebra $C_C(\Gamma, \sigma_{\theta}).$ 
\end{thm}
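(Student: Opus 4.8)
The plan is to treat the two assertions separately: first, that $(\mathcal{A}_{\theta}^{\mathcal S}, \ell^2(\Gamma), \mathcal{D}_p)$ is the inductive realization of the displayed system, and second, that the hypotheses of Theorems \ref{mainthm-FG} and \ref{mainthm2-FG} are met so that this realization is a genuine spectral triple. The first assertion is essentially assembled from the identifications established in the paragraphs preceding the theorem: the $C^*$-inductive limit of $\{C^*(\mathbb{Z}^2, \sigma_{\theta_{2j}}), \phi_{j,k}\}$ is $^*$-isomorphic (via $\rho$) to $\mathcal{A}_{\theta}^{\mathcal S}$ with smooth subalgebra $C_C(\Gamma, \sigma_\theta)$; the Hilbert space limit of $\{H_j = \ell^2(\Gamma_j), I_{j,k}\}$ is $\ell^2(\Gamma)$; the representation $\pi_\theta$ is the inductive limit of the $\pi_{\theta_{2j}}$; and $\mathcal{D}_p$ restricts to $D_{p,j}$ on each $\ell^2(\Gamma_j)$, so that $\mathcal{D}_p = \lim_j D_{p,j}$ in the sense of Definition \ref{def:FGspectraltriple}. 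I would simply record these facts and invoke the definition of inductive realization.

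For condition (ST1) I would apply Theorem \ref{mainthm-FG}, verifying its condition (iii) for a single $\lambda \in \mathbb{C} \setminus \mathbb{R}$. Here $I_n^*$ is the orthogonal projection of $\ell^2(\Gamma)$ onto $\ell^2(\Gamma_n)$, and since both $\mathcal{D}_p$ and $D_{p,n}$ act diagonally on delta functions with eigenvalues given by $\mathbb{L}_p^{\Sigma}$ (agreeing on $\Gamma_n$), the operator $R_\lambda(\mathcal{D}_p) - I_n R_\lambda(D_{p,n}) I_n^*$ is diagonal, vanishing on $\{\delta_\gamma : \gamma \in \Gamma_n\}$ and acting as $(\mathbb{L}_p^{\Sigma}(\gamma) - \lambda)^{-1}$ on $\{\delta_\gamma : \gamma \notin \Gamma_n\}$. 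Hence its norm equals $\sup_{\gamma \notin \Gamma_n} |\mathbb{L}_p^{\Sigma}(\gamma) - \lambda|^{-1}$. The key estimate is that $\gamma = (\gamma_1, \gamma_2) \notin \Gamma_n$ forces some coordinate $\gamma_i = a/p^k$ with $\gcd(a,p) = 1$ and $k > n$, so $\|\gamma_i\|_p = p^k \geq p^{n+1}$ and therefore $\mathbb{L}_p^{\Sigma}(\gamma) \geq p^{n+1}$ (exactly the kind of bound used in the proof of Lemma \ref{lemma:baselengthfcn}). Consequently the norm is at most $(p^{n+1} - |\lambda|)^{-1} \to 0$, giving uniform convergence and hence compact resolvent, consistent with Theorem \ref{thm:ncsolenoidspectraltriple}.

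For condition (ST2) I would apply Theorem \ref{mainthm2-FG}: it suffices to show that for each $j$ and each $a \in C_C(\mathbb{Z}^2, \sigma_{\theta_{2j}})$ the family $\{[D_{p,k}, \pi_{\theta_{2k}}(\phi_{j,k}(a))]\}_{k \geq j}$ is uniformly bounded. Writing $f = \upsilon_j(a) \in C_C(\Gamma_j, (\sigma_\theta)_j)$, one has $\pi_{\theta_{2k}}(\phi_{j,k}(a)) = \lambda_{(\sigma_\theta)_k}(\beta_{j,k}(f))$, where $\beta_{j,k}(f)$ is $f$ viewed as a finitely supported function on $\Gamma_k$ still supported in $\Gamma_j$. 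Applying the key inequality \eqref{inequal:normcommutator} on $\ell^2(\Gamma_k)$ gives
\[
\|[D_{p,k}, \lambda_{(\sigma_\theta)_k}(\beta_{j,k}(f))]\| \leq \sum_{\gamma \in \Gamma_j} \mathbb{L}_{p,k}^{\Sigma}(\gamma)\,|f(\gamma)| = \sum_{\gamma \in \Gamma_j} \mathbb{L}_{p,j}^{\Sigma}(\gamma)\,|f(\gamma)|,
\]
the last equality holding because $\mathbb{L}_{p,k}^{\Sigma}$ and $\mathbb{L}_{p,j}^{\Sigma}$ both restrict $\mathbb{L}_p^{\Sigma}$ and hence agree on $\Gamma_j$. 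This bound is finite and independent of $k$, so the family is uniformly bounded and Theorem \ref{mainthm2-FG} yields boundedness of $[\mathcal{D}_p, \pi_\theta(\phi_j(a))]$; since the images $\phi_j(C_C(\mathbb{Z}^2, \sigma_{\theta_{2j}}))$ exhaust $C_C(\Gamma, \sigma_\theta) = \bigcup_j C_C(\Gamma_j, (\sigma_\theta)_j)$, condition (ST2) holds on the stated smooth subalgebra.

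The main obstacle is the (ST1) step: one must produce the right quantitative decay of the truncated resolvents. The crucial point is not the diagonal bookkeeping but the uniform $p$-adic growth estimate $\inf_{\gamma \notin \Gamma_n} \mathbb{L}_p^{\Sigma}(\gamma) \geq p^{n+1}$, which is exactly what makes the complements of the $\Gamma_n$ escape to infinity in the length metric and forces norm (rather than merely strong) convergence of $I_n R_\lambda(D_{p,n}) I_n^*$ to $R_\lambda(\mathcal{D}_p)$. By contrast, the (ST2) step is routine once \eqref{inequal:normcommutator} is in hand, since the embeddings $\beta_{j,k}$ preserve both supports and length-function values.
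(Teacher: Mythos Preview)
Your proposal is correct, and the (ST2) argument is essentially identical to the paper's: both invoke inequality \eqref{inequal:normcommutator} and the fact that every $\mathbb{L}_{p,k}^{\Sigma}$ restricts $\mathbb{L}_p^{\Sigma}$, hence agrees with $\mathbb{L}_{p,j}^{\Sigma}$ on $\Gamma_j$, to obtain a $k$-independent bound.

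For (ST1), both you and the paper verify condition (iii) of Theorem \ref{mainthm-FG}, and both ultimately rest on the same fact that $\gamma \notin \Gamma_n$ forces $\mathbb{L}_p^{\Sigma}(\gamma)$ to be large. Your route is more direct: you observe that $R_\lambda(\mathcal{D}_p) - I_n R_\lambda(D_{p,n}) I_n^*$ is diagonal, supported on $\Gamma \setminus \Gamma_n$, read off its norm as $\sup_{\gamma \notin \Gamma_n} |\mathbb{L}_p^{\Sigma}(\gamma) - \lambda|^{-1}$, and then use the explicit $p$-adic lower bound $\mathbb{L}_p^{\Sigma}(\gamma) \geq p^{n+1}$. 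The paper instead fixes $\epsilon$, chooses $R$ so that $z > R$ makes $|z - it|^{-1} < \epsilon$, invokes properness to see that the finite ball $B_{\mathbb{L}_p^{\Sigma}}(R)$ lies in some $\Gamma_N$, and interposes a finite-rank truncation onto that ball together with a triangle inequality, arriving at a $2\epsilon$ bound. Your diagonal computation bypasses this detour; the paper's version has the minor advantage of isolating properness of $\mathbb{L}_p^{\Sigma}$ (rather than the sharp bound $p^{n+1}$) as the operative hypothesis, so it would transfer verbatim to any proper length function on an increasing union of subgroups. The underlying mechanism is the same in both arguments.
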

\begin{proof}
To show that $\mathcal{D}_p$, as the inductive limit of the family of operators $\{ D_{p,j} \}_{j \in \mathbb{N}}$, has compact resolvent, fix $t \in \mathbb{R} \setminus \{ 0 \}$.  By Theorem 3.1 of $\cite{FlGh}$, it suffices to show that the sequence of operators $\{ I_j(D_{p,j} - it)^{-1}I_j^* \}_{j \in \mathbb{N}}$ converges in norm to $(\mathcal{D}_p-it)^{-1}$.  Choose an $\epsilon >0$.  Then there exists $R>0$ such that for $z \in \mathbb{R}$, $z > R$ implies $|z - it|^{-1} < \epsilon$.  Given such an $R$, define $B_{\mathbb{L}_p^{\Sigma}}(R)$ as in part (1) of Definition \ref{def:lengthfcnbddoubl}, where we recall that $\mathbb{L}_p^{\Sigma}$ was defined in Corollary \ref{cor:restriction length function sum}.  As in the proof of Theorem \ref{thm:ncsolenoidspectraltriple}, $\mathbb{L}_p^{\Sigma}$ is proper implies $B_{\mathbb{L}_p^{\Sigma}}(R)$ is a finite set.  Let $\{ \gamma_k \}_{1 \leq k \leq m}$ be an enumeration of the elements in this set.  For each $1 \leq k \leq m$, find $n_k \in \mathbb{N}$ and $a_k \in \mathbb{Z}$ such that $\gamma_k = \frac{a_k}{p^{n_k}}$ with $\gcd(a_k, p) = 1$.  Set $N = \max \{ n_1, \cdots, n_m \}$ and define $B_{\mathbb{L}_{p}^{\Sigma}}(R)$ be the ball of radius $R$ in the length metric in $\Gamma$ as in part (1) of Definition \ref{def:lengthfcnbddoubl}.  Note that $B_{\mathbb{L}_p^{\Sigma}}(R) \subset \Gamma_N$.  For each $1 \leq k \leq m$, now let $P_k$ denote the orthogonal projection taking $\ell^2(\Gamma_N)$ onto the one-dimensional subspace spanned by $\delta_{\gamma_k}$.  Then $I_N P_k I_N^*$ describes the orthogonal projection taking  $\ell^2(\Gamma)$ onto the one-dimensional subspace spanned by $\delta_{\gamma_k}$ viewed as an element of $\ell^2(\Gamma)$.  Since each $\delta_{\gamma_k}$ is an eigenvector of $(\mathcal{D}_p - it)^{-1}$ with eigenvalue $(\mathbb{L}_p^{\Sigma}(\gamma_k) - it)^{-1}$, 
$j > N$ implies, by the triangle inequality,

\[
\begin{split}
\| \, (\mathcal{D}_p-it)^{-1} - &I_j(D_{p,j}-it)^{-1}I_j^* \, \|_{{\mathcal{B}}(\ell^2(\Gamma))}\; \\
 &\leq \Big\| \, (\mathcal{D}_p - it)^{-1} - I_N \Big(\sum_{\gamma_k \in B_{\mathbb{L}_p^{\Sigma}}(R)}(\mathbb{L}_p^{\Sigma}(\gamma_k)-it)^{-1}P_k\Big)I_N^* \, \Big\|_{{\mathcal{B}}(\ell^2(\Gamma))} \qquad \qquad \qquad \qquad \qquad \qquad\\
&+  \Big\| \, I_N \Big(\sum_{\gamma_k \in B_{\mathbb{L}_p^{\Sigma}}(R)}(\mathbb{L}_p^{\Sigma}(\gamma_k)-it)^{-1}P_k\Big)I_N^* - I_j(D_{p,j}-it)^{-1}I_j^* \, \Big\|_{{\mathcal{B}}(\ell^2(\Gamma))}\\
&= \Big\| \, (\mathcal{D}_p - it)^{-1} \quad - \sum_{\gamma_k \in B_{\mathbb{L}_p^{\Sigma}}(R)}(\mathbb{L}_p^{\Sigma}(\gamma_k)-it)^{-1}I_N P_k I_N^* \, \Big\|_{{\mathcal{B}}(\ell^2(\Gamma))}\qquad \qquad \qquad \qquad \qquad \qquad\\  
&+  \Big\| \, I_j \Big(I_{N,j} \Big(\sum_{\gamma_k \in B_{\mathbb{L}_p^{\Sigma}}(R)}(\mathbb{L}_p^{\Sigma}(\gamma_k)-it)^{-1}P_k\Big)I_{N, j}^* - (D_{p,j}-it)^{-1}\Big)I_j^* \, \Big\|_{{\mathcal{B}}(\ell^2(\Gamma))}\\
&= \Big\| \, (\mathcal{D}_p - it)^{-1} \quad - \sum_{\gamma_k \in B_{\mathbb{L}_p^{\Sigma}}(R)}(\mathbb{L}_p^{\Sigma}(\gamma_k)-it)^{-1}I_N P_k I_N^* \, \Big\|_{{\mathcal{B}}(\ell^2(\Gamma))}\qquad \qquad \qquad \qquad \qquad \qquad\\ 
&+\Big\| \, \sum_{\gamma_k \in B_{\mathbb{L}_p^{\Sigma}}(R)}(\mathbb{L}_p^{\Sigma}(\gamma_k)-it)^{-1} I_{N,j} P_k I_{N, j}^* - (D_{p,j}-it)^{-1}\Big) \, \Big\|_{{\mathcal{B}}(\ell^2(\Gamma))}\\
&=\sup \left\{ |\mathbb{L}_p^{\Sigma}(\gamma) - it|^{-1} : \gamma \notin B_{\mathbb{L}_p^{\Sigma}}(R)  \right\}  +  \sup \left\{ |\mathbb{L}_p^{\Sigma}(\gamma) - it|^{-1} : \gamma \notin B_{\mathbb{L}_p^{\Sigma}}(R)  \right\} < 2 \epsilon, 
\end{split}
\]
where the bound on the second term of the last equality holds because each $\delta_{\gamma_k}$ is also an eigenvector of $D_{p,j}$ with eigenvalue $(\mathbb{L}_p^{\Sigma}(\gamma) - it)^{-1}$ if $j > N$.  As the choice of $\epsilon$ was arbitrary, the sequence $\{ I_j(D_{p,j} - it)^{-1}I_j^* \}_{j \in \mathbb{N}}$ converges uniformly to $(\mathcal{D}_p-it)^{-1}$.  Hence the inductive realization of $\{ \, (C^*(\mathbb{Z}^2, \sigma_{\theta_{2j}}), \ell^2(\Gamma_j), D_{p,j}), (\phi_{j,k}, I_{j,k}) \, \}_{j \in \mathbb{N}}$ satisfies condition (ST1) in Definition \ref{def:spectraltriple} for a Dirac operator.  

To see that $\mathcal{D}_p$, as the inductive limit of the family of operators $\{ D_{p,j} \}_{j \in \mathbb{N}}$, satisfies the remaining denseness condition given in (ST2) of Definition \ref{def:spectraltriple}, fix $J \in \mathbb{N}$ and a choice of $g \in C_C(\mathbb{Z}^2, \theta_{2J})$.  By Theorem \ref{mainthm2-FG}, it suffices to show that the family of commutators $\{ \, [D_{p,k}, \, \pi_{\theta_{2k}}(\phi_{J,k}(g))] \, \}_{k \geq J}$ is uniformly bounded. The key norm inequality for commutators in Equation \eqref {inequal:normcommutator} shows that for all $g\in C_C(\mathbb{Z}^2, \theta_{2J}),$ 
\[
\| \, [D_{p,k}, \, \pi_{\theta_{2k}}(\phi_{J,k}(g))] \, \|_{{\mathcal B}(H_k)}\;=\;\| \, [D_{p,k}, \, \lambda_{\sigma_{\theta_{2k}}}\circ \upsilon_k(\phi_{J,k}(g))] \, \|_{{\mathcal B}(H_k)}\leq\;\sum_{\gamma\in \Gamma_k }\mathbb{L}_{p,k}^{\Sigma}(\gamma)|\upsilon_k(\phi_{J,k}(g))(\gamma)|.
\]
Since $\mathbb{L}_{p,k}^{\Sigma}$ agrees with $\mathbb{L}_{p,J}^{\Sigma}$ on $\Gamma_J \subseteq \Gamma_k$ for each $k \geq J$, and since both agree with the restriction of $\mathbb{L}_{p,}^{\Sigma}$ to the respective subgroups, we obtain 
$$\| \, [D_{p,k}, \, \pi_{\theta_{2k}}(\phi_{J,k}(g))] \, \|_{{\mathcal B}(H_k)}\; \leq\; \sum_{\gamma\in\Gamma}\mathbb{L}_{p}^{\Sigma}(\gamma)|\phi_{J}(g)(\gamma)|,\;\forall\;k\geq J$$
which, in combination with the observation that $g\in C_C(\mathbb{Z}^2, \theta_{2J})$ implies $\phi_J(g)\in\;C_C(\Gamma_J, (\sigma_{\theta})_J)\;\subset\; C_C(\Gamma, \sigma_{\theta}),$ proves the desired uniform boundedness condition.
\end{proof}

\begin{rmk} 
If we had tried to take direct limits of Dirac operators arising from the standard length function on $\mathbb Z^2$ and $\Gamma_j=\frac{1}{p^j}\mathbb Z^2$ as described in Example \ref{example_spectraltriplegroupalg}, we would not have obtained this result, because some of the standard lengths of non-zero elements in $\Gamma_j$ go to $0$ as $j\to +\infty.$  This would have caused major problems if we tried to verify condition (ST1).  Fortunately, our adjusted lengths on $\Gamma_j$ are bounded from below by $1$ on non-zero elements.
\end{rmk}

\section{A Dense Subalgebra of Twisted Abelian Group $C^*$-Algebras with Spectral Invariance}

We will now examine dense subdomains associated to  our  spectral triples which arise from our Dirac operators in greater detail.  Notable properties to be discussed include identification, for fixed prime $p$ and $\theta \in \Omega_p$, of a dense $^*$-subalgebra of $\mathcal{A}_{\theta}^{\mathcal S}$ having the property of spectral invariance in the sense of L. Schweitzer \cite{Sch}, on which $L_{\mathcal{D}_p}$ is finite. 
For similar $\ast$-subalgebras in the $\sigma-$twisted rapid decay
case, see \cite{Mathai}. The existence of such a $^*$-subalgebra was conjectured in a question posed by Long and Wu at the conclusion of $\cite{Long-Wu2}$.  The existence of this additional condition on $L_{\mathcal{D}_p}$ was also described as ``useful'' by Christ and Rieffel in the discussion after Proposition 1.6 in \cite{Christ-Rieffel}.  To obtain such results, our efforts will rely heavily on methods from both noncommutative geometry and  noncommutative harmonic analysis. 

Our length function on the discrete abelian group $\Gamma = \mathbb{Z} \Big[\frac{1}{p}\Big] \times \mathbb{Z} \Big[\frac{1}{p}\Big]$ will be used to define a Fr\'echet $\ast$-algebra that is a dense $^*$-subalgebra having the property of spectral invariance in the twisted group $C^*$-algebra $C^*(\Gamma, \sigma_{\theta})$. Indeed, some of our results will apply to twisted group $C^*$-algebras associated to arbitrary countable discrete nilpotent groups.   As discussed by Christ and Rieffel in \cite{Christ-Rieffel}, subalgebras equipped with these properties also capture the $K$-theory of the $C^*$-algebra.  With this aim in mind, we recall:
\begin{defn}[Jolissaint, c.f. Definition 2.1 of \cite{jol}]\label{def:lengthfcnspace}
	Let $\Gamma$ be a countable discrete  group and $\sigma$ a multiplier on $\Gamma$.  Given a length function $\mathbb{L}$ on $\Gamma$, set \textbf{$H^{1,\infty}_{\mathbb L}(\Gamma,\sigma)$} equal to the {\bf space of complex-valued functions} $f$ on $\Gamma$  such that for each $s\geq 0,$ the function $f(1+{\mathbb L})^s$ belongs to $\ell^1(\Gamma, \sigma).$  For $f \in H^{1,\infty}_{\mathbb L}(\Gamma, \sigma )$ and $s\geq 0,$ define the norm $\| \cdot \|_{1,s,\mathbb{L}}: H^{1,\infty}_{\mathbb{L}}(\Gamma, \sigma) \to [0,\infty)$ by 
	$$\| f \|_{1,s,{\mathbb L}}\;=\;\sum_{\gamma \in \Gamma}|f(\gamma)|(1+{\mathbb L}(\gamma))^s.$$	
\end{defn}

\begin{rmk}\label{rm:mu}
	Notice that if we  set,  for fixed $q \in \N$ and for $\phi \in \ell^1(\Gamma, \sigma),$
	\begin{equation*}
	\mu_{q, \sigma}(\phi) = \sup_{N\in \N  }\left\{ N^q \, \| (1 - \chi_N) \phi \|_{\ell^1(\Gamma, \sigma)}\right\},
	\end{equation*}
	then the same proof  as  in  Lemma 2.2 of \cite{jol}  shows    that  $\phi \in H^{1, \infty}_{\mathbb{L}}(\Gamma, \sigma )$ if and only if $\mu_{q, \sigma}(\phi)\; <\; \infty,\;\forall q\in \mathbb N.$ 
	\end{rmk}
\begin{rmk}
Our definition is a modification of that given in \cite{jol}, which is specified for discrete group $C^*$-algebras rather than twisted discrete group $C^*$-algebras. Although the multiplier $\sigma$ does not play a role in the definition of the norm $\| \cdot \|_{1, s, \mathbb{L}}$, each such norm for $s \geq 0$ will be shown in the proof of Proposition \ref{prop:frechetalgebra} to exhibit  submultiplicativity with respect to the convolution product twisted by $\sigma$.  
\end{rmk}
\begin{rmk}\label{rmk:twisted Frechet algebra}
A Fr\'echet algebra which arises from a twisted group $C^*$-algebra and whose topology can be induced by a collection of seminorms which are each submultiplicative with respect to the twisted convolution product will be called a {\bf twisted Fr\'echet algebra}.  The collection of seminorms used in our definition of $H^{1,\infty}_{\mathbb{L}}(\Gamma, \sigma)$ happens to also be a collection of norms. 
\end{rmk}

We will demonstrate that $H^{1, \infty}_{\mathbb{L}}(\Gamma, \sigma_{}) \subset C^*(\Gamma, \sigma_{})$ is stable under the holomorphic functional calculus (see Theorem  \ref{prop:disc group multipl}).  We first introduce the notion of a spectral invariant subalgebra, as introduced by Schweitzer in \cite{Sch}.

\begin{defn} \cite{Sch}\label{def:SI}
Let $A$ be a unital *-subalgebra of a unital Banach algebra $B.$	We say that $A$ is a {\bf spectral invariant (SI) $\ast$-subalgebra} of $B$ if the invertible elements of $A$ are precisely those elements in $A$ that are invertible in $B.$ Note $A$ is SI in $B$ if and only if for every $a\in A,$ the spectrum of $a$ in $A$ is the same as the spectrum of $a$ in $B.$

\end{defn}

\begin{rmk}
	\label{rmk:Schweizer}
  As a consequence of Lemma 1.2 of Schweitzer in \cite{Sch}, a unital Fr\'echet $\ast$-subalgebra $A$ of a unital $C^*$-algebra {\bf $B$ is SI if and only if it is stable under the holomorphic functional calculus}, that is, if a function $f$ is holomorphic in a neighborhood of the spectrum of $a\in B,$ the element $f(a)$ of $B$ lies in $A$. This is important particularly when considering $K$-theoretic invariants of $B,$ because under these hypotheses, the $K$-theory of $A$ will be the same as the $K$-theory of $B.$
  \end{rmk}
  
 Working in the more general setting of countable  nilpotent groups, we will first apply ideas developed by Jolissaint in \cite{jol}, as well as techniques used by R. Ji in \cite{ji}, to show that $H^{1, \infty}_{\mathbb{L}}(\Gamma, \sigma)$ is a twisted Fr\'echet $\ast$-subalgebra that is dense and spectral invariant in $\ell^1(\Gamma, \sigma)$.  To deduce that $\ell^1(\Gamma, \sigma)$ is SI in $C^*(\Gamma, \sigma)$, we will rely on results obtained by Austad in \cite{Aust} building on the work of Gr\"ochenig and Leinert in \cite{Groch-Lein}, Ludwig in \cite{Ludwig}, and  Hulanicki in \cite{Hul}.  We will then be able to equip our spectral triples for noncommutative solenoids with $H^{1, \infty}_{\length}(\Gamma, \sigma_{\theta})$, $\Gamma = \mathbb{Z} \Big[\frac{1}{p}\Big] \times \mathbb{Z} \Big[\frac{1}{p}\Big]$, as a choice of dense smooth subalgebra (see Corollary \ref{cor: main thm}).

\subsection{Spectral Invariance of Dense Twisted Fr\'echet $\ast$-Subalgebras of $\ell^1(\Gamma, \sigma)$}

For twisted group $C^*$-algebras, the multiplier plays a role in adjoint and convolution product computations.  Since multipliers on groups take complex values of unit modulus, the twisted Fr\'echet $\ast$-subalgebra  $H^{1,\infty}_{\mathbb L}(\Gamma,\sigma) \subset \ell^1(\Gamma, \sigma)$ can be shown to exhibit additional structure with respect to these operations. For generalities on Fr\'echet algebras in our context, see e.g. \cite{ji}.

\begin{prop}\label{prop:frechetalgebra}  Let $\Gamma$ be a countable discrete group and $\sigma$ a multiplier on $\Gamma$.  Suppose that $\mathbb{L}$ is a length function on $\Gamma$.  Then $H^{1,\infty}_{\mathbb L}(\Gamma,\sigma)$ is a (twisted) $\ast$-subalgebra of $\ell^1(\Gamma,\sigma)$.  Furthermore, $H^{1,\infty}_{\mathbb L}(\Gamma,\sigma)$ is also a {\bf twisted Fr\'echet $\ast$-algebra} with respect to the topology defined by the increasing sequence of norms $\{\|\cdot\|_{1,s,{\mathbb L}}\}_{s\geq 0}$. 
\end{prop}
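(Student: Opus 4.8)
The plan is to establish, in turn, that $H^{1,\infty}_{\mathbb L}(\Gamma,\sigma)$ is a linear subspace of $\ell^1(\Gamma,\sigma)$ closed under the twisted convolution $*_{\sigma}$ and the adjoint, that each norm $\|\cdot\|_{1,s,\mathbb L}$ is submultiplicative for $*_{\sigma}$, and finally that the resulting locally convex space is complete. The single inequality that drives everything is the elementary consequence of subadditivity of $\mathbb L$,
\begin{equation*}
1 + \mathbb L(\gamma_1\gamma_2) \;\leq\; 1 + \mathbb L(\gamma_1) + \mathbb L(\gamma_2) \;\leq\; \big(1+\mathbb L(\gamma_1)\big)\big(1+\mathbb L(\gamma_2)\big),
\end{equation*}
valid for all $\gamma_1,\gamma_2\in\Gamma$ because the omitted cross term $\mathbb L(\gamma_1)\mathbb L(\gamma_2)$ is nonnegative; since the bases exceed $1$, raising to the power $s\geq 0$ preserves the inequality and yields $(1+\mathbb L(\gamma_1\gamma_2))^s \leq (1+\mathbb L(\gamma_1))^s(1+\mathbb L(\gamma_2))^s$.

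For submultiplicativity I would start from the pointwise bound $|(f_1 *_{\sigma} f_2)(\gamma)| \leq \sum_{\gamma_1} |f_1(\gamma_1)|\,|f_2(\gamma_1^{-1}\gamma)|$, which holds because $\sigma$ takes values of unit modulus, then insert the weight $(1+\mathbb L(\gamma))^s$, reindex the double sum by $\gamma_2=\gamma_1^{-1}\gamma$, and apply the displayed inequality to factor the weight across the two variables. This gives
\begin{equation*}
\|f_1 *_{\sigma} f_2\|_{1,s,\mathbb L} \;\leq\; \|f_1\|_{1,s,\mathbb L}\,\|f_2\|_{1,s,\mathbb L},
\end{equation*}
so in particular the convolution of two elements of $H^{1,\infty}_{\mathbb L}(\Gamma,\sigma)$ has finite $\|\cdot\|_{1,s,\mathbb L}$ norm for every $s\geq 0$ and thus lies in the space. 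Closure under the adjoint is easier still: from $|f^*(\gamma)| = |f(\gamma^{-1})|$ and $\mathbb L(\gamma)=\mathbb L(\gamma^{-1})$ one gets $\|f^*\|_{1,s,\mathbb L} = \|f\|_{1,s,\mathbb L}$, so $f\mapsto f^*$ is isometric for each norm. That the family $\{\|\cdot\|_{1,s,\mathbb L}\}_{s\geq 0}$ is increasing is immediate from $1+\mathbb L(\gamma)\geq 1$, and that $H^{1,\infty}_{\mathbb L}(\Gamma,\sigma)$ is a subspace of $\ell^1(\Gamma,\sigma)$ follows by taking $s=0$, where the norm is exactly the $\ell^1$ norm.

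For completeness, the topology is already determined by the countable increasing subfamily $\{\|\cdot\|_{1,n,\mathbb L}\}_{n\in\mathbb N}$, so it suffices to check sequential completeness. Given a Cauchy sequence $(f_k)$, the case $s=0$ shows it is Cauchy in $\ell^1(\Gamma,\sigma)$, hence converges there, and so pointwise, to some $f$. Fixing $s$ and $\epsilon>0$, choosing $K$ with $\|f_j-f_k\|_{1,s,\mathbb L}<\epsilon$ for $j,k\geq K$, restricting the defining sum to an arbitrary finite subset of $\Gamma$, and letting $k\to\infty$ via pointwise convergence, a Fatou-type argument yields $\|f_j-f\|_{1,s,\mathbb L}\leq\epsilon$ for all $j\geq K$. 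This simultaneously shows $f_j-f\in H^{1,\infty}_{\mathbb L}(\Gamma,\sigma)$, whence $f\in H^{1,\infty}_{\mathbb L}(\Gamma,\sigma)$, and that $f_j\to f$ in every norm, giving the Fréchet property. I expect the submultiplicativity estimate, and specifically the clean factorization of the weight through the product inequality above, to be the technical heart of the argument, while the adjoint and completeness steps are routine.
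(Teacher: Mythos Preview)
Your proposal is correct and follows essentially the same approach as the paper: the paper derives the same weight inequality $1+\mathbb L(\gamma)\leq(1+\mathbb L(\eta))(1+\mathbb L(\eta^{-1}\gamma))$ from subadditivity and uses it to obtain submultiplicativity of each $\|\cdot\|_{1,s,\mathbb L}$ under $*_\sigma$, and it handles the adjoint by the same change of variable $\gamma\mapsto\gamma^{-1}$ together with $|\sigma|=1$ and $\mathbb L(\gamma)=\mathbb L(\gamma^{-1})$. The only difference is that you also supply the completeness (Fr\'echet) argument via a Fatou-type estimate, which the paper leaves implicit after establishing submultiplicativity.
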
	

\begin{proof}
To show that $H^{1,\infty}_{\mathbb L}(\Gamma,\sigma)$ is invariant under the $^*$-operation in $\ell^1(\Gamma,\sigma)$, suppose that $f$ is in $H^{1,\infty}_{\mathbb L}(\Gamma,\sigma)$.  Then

\[
\begin{split}
\|f^*\|_{1,s,{\mathbb L}}\;&=\;
\sum_{\gamma \in \Gamma} | \, \overline{f(\gamma^{-1}) \sigma(\gamma,\gamma^{-1})} \, |(1+{\mathbb L}(\gamma))^s = 
\sum_{\gamma \in \Gamma} | f(\gamma)|(1+{\mathbb L}(\gamma))^s = \|f\|_{1,s,{\mathbb L}} \, .
\end{split}
\]
 Since the choice of $f$ was arbitrary, $H^{1,\infty}_{\mathbb L}(\Gamma,\sigma) \subset \ell^1(\Gamma,\sigma)$ is a $^*$-subalgebra.

To determine bounds on the behavior of $H^{1,\infty}_{\mathbb L}(\Gamma,\sigma)$ under the $\sigma$-twisted convolution product in $\ell^1(\Gamma,\sigma)$, a bound on the length function will first be obtained.  For every $\gamma, \eta \in \Gamma$, the fact that  $\mathbb{L}$ is a length function on $\Gamma$ yields, by taking $\gamma= \eta \eta^{-1}\gamma:$

\begin{equation}
\label{eq:threeStar}
\begin{split}
 1 + \mathbb{L}(\gamma) &\leq 1 + \mathbb{L}(\eta) + \mathbb{L}(\eta^{-1}\gamma) \leq  1 + \mathbb{L}(\eta) + \mathbb{L}(\eta^{-1}\gamma)+ L(\eta) L(\eta^{-1}\gamma)\\
&=\big(  1 + \mathbb{L}(\eta)\big)\big(  1 + \mathbb{L}(\eta^{-1}\gamma)\big).
\end{split}
\end{equation}
Thus for every $f, f' \in H^{1,\infty}_{\mathbb L}(\Gamma,\sigma)$, we have by Equation \eqref{eq:threeStar}:
\[
\begin{split}
\|( f \ast_{\sigma} f') \|_{1,s,{\mathbb L}} &= \sum_{\gamma\in \Gamma} | \, (f *_{\sigma} f') (\gamma) \, |(1+{\mathbb L}(\gamma))^s \leq \sum_{\gamma \in \Gamma}  \sum_{\eta \in \Gamma} | \, f (\eta) f' (\eta^{-1} \gamma) \sigma(\eta,\eta^{-1}\gamma )\, |(1+{\mathbb L}(\gamma))^s \\
&\leq \sum_{\gamma \in \Gamma}  \sum_{\eta \in \Gamma} | \, f (\eta) f' (\eta^{-1} \gamma) \,\big(  1 + \mathbb{L}(\eta)\big)^s \big(  1 + \mathbb{L}(\eta^{-1}\gamma)\big)^s  
\\
&= \sum_{\eta \in \Gamma} | f (\eta)| (1 + \mathbb{L}(\eta))^s \sum_{\gamma \in \Gamma} \, | f' (\eta^{-1}\gamma )| (1 + \mathbb{L}(\eta^{-1} \gamma))^s   
 =   \|f \|_{1,s,{\mathbb L}} \, \|f ' \|_{1,s,{\mathbb L}} < \infty.
\end{split}
\]
 Thus for each $s \geq 0$, $\| \cdot \|_{1,s, \mathbb{L}}$ has been shown to be submultiplicative with respect to the twisted convolution product.  Hence $H^{1,\infty}_{\mathbb L}(\Gamma,\sigma)$ is also a twisted Fr\'echet $\ast$-subalgebra of $\ell^1(\Gamma, \sigma)$.
\end{proof}
Because $C_C(\Gamma, \sigma) \subset H^{1, \infty}_{\mathbb{L}}(\Gamma)$, $H^{1, \infty}_{\mathbb{L}}(\Gamma)$ is dense in  $\ell^1(\Gamma, \sigma)$.  We now begin to lay the groundwork towards extending a result of Jolissaint in \cite{jol} to these twisted Fr\'echet $\ast$-subalgebras. 
\begin{prop} 
	\label{prop:joliThm}
	Let $\Gamma$ be a countable discrete group and $\sigma$ a multiplier on $\Gamma$.  Suppose that $\mathbb{L}$ is a length function on $\Gamma$. Then the twisted Fr\'echet $\ast$-subalgebra $H^{1,\infty}_{\mathbb L}(\Gamma,\sigma)$ is SI and dense in $\ell^1(\Gamma,\sigma)$.	

\end{prop}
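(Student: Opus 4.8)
The plan is to treat density and spectral invariance separately. Density is immediate: since $C_C(\Gamma,\sigma)\subseteq H^{1,\infty}_{\mathbb L}(\Gamma,\sigma)\subseteq \ell^1(\Gamma,\sigma)$ and $C_C(\Gamma,\sigma)$ is dense in $\ell^1(\Gamma,\sigma)$, so is $H^{1,\infty}_{\mathbb L}(\Gamma,\sigma)$. For spectral invariance, by Schweitzer's Lemma 1.2 (Remark \ref{rmk:Schweizer}) it suffices to prove that $H^{1,\infty}_{\mathbb L}(\Gamma,\sigma)$ is inverse-closed in $\ell^1(\Gamma,\sigma)$: whenever $a\in H^{1,\infty}_{\mathbb L}(\Gamma,\sigma)$ is invertible in $\ell^1(\Gamma,\sigma)$, its inverse again lies in $H^{1,\infty}_{\mathbb L}(\Gamma,\sigma)$. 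I would first reduce to elements near the unit $\delta_e$ (which lies in $H^{1,\infty}_{\mathbb L}(\Gamma,\sigma)$ since $\mathbb L(e)=0$): given such an $a$, choose $c\in C_C(\Gamma,\sigma)$ with $\|c-a^{-1}\|_{\ell^1}$ so small that $b:=\delta_e - c\ast_\sigma a$ satisfies $\|b\|_{\ell^1}<1$, noting $b\in H^{1,\infty}_{\mathbb L}(\Gamma,\sigma)$ because the latter is an algebra (Proposition \ref{prop:frechetalgebra}). If $c\ast_\sigma a=\delta_e-b$ can be shown to be invertible \emph{inside} $H^{1,\infty}_{\mathbb L}(\Gamma,\sigma)$, then $a^{-1}=(c\ast_\sigma a)^{-1}\ast_\sigma c$ lies in $H^{1,\infty}_{\mathbb L}(\Gamma,\sigma)$, which is what we want.

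The heart of the argument is an estimate genuinely stronger than the submultiplicativity of Proposition \ref{prop:frechetalgebra}. Writing $\nu_m:=\|\cdot\|_{1,m,\mathbb L}$ and recalling that the increasing family $\{\nu_s\}_{s\ge 0}$ has the integer-indexed subfamily as a cofinal, hence topology-generating, subset, I would combine $1+\mathbb L(\gamma)\le (1+\mathbb L(\eta))+(1+\mathbb L(\eta^{-1}\gamma))$ with the binomial theorem to obtain, for every $m\in\mathbb N$, the higher-order Leibniz inequality
\[
\nu_m(f\ast_\sigma g)\;\le\;\sum_{k=0}^m \binom{m}{k}\,\nu_k(f)\,\nu_{m-k}(g).
\]
The verification is the same reindexing of the twisted-convolution double sum as in the proof of Proposition \ref{prop:frechetalgebra}, but retaining the full binomial expansion instead of collapsing it to the single product $\nu_m(f)\nu_m(g)$.

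With this in hand I would run a generating-function form of the Neumann series, following the techniques of Jolissaint \cite{jol} and Ji \cite{ji}. Associate to each $f$ the exponential generating series $\Phi_f(t)=\sum_{m\ge 0}\nu_m(f)\,t^m/m!$, which has nonnegative coefficients; the higher Leibniz inequality is precisely the statement that $\Phi_{f\ast_\sigma g}\preceq \Phi_f\,\Phi_g$ coefficientwise. Hence $\Phi_{b^n}\preceq \Phi_b^{\,n}$, so that $\sum_{n\ge 0}\nu_m(b^n)\le m!\,[t^m]\bigl(1-\Phi_b(t)\bigr)^{-1}$ for each $m$, where $[t^m]$ denotes coefficient extraction. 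Because $\Phi_b(0)=\nu_0(b)=\|b\|_{\ell^1}<1$, each such coefficient is finite (a convergent combination of geometric sums, since the non-constant part of $\Phi_b$ contributes only finitely many terms to any fixed $[t^m]$), so the Neumann series $\sum_{n\ge 0}b^n$ converges in every norm $\nu_m$; as $H^{1,\infty}_{\mathbb L}(\Gamma,\sigma)$ is complete, its limit is $(\delta_e-b)^{-1}$ and lies in the algebra, closing the reduction. The main obstacle is exactly this convergence: bare submultiplicativity only yields $\nu_m(b^n)\le\nu_m(b)^n$, which demands the $m$-dependent smallness $\nu_m(b)<1$ and hence cannot deliver convergence in all seminorms at once. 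The binomial refinement, repackaged through the exponential generating function, is what decouples the growth in $m$ from the growth in $n$ and makes the single hypothesis $\|b\|_{\ell^1}<1$ sufficient.
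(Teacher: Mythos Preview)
Your argument is correct, and it takes a genuinely different route from the paper. The paper follows Jolissaint's original technique closely: it uses the alternative characterization of $H^{1,\infty}_{\mathbb L}(\Gamma,\sigma)$ via the quantities $\mu_{q,\sigma}(\phi)=\sup_N N^q\|(1-\chi_N)\phi\|_{\ell^1}$ (Remark~\ref{rm:mu}), then for $h=1-f$ with $\|f\|_{\ell^1}<\tfrac12$ expands $(1-\chi_N)f^n$ as a telescoping sum involving the cut-offs $\chi_{\sqrt N}$, obtaining $\|(1-\chi_N)f^n\|_{\ell^1}\le n\|(1-\chi_{\sqrt N})f\|_{\ell^1}$ for $n\le\sqrt N$ and handling the tail $n>\sqrt N$ by a crude geometric bound. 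The reduction from a general invertible $h$ to this perturbative case is the same as yours.

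Your approach replaces the ball cut-off combinatorics by the additive estimate $1+\mathbb L(\gamma)\le(1+\mathbb L(\eta))+(1+\mathbb L(\eta^{-1}\gamma))$ and the resulting binomial Leibniz inequality $\nu_m(f\ast_\sigma g)\le\sum_k\binom{m}{k}\nu_k(f)\nu_{m-k}(g)$, then repackages the Neumann series through the exponential generating function $\Phi_f$. This is cleaner and more algebraic: it never invokes the $\mu_q$ description, works directly with the defining seminorms, and makes transparent why the single hypothesis $\nu_0(b)<1$ suffices for convergence in every $\nu_m$. The paper's method, on the other hand, is closer to the decay-rate intuition behind rapid-decrease algebras and mirrors Jolissaint verbatim, which has the expository advantage of signalling exactly which classical lemma is being twisted. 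Both approaches yield the same conclusion with comparable effort.
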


\begin{proof}

In this proof, we follow the proof of  Jolissaint's Proposition 2.3  \cite{jol}. To check  Definition \ref{def:SI}, we  first consider the case in which   $H^{1,\infty}_{\mathbb L}(\Gamma,\sigma) \ni h=(1-f)  $, with  $\| f \|_{\ell^1(\Gamma, \sigma)} < \frac{1}{2}.$ Hence $h$ is first of all invertible in $\ell^1(\Gamma, \sigma)$, with inverse $h^{-1}=\sum_{n=0}^{\infty} f^n$, where exponentiation signifies repeated application of the twisted convolution product. We will show below  that  $h^{-1}  \in H^{1, \infty}_{\mathbb{L}}(\Gamma, \sigma)$, as needed for spectral invariance. Hence our  goal is to  obtain a bound on $\mu_{q, \sigma}(\sum_{n=1}^{\infty} f^n)$, which will imply $\sum_{n=0}^{\infty} f^n \in H^{1, \infty}_{\mathbb{L}}(\Gamma, \sigma )$, see Remark \ref{rm:mu}.

 Given $N \in \mathbb{N}$, define $B_{\mathbb{L}}(N)$ as in part (1) of Definition \ref{def:lengthfcnbddoubl}. Let $\chi_{N}$ denote the characteristic function supported on ${B_{\mathbb{L}}(N)}$.  Fix $q \in \mathbb{N}$. (Note that we  changed Jolissaint's notation from $\chi^N$ to $\chi_N$ for the characteristic function of $B_{\mathbb{L}}(N)$. Now fix $N\in\mathbb N,\;N \geq 1$.  For $\N \ni n \leq \sqrt{N}$,

\begin{equation}
\label{eq:joli1}
\begin{split}
(1 - \chi_{N})f^n &= (1 - \chi_N)[((1 - \chi_{\sqrt{N}})f) *_{\sigma} f^{n-1}] + (1 - \chi_N)[(\chi_{\sqrt{N}}f) *_{\sigma}((1 - \chi_{\sqrt{N}})f) *_{\sigma} f^{n-2}] + \cdots \\
 &\cdots + (1 - \chi_N)[(\chi_{\sqrt{N}}f)^k *_{\sigma}((1 - \chi_{\sqrt{N}})f) *_{\sigma} f^{n-k-1}] + \cdots \qquad \qquad\\
    &\cdots + (1 - \chi_N)[(\chi_{\sqrt{N}}f)^{n-1} *_{\sigma}((1 - \chi_{\sqrt{N}})f)]  +  (1 - \chi_N)(\chi_{\sqrt{N}}f)^n .
 \end{split} 
 \end{equation}  
This equality can be confirmed using the associative structure of $H^{1, \infty}_{\mathbb{L}}(\Gamma, \sigma)$ in combination with a telescoping/cancellation argument involving the expansion of  $f^n$.

Now note that  the last  term is zero because $(1-\chi_{N})$ is supported outside the ball of radius $N$.  Consequently we only have the first  $n$ terms left.  Using these $n$ terms, the triangle inequality, and the submultiplicativity of the norm, we obtain, for $n\geq 1:$

\begin{equation}
\label{eq:joli-ineq}
\|(1 - \chi_{N})f^n\|_{\ell^1(\Gamma,\sigma)}\;\leq\; \|n((1 - \chi_{\sqrt{N}})f)\|_{\ell^1(\Gamma,\sigma)}
\end{equation}
We also remark that if $\|f\|:=\| f \|_{\ell^1(\Gamma, \sigma)}<1/2$, then, for $P\in \N:$
\begin{equation}\label{eq:series}
\sum_{n= P}^\infty \|f\|^n < \|f\|^P \frac{1}{1-1/2}=2\|f\|^P <\frac{2\|f\|}{2^{P-1}}= \frac{4\|f\|}{2^{P}}.
\end{equation}
Therefore, Equations \eqref{eq:joli1}, \eqref{eq:joli-ineq}  and  \eqref{eq:series} imply, if we denote by  ``$[\sqrt{N}]$" the ``greatest integer less than or equal to $\sqrt{N}$:"

\begin{equation}
\label{eq:joli2}
\begin{split}
\Big|\Big| (1 -  \chi_{N}) \sum_{n=1}^{\infty} f^n \Big|\Big|_{\ell^1(\Gamma, \sigma)} &\leq   \sum_{n=1}^{[\sqrt{N}]} \|(1 -  \chi_{N})f^n \|_{\ell^1(\Gamma, \sigma)} +   \sum_{n=[\sqrt{N}]+1}^{\infty} \| (1 -  \chi_{N}) f^n \|_{\ell^1(\Gamma, \sigma)} \qquad \\
&\leq   \sum_{n=1}^{[\sqrt{N}]} n\|((1 - \chi_{\sqrt{N}})f)\|_{\ell^1(\Gamma,\sigma)}+   \sum_{n=[\sqrt{N}]+1}^{\infty} \| f \|^n \qquad \\
&\leq   [\sqrt{N}]^2\|(1 - \chi_{\sqrt{N}})f \|_{\ell^1(\Gamma, \sigma)} +  \frac{4 \| f \|}{2^{{[\sqrt{N}]+1}}} \qquad \\
&\leq   [\sqrt{N}]^2\|(1 - \chi_{\sqrt{N}})f \|_{\ell^1(\Gamma, \sigma)} +  \frac{4 \| f \|}{2^{{\sqrt{N}}}}. \qquad \\
\end{split}
\end{equation}
So  Equation \eqref{eq:joli2} becomes:

\[
\begin{split}
\Big\| (1 -  \chi_{N}) \sum_{n=1}^{\infty} f^n \Big\|_{\ell^1(\Gamma, \sigma)} 
&\leq   [\sqrt{N}]^2\|(1 - \chi_{\sqrt{N}})f \|_{\ell^1(\Gamma, \sigma)} +  \frac{4 \| f \|_{\ell^1(\Gamma, \sigma)}}{2^{\sqrt{N}}}.\qquad \\
\end{split}
\]
Then, multiplying both sides of the above inequality by $[\sqrt{N}]^{2q}$, for $q\in \N$, we obtain: 
\[
\begin{split}
[\sqrt{N}]^{2q} \Big|\Big| (1 -  \chi_{N}) \sum_{n=1}^{\infty} f^n \Big|\Big|_{\ell^1(\Gamma, \sigma)} 
&\leq   [\sqrt{N}]^{2q+2} \|(1 - \chi_{\sqrt{N}}) f \|+[\sqrt{N}]^{2q} \frac{4 \| f \|_{\ell^1(\Gamma, \sigma)}}{2^{\sqrt{N}}} \qquad \\
&\leq   [\sqrt{N}]^{2q+2} \|(1 - \chi_{[\sqrt{N}]}) f \|_{\ell^1(\Gamma, \sigma)}+N^q \frac{4 \| f \|}{2^{\sqrt{N}}}. \qquad \\
\end{split}
\]
Now recall that $f \in H^{1,\infty}_{\mathbb L}(\Gamma,\sigma)$ implies that $\mu_{2q + 2, \sigma}(f)$ is finite, and that, $\forall N\in\mathbb N$:
$$ [\sqrt{N}]^{2q+2} \|(1 - \chi_{[\sqrt{N}]}) f \|_{\ell^1(\Gamma, \sigma)}\;\leq\;\mu_{2q + 2, \sigma}(f)<\infty.$$
Also $\displaystyle\lim_{N\to\infty}\frac{N^q}{2^{\sqrt{N}}}=0,$ so that the set of nonnegative numbers $\Big\{N^q \frac{4 \| f \|}{2^{\sqrt{N}}}\Big\}_{N=1}^{\infty}$ is bounded above.
Thus
$$\mu_q(\sum_{n=1}^{\infty}f^n)\;<\;\infty,\;\forall q\in\mathbb N,$$ so that $\sum_{n=1}^{\infty}f^n$ is an element of $H^{1,\infty}_{\mathbb L}(\Gamma,\sigma).$   Therefore $\sum_{n=0}^{\infty}f^n=1+\sum_{n=1}^{\infty}f^n$ is an element of $H^{1,\infty}_{\mathbb L}(\Gamma,\sigma)$ as well.   Hence $h=(1-f)$ is invertible in $H^{1,\infty}_{\mathbb L}(\Gamma,\sigma)$  when $\|1-h\|_{\ell^1(\Gamma, \sigma)}<\frac{1}{2}.$

We now consider the case of arbitrary $h\in H^{1,\infty}_{\mathbb L}(\Gamma,\sigma)$ where $h$ is invertible in the larger Banach $^*$-algebra $\ell^1(\Gamma,\sigma),$  but {\it a priori} it is unknown whether or not $h^{-1}\in H^{1,\infty}_{\mathbb L}(\Gamma,\sigma).$   Recall that $1_{\ell^1(\Gamma,\sigma)}=1_{H^{1,\infty}_{\mathbb L}(\Gamma,\sigma)}=1\cdot \delta_{0_\Gamma},$ and $1_{H^{1,\infty}_{\mathbb L}(\Gamma,\sigma)}\;=\;h\ast_{\sigma}h^{-1}.$ Since $H^{1,\infty}_{\mathbb L}(\Gamma,\sigma)$ is dense in $\ell^1(\Gamma,\sigma)$, an adaptation of arguments used by Ji in the proof of Theorem 1.2 in \cite{ji} yields that if we take  $h' \in H^{1,\infty}_{\mathbb L}(\Gamma,\sigma)$ such that
$\|h^{-1}-h'\|_{\ell^1(\Gamma, \sigma)}<\frac{1}{2\cdot \|h\|_{
\ell^1(\Gamma, \sigma)}},$ then:
\begin{equation*}
\| 1- (h *_{\sigma} h')\|_{\ell^1(\Gamma, \sigma)}\;=\;\|(h *_{\sigma}h^{-1})- (h *_{\sigma} h')\|_{\ell^1(\Gamma, \sigma)}\leq\;\|h\|_{\ell^1(h *_{\sigma})}\|\cdot\|h^{-1}-h'\|_{\ell^1(\Gamma, \sigma)}< \frac{1}{2}.
\end{equation*}
We now note that $(h *_{\sigma} h')$ and $1- (h *_{\sigma} h')$ are both elements of $H^{1,\infty}_{\mathbb L}(\Gamma,\sigma).$  By our proof given above, $(h *_{\sigma} h')$ is invertible in $H^{1,\infty}_{\mathbb L}(\Gamma,\sigma),$ with  $(h *_{\sigma} h')^{-1}$  an element of  $H^{1,\infty}_{\mathbb L}(\Gamma,\sigma)$. 
Because $H^{1,\infty}_{\mathbb L}(\Gamma,\sigma)$ is an associative algebra, the element 
$h^{-1} = h' *_{\sigma} (h *_{\sigma} h')^{-1}$  is an element of $H^{1,\infty}_{\mathbb L}(\Gamma,\sigma),$ as well as being an element of $\ell^1(\Gamma,\sigma).$

It follows that if $h\in H^{1,\infty}_{\mathbb L}(\Gamma,\sigma)$ is invertible in $\ell^1(\Gamma,\sigma),$ it is also invertible in $H^{1,\infty}_{\mathbb L}(\Gamma,\sigma).$  Therefore, 
$H^{1,\infty}_{\mathbb L}(\Gamma,\sigma)$ is a subalgebra that is SI in  $\ell^1(\Gamma,\sigma).$

\end{proof}

\subsection{A Noncommutative Wiener's Lemma for Twisted Discrete Nilpotent  Group $C^*$-Algebras}

Because of the equivalence between spectral invariance and being closed under the holomorphic functional calculus  for unital subalgebras of twisted convolution algebras, a statement about the spectral invariance of $\ell^1(\Gamma,\sigma) \subset C^*(\Gamma,\sigma)$ can be viewed as a noncommutative Wiener's Lemma.  Many such results \cite{Aust, Ludwig, Groch-Lein} rely on showing that the spectrum of positive elements in the unital subalgebra is positive.  We will demonstrate that $\ell^1(\Gamma,\sigma)$ belongs to a class of algebras which exhibit this property.

\begin{defn}
[E.g. Ludwig \cite{Ludwig}]
	\label{def:symmetricalgebra}  
    A Banach $^*$-algebra $A$ is called \textbf{symmetric} if for all $a \in A$, $spec_A(a^*a) \subseteq [0, \infty)$.
\end{defn}
As noted by Gr\"ochenig and Leinert in their work on noncommutative Wiener Lemmas \cite{Groch-Lein}, locally compact nilpotent groups have symmetric $L^1$--algebras.  They also cite and build upon the following result due to Ludwig.  
\begin{thm} (\cite{Ludwig}, \cite{Groch-Lein})
	\label{thm:Ludwig}
	If $G$ is a locally compact nilpotent group, then $L^1(G)$ is symmetric.  Consequently, $spec_{L^1(G)}(f) = spec_{B(L^2(G))}(\lambda(f))$ for $f =f^* \in L^1(G),$ and $\lambda$ the regular representation of $L^1(G)$ as bounded operators on $L^2(G).$
\end{thm}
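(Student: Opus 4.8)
The plan is to prove symmetry first and then deduce the spectral identity. By the Shirali--Ford theorem, a Banach $\ast$-algebra is symmetric precisely when it is \emph{Hermitian}, i.e.\ every self-adjoint element has real spectrum; so it suffices to show $spec_{L^1(G)}(f) \subseteq \mathbb{R}$ for every $f = f^\ast \in L^1(G)$. I would establish this by induction on the nilpotency class of $G$, and to make the induction close I would prove the \emph{stronger} statement that $L^1(G,\sigma)$ is symmetric for every continuous $2$-cocycle $\sigma$ on every locally compact nilpotent $G$. The base case is $G$ abelian: for trivial $\sigma$ this is classical commutative Gelfand theory, while the twisted abelian case (the $\ell^1$-analogue of a noncommutative torus) is also symmetric, as in the twisted-convolution results of Gr\"ochenig and Leinert \cite{Groch-Lein}.

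For the inductive step I would peel off the center: put $Z = Z(G)$ and use the central extension $1 \to Z \to G \to G/Z \to 1$, where $G/Z$ is nilpotent of strictly smaller class. By Schur's lemma every irreducible unitary representation of $G$ restricts on $Z$ to a scalar character $\chi \in \widehat{Z}$, and the $C^*$-subalgebra $C^*(Z) \cong C_0(\widehat{Z})$ is central in $C^*(G)$; decomposing over its spectrum identifies the fibre over $\chi$ with a twisted group algebra $L^1(G/Z, \sigma_\chi)$, the cocycle $\sigma_\chi$ being determined by the extension and $\chi$. The stronger inductive hypothesis makes each fibre $L^1(G/Z, \sigma_\chi)$ symmetric. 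A disintegration of the representations of $L^1(G)$ along $\widehat{Z}$ then reassembles these fibrewise statements into the Hermitian property of $L^1(G)$. \textbf{This disintegration --- transferring symmetry from the fibres to the total algebra with uniform control over $\widehat{Z}$ --- is the technical heart of Ludwig's theorem and the step I expect to be the main obstacle}; it is precisely where nilpotency (through the central series and the polynomial growth it entails) enters essentially, and it is insensitive to the fact that discrete nilpotent groups need not be type~I.

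For the ``consequently'' clause I would combine symmetry with amenability. Since $G$ is nilpotent it is amenable, so the canonical map $C^*(G) \to C^*_r(G)$ is an isomorphism and $\lambda$ is a \emph{faithful} $\ast$-representation of $C^*(G)$; being faithful on a $C^*$-algebra it is isometric and spectrum-preserving, so $spec_{B(L^2(G))}(\lambda(f)) = spec_{C^*(G)}(f)$ for all $f$. It then remains to match $spec_{L^1(G)}(f)$ with $spec_{C^*(G)}(f)$ for $f = f^\ast$. The inclusion $spec_{C^*(G)}(f) \subseteq spec_{L^1(G)}(f)$ is automatic. For the reverse inclusion I would use the Pt\'ak--Hulanicki mechanism valid in any symmetric algebra: the Pt\'ak function $a \mapsto r_{L^1(G)}(a^\ast a)^{1/2}$ is the largest $C^*$-seminorm and hence coincides with $\|\cdot\|_{C^*(G)}$, giving $r_{L^1(G)}(f) = \|f\|_{C^*(G)}$ for self-adjoint $f$. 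Applying this to every real polynomial $q$ (each $q(f)$ again self-adjoint) yields, via the spectral mapping theorem,
\[
\max_{s \in spec_{L^1(G)}(f)} |q(s)| \;=\; \max_{s \in spec_{C^*(G)}(f)} |q(s)|,
\]
and since both spectra are compact subsets of $\mathbb{R}$ a peaking-polynomial argument forces them to be equal. Chaining the three identities gives $spec_{L^1(G)}(f) = spec_{B(L^2(G))}(\lambda(f))$, as asserted.
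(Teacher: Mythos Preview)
The paper does not prove this theorem at all: it is stated as a cited result from \cite{Ludwig} and \cite{Groch-Lein} and then used as a black box (via Austad's \cite{Aust}) in the proof of Lemma~\ref{lemma:ncWiener}. So there is no ``paper's own proof'' to compare your proposal against.

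On the merits of your sketch: the inductive strategy through the upper central series is in spirit close to Ludwig's original argument, and your identification of the disintegration step as the crux is accurate. Your treatment of the ``consequently'' clause via amenability plus the Pt\'ak--Hulanicki spectral-radius argument is essentially correct and is the standard route (this is exactly the content of \cite{Hul} that the paper cites). However, there is a genuine circularity in your base case. You invoke Gr\"ochenig--Leinert \cite{Groch-Lein} for the symmetry of the twisted abelian algebra $L^1(G,\sigma)$, but their proof of precisely that statement works by embedding $L^1(G,\sigma)$ as a quotient of $L^1(\mathbb{H}_\sigma)$ for the associated Heisenberg-type group $\mathbb{H}_\sigma$ (a two-step nilpotent group) and then \emph{applying Ludwig's theorem} to $\mathbb{H}_\sigma$. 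So you cannot take their result as the base of an induction whose goal is to establish Ludwig's theorem; you would need an independent argument for the twisted abelian base case (or, as Ludwig does, organise the induction so that only the untwisted abelian case is needed at the bottom and the cocycle is absorbed into the inductive machinery itself).
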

Our twisted discrete  group $C^*$-algebras are in fact quotients of  nilpotent group $C^*$-algebras, as follows.  Let $\Gamma$ be a countable discrete nilpotent  group and $\sigma$ a multiplier on $\Gamma$.  Set $\mathbb{H}_{\sigma} = \Gamma \times \mathbb{T}$.  Equip $\mathbb{H}_{\sigma}$ with multiplication by defining
\begin{equation*}
    (\gamma, z) \cdot_{\sigma} (\gamma', z') = (\gamma + \gamma', zz' \, \sigma(\gamma, \gamma')).
\end{equation*}
Given the product topology, $\mathbb{H}_{\sigma}$ becomes a locally compact  second countable group.  Moreover, $\mathbb{H}_{\sigma}$ is nilpotent. For non-trivial $\sigma$ and abelian groups $\Gamma,\;\mathbb{H}_{\sigma}$ is also in fact a two-step nilpotent group.  The Haar measure $dm$ on $\mathbb{H}_{\sigma}$ can be described explicitly by
\begin{equation*}
    \int_{\mathbb{H}_{\sigma}} F(\gamma, z) \, dm = \sum_{\gamma \in \Gamma} \int_{\mathbb{T}} F(\gamma, z) \, dz.
\end{equation*}
As in the more specific setting of $\Gamma= \mathbb{Z}^d\times \mathbb{Z}^d $ considered by Gr\"ochenig and Leinert in \cite{Groch-Lein}, we define convolution $*_{\sigma}$ of integrable functions on $\mathbb{H}_{\sigma}$ with respect to this measure.  We also follow their terminology in describing $\mathbb{H}_{\sigma}$ as a {\bf Heisenberg-type group}. These groups are also called Mackey groups in \cite{Aust}, where they use the notation $\Gamma_\sigma$ for them. By constructing a unitary group representation of  $\mathbb{H}_{\sigma}$ on $\ell^2(\Gamma),$ it is not hard to see that the twisted group $C^*$-algebra $C^*(\Gamma,\sigma)$ is a quotient of the group $C^*$-algebra $C^*(\mathbb{H}_{\sigma}).$  Note that nilpotent groups are amenable, so that the reduced and full group $C^*$-algebras of $\mathbb{H}_{\sigma}$ are the same.  
In \cite{Aust}, Austad extended Gr\"ochenig and Leinert's arguments to identify conditions for spectral invariance in the more general context of twisted locally compact group $C^*$-algebras.  In both \cite{Aust} and \cite{Groch-Lein}, a condition on the spectrum of self-adjoint elements in $L^1(G)$ for $G$ locally compact with polynomial growth, first obtained by Hulanicki in \cite{Hul} and generalized by B. Barnes in \cite{Bar} (see the Theorem on page 137 of \cite{Hul} and \cite[Theorem 4]{Bar}), played an important role in obtaining these results.  We will now apply Austad's result, together with Ludwig's Theorem, to deduce that there is a noncommutative Wiener's Lemma for twisted discrete nilpotent group $C^*$-algebras.
\begin{lemma}\label{lemma:ncWiener} (c.f. \cite{Aust}, Theorem 3.1)
  Let $\Gamma$ be a  countable discrete nilpotent  group and $\sigma$ a multiplier on $\Gamma$.  Restrict the left-$\sigma$ regular representation $\lambda_{\sigma}: C^*(\Gamma, \sigma) \rightarrow {\it B}(\ell^2(\Gamma))$ to $\ell^1(\Gamma, \sigma)$ and let $\lambda_{\sigma}$ also denote this restriction.  If $f \in \ell^1(\Gamma, \sigma)$, then
  \begin{equation}\label{eq:spec}
      spec_{\ell^1(\Gamma, \sigma)}(f) = spec_{{\it B}(\ell^2(\Gamma))}(\lambda_{\sigma}(f)).
  \end{equation}
In particular, $f$ is invertible in the Banach $^*$-algebra $\ell^1(\Gamma, \sigma)$ if and only if $\lambda_{\sigma}(f)$ is invertible in ${\it  B}(\ell^2(\Gamma))$.   
\end{lemma}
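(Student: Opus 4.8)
The plan is to realize $\ell^1(\Gamma,\sigma)$ as a symmetric Banach $\ast$-algebra and then invoke a Hulanicki--Barnes type spectral-invariance result; the nilpotency of $\Gamma$ enters only through the ambient group $\mathbb{H}_\sigma$. First I would recall from the discussion preceding the statement that $\mathbb{H}_\sigma = \Gamma \times \mathbb{T}$, equipped with the $\sigma$-twisted multiplication, is a locally compact second countable nilpotent group, and that $C^*(\Gamma,\sigma)$ arises from $C^*(\mathbb{H}_\sigma)$ by restricting to the central character $z \mapsto z$ of $\{e\}\times\mathbb{T}$. Since $\mathbb{H}_\sigma$ is nilpotent, Ludwig's Theorem \ref{thm:Ludwig} shows that $L^1(\mathbb{H}_\sigma)$ is symmetric, so that $spec_{L^1(\mathbb{H}_\sigma)}(F) = spec_{B(L^2(\mathbb{H}_\sigma))}(\lambda(F))$ for every self-adjoint $F$.

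Next I would identify $\ell^1(\Gamma,\sigma)$ isometrically with the closed $\ast$-subalgebra of $L^1(\mathbb{H}_\sigma)$ consisting of the functions transforming according to a suitable central character $\chi$ (the $\chi$-isotypic component): the $\sigma$-twisted convolution on $\Gamma$ is precisely the convolution on $\mathbb{H}_\sigma$ restricted to this component, and the norm $\|z\,f(\gamma)\|_{L^1(\mathbb{H}_\sigma)} = \|f\|_{\ell^1(\Gamma,\sigma)}$ makes the identification isometric. Austad's Theorem 3.1 in \cite{Aust}, extending the Gr\"ochenig--Leinert framework of \cite{Groch-Lein} to the twisted setting, gives that symmetry of the ambient $L^1(\mathbb{H}_\sigma)$ passes to this isotypic subalgebra, so $\ell^1(\Gamma,\sigma)$ is itself symmetric in the sense of Definition \ref{def:symmetricalgebra}. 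Because $\Gamma$ is nilpotent, hence amenable, the left-$\sigma$ regular representation $\lambda_\sigma$ is a faithful $\ast$-representation of $\ell^1(\Gamma,\sigma)$ whose norm closure is $C^*_r(\Gamma,\sigma) = C^*(\Gamma,\sigma)$ (see Definition \ref{def:twistedgroupC*alg}). Applying the spectral-invariance theorem of Hulanicki \cite{Hul}, in the form generalized by Barnes \cite[Theorem 4]{Bar}, to the symmetric algebra $\ell^1(\Gamma,\sigma)$ and the faithful representation $\lambda_\sigma$ then yields $spec_{\ell^1(\Gamma,\sigma)}(g) = spec_{B(\ell^2(\Gamma))}(\lambda_\sigma(g))$ for every self-adjoint $g \in \ell^1(\Gamma,\sigma)$.

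To pass from self-adjoint elements to an arbitrary $f \in \ell^1(\Gamma,\sigma)$, I would use the standard fact that in a unital Banach $\ast$-algebra an element $g$ is invertible if and only if both $g^*g$ and $gg^*$ are invertible. Applying this with $g = f - \lambda\,1$ for $\lambda \in \mathbb{C}$ (where $1 = \delta_e$ is the unit of $\ell^1(\Gamma,\sigma)$), and noting that $(f-\lambda 1)^*(f-\lambda 1)$ and $(f-\lambda 1)(f-\lambda 1)^*$ are self-adjoint, invertibility of $f - \lambda 1$ in $\ell^1(\Gamma,\sigma)$ is governed by the self-adjoint case already settled, while invertibility of $\lambda_\sigma(f) - \lambda\,\mathrm{Id}$ in $B(\ell^2(\Gamma))$ is governed in the same way by the images of those two self-adjoint elements. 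Since $\lambda_\sigma$ is a $\ast$-homomorphism the two conditions coincide, giving $\lambda \in spec_{\ell^1(\Gamma,\sigma)}(f)$ if and only if $\lambda \in spec_{B(\ell^2(\Gamma))}(\lambda_\sigma(f))$, which is exactly Equation \eqref{eq:spec}. The ``in particular'' assertion is then the special case $\lambda = 0$, since $f$ is invertible precisely when $0$ lies outside its spectrum.

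The main obstacle is the transfer in the second step: deducing symmetry of the twisted isotypic subalgebra $\ell^1(\Gamma,\sigma)$ from symmetry of the full nilpotent group algebra $L^1(\mathbb{H}_\sigma)$. Symmetry is not inherited by closed $\ast$-subalgebras in general, so the argument must exploit that $\ell^1(\Gamma,\sigma)$ comes from a central character of $\mathbb{H}_\sigma$, allowing the spectra in $\ell^1(\Gamma,\sigma)$, in $L^1(\mathbb{H}_\sigma)$, and in their respective $C^*$-completions to be compared. This comparison is the technical heart of Austad's Theorem 3.1 and is exactly where the nilpotency, via Ludwig's symmetry result, is genuinely used; the remaining steps are formal.
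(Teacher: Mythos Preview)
Your approach is essentially the same as the paper's: both invoke Ludwig's theorem for the nilpotent Heisenberg-type group $\mathbb{H}_\sigma$ to obtain symmetry of $L^1(\mathbb{H}_\sigma)$, and both rely on Austad's Theorem~3.1 to pass to the twisted algebra. The paper's proof is terser---it simply verifies the hypotheses of \cite[Theorem~3.1]{Aust} and cites part~(ii) of that theorem as a black box yielding Equation~\eqref{eq:spec} directly for all $f$---whereas you unpack the mechanism (symmetry transfer, Hulanicki--Barnes for self-adjoint elements, then the $g^*g$/$gg^*$ trick to reach arbitrary $f$). One small caution: the Hulanicki and Barnes papers you cite concern untwisted convolution algebras $L^1(G)$, not abstract symmetric Banach $\ast$-algebras, so the cleaner justification at that step is the standard fact that a symmetric Banach $\ast$-algebra is spectrally invariant in its enveloping $C^*$-algebra, combined with amenability giving $C^*(\Gamma,\sigma)=C^*_r(\Gamma,\sigma)$; but this is a labeling issue, not a gap.
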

\begin{proof}
Second countable locally compact nilpotent groups are also amenable locally compact groups.  For a twisted group algebra arising as a quotient of a nilpotent (thus amenable) locally compact group and admitting a faithful $^*$-representation, Austad's proof of \cite[Theorem 3.1.(i)]{Aust} gives that the maximal $C^*$-norm is the unique $C^*$-norm.  In fact, this unique $C^*$-norm in the case of $\ell^1(\Gamma, \sigma)$ is completely determined by using $\| \cdot \|_{{\it B}(L^2(\mathbb{H}_{\sigma}, m))}$ and decomposing the left regular representation of $C^*(\mathbb{H}_{\sigma})$
(see Lemma 3.4 and Lemma 3.5 of \cite{Aust}).   Since $\mathbb{H}_{\sigma}$ is nilpotent, Ludwig's Theorem  \ref{thm:Ludwig} implies that $L^1(\mathbb{H}_{\sigma}, m)$ is also symmetric. This shows that the hypotheses of \cite[Theorem 3.1]{Aust} are satisfied, and \cite[Theorem 3.1.(ii)]{Aust} gives Equation \eqref{eq:spec}. 
\end{proof}
$\newline$
The noncommutative Wiener's Lemma above allows us to equip a  twisted discrete nilpotent group $C^*$-algebra with a smooth $\ast$-subalgebra that is spectral invariant, so has  the same $K$-theory as the original $C^*$-algebra.
We have thus achieved the promised generalization of Jolissaint's Proposition (\cite{jol}) to twisted discrete nilpotent group $C^*$-algebras:
\begin{thm} (c.f. Proposition 2.3 of \cite{jol}) \label{prop:disc group multipl} 
	Let $\Gamma$ be a countable discrete nilpotent group and $\sigma$ a multiplier on $\Gamma$.  Suppose that $\mathbb{L}$ is a length function on $\Gamma$.  Then the twisted Fr\'echet $\ast$-subalgebra $H^{1,\infty}_{\mathbb L}(\Gamma,\sigma)$ is dense and has the property of spectral invariance in $C^*(\Gamma, \sigma)$.  Therefore, $H^{1,\infty}_{\mathbb L}(\Gamma,\sigma)$ is stable in $C^*(\Gamma, \sigma)$ under the holomorphic functional calculus. 
\end{thm}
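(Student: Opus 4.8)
The plan is to deduce the statement by composing the two spectral-invariance results already established, along the nested chain of unital dense $\ast$-subalgebras
$$H^{1,\infty}_{\mathbb L}(\Gamma,\sigma)\;\subseteq\;\ell^1(\Gamma,\sigma)\;\subseteq\;C^*(\Gamma,\sigma),$$
and then to invoke Schweitzer's criterion (Remark \ref{rmk:Schweizer}) to upgrade spectral invariance to stability under the holomorphic functional calculus. For density I would first note that $C_C(\Gamma,\sigma)\subseteq H^{1,\infty}_{\mathbb L}(\Gamma,\sigma)$, since a finitely supported function makes each sum $\sum_{\gamma}|f(\gamma)|(1+\mathbb L(\gamma))^s$ finite; as $C_C(\Gamma,\sigma)$ is norm-dense in $C^*(\Gamma,\sigma)$ by construction, so is $H^{1,\infty}_{\mathbb L}(\Gamma,\sigma)$.

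Next, for spectral invariance I would argue transitively. By Proposition \ref{prop:joliThm}, $H^{1,\infty}_{\mathbb L}(\Gamma,\sigma)$ is SI in $\ell^1(\Gamma,\sigma)$: an element of $H^{1,\infty}_{\mathbb L}(\Gamma,\sigma)$ is invertible there precisely when it is invertible in $\ell^1(\Gamma,\sigma)$. Separately, the noncommutative Wiener's Lemma (Lemma \ref{lemma:ncWiener}) gives, for $f\in\ell^1(\Gamma,\sigma)$,
$$spec_{\ell^1(\Gamma,\sigma)}(f)=spec_{\mathcal{B}(\ell^2(\Gamma))}(\lambda_\sigma(f)).$$
Since $\Gamma$ is nilpotent, hence amenable, $C^*(\Gamma,\sigma)\cong C_r^*(\Gamma,\sigma)$, which is the $C^*$-subalgebra of $\mathcal{B}(\ell^2(\Gamma))$ generated by $\lambda_\sigma(\ell^1(\Gamma,\sigma))$, with $\lambda_\sigma$ carrying the unit of $\ell^1(\Gamma,\sigma)$ to the identity operator. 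By the spectral permanence of $C^*$-subalgebras, the spectrum of $\lambda_\sigma(f)$ computed in $C^*(\Gamma,\sigma)$ agrees with its spectrum in $\mathcal{B}(\ell^2(\Gamma))$; combined with the displayed equality this shows that $\ell^1(\Gamma,\sigma)$ is SI in $C^*(\Gamma,\sigma)$. Chaining the two facts: for $h\in H^{1,\infty}_{\mathbb L}(\Gamma,\sigma)$, invertibility of $h$ in $C^*(\Gamma,\sigma)$ forces $h^{-1}\in\ell^1(\Gamma,\sigma)$ by the second step, whence $h^{-1}\in H^{1,\infty}_{\mathbb L}(\Gamma,\sigma)$ by the first. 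Hence $H^{1,\infty}_{\mathbb L}(\Gamma,\sigma)$ is SI in $C^*(\Gamma,\sigma)$.

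Finally, to conclude stability under the holomorphic functional calculus I would apply Schweitzer's Lemma 1.2 as recorded in Remark \ref{rmk:Schweizer}: $H^{1,\infty}_{\mathbb L}(\Gamma,\sigma)$ is a unital Fr\'echet $\ast$-algebra (Proposition \ref{prop:frechetalgebra}) whose defining family of norms $\{\|\cdot\|_{1,s,\mathbb L}\}_{s\geq 0}$ agrees with the $\ell^1$-norm at $s=0$, and hence dominates the $C^*$-norm, so the inclusion into $C^*(\Gamma,\sigma)$ is continuous and the Fr\'echet topology is finer than the norm topology; being additionally SI, $H^{1,\infty}_{\mathbb L}(\Gamma,\sigma)$ is therefore stable under the holomorphic functional calculus. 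Given the prior results, the genuine content here is the transitivity of spectral invariance, so the main point to get right is the middle step: reconciling the Wiener's Lemma, phrased through $\mathcal{B}(\ell^2(\Gamma))$, with $C^*(\Gamma,\sigma)$ itself via amenability and $C^*$-spectral permanence, together with the matching of units across the tower. The remaining checks, namely density and the topological hypotheses of Schweitzer's criterion, are routine.
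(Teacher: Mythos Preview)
Your proposal is correct and follows essentially the same route as the paper: chain Proposition~\ref{prop:joliThm} with Lemma~\ref{lemma:ncWiener} to obtain spectral invariance of $H^{1,\infty}_{\mathbb L}(\Gamma,\sigma)$ in $C^*(\Gamma,\sigma)$, note density via $C_C(\Gamma,\sigma)$, and invoke Remark~\ref{rmk:Schweizer} together with Proposition~\ref{prop:frechetalgebra} for stability under the holomorphic functional calculus. Your explicit use of $C^*$-spectral permanence to pass from $\mathcal{B}(\ell^2(\Gamma))$ to $C^*(\Gamma,\sigma)$, and your check that the Fr\'echet topology is finer than the $C^*$-norm topology, are details the paper leaves implicit but which strengthen the write-up.
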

\begin{proof}
Proposition \ref{prop:joliThm} and Lemma \ref{lemma:ncWiener} together give that $f$ is invertible in $H^{1, \infty}_{\mathbb{L}}(\Gamma, \sigma)$ if and only if $\lambda_{\sigma}(f)$ is invertible in $\mathcal{B}(\ell^2(\Gamma))$.  Hence  $H^{1, \infty}_{\mathbb{L}}(\Gamma, \sigma)$ has the property of spectral invariance in $C^*(\Gamma, \sigma),$ i.e., $f$ is invertible in $H^{1, \infty}_{\mathbb{L}}(\Gamma, \sigma)$ if and only if $f$ is invertible in  $C^*(\Gamma, \sigma).$   Since $H^{1, \infty}_{\mathbb{L}}(\Gamma, \sigma)$ contains  $ C_C(\Gamma, \sigma),$ it follows  that $H^{1, \infty}_{\mathbb{L}}(\Gamma, \sigma)$ is dense in $C^*(\Gamma,\sigma).$  By Proposition \ref{prop:frechetalgebra}, $H^{1, \infty}_{\mathbb{L}}(\Gamma, \sigma)$ is a twisted Fr\'echet $\ast$-subalgebra of $C^*(\Gamma,\sigma)$ (see Remark \ref{rmk:twisted Frechet algebra}).  Consequently, by Remark \ref{rmk:Schweizer},  $H^{1,\infty}_{\mathbb L}(\Gamma,\sigma)$ is stable in $C^*(\Gamma, \sigma)$ under the holomorphic functional calculus.  
\end{proof}	
\begin{rmk}
	In contrast to our spectral triple constructions for noncommutative solenoids, where we needed additional properties on  the length functions to ensure that we had Leibniz quantum compact metric spaces, our dense smooth subalgebras do not require the length function to have bounded doubling to obtain spectral invariance for $H^{1, \infty}_{\mathbb{L}}(\Gamma, \sigma)$ in $C^*(\Gamma, \sigma).$ However, our results do rely on the requirement that the group $\Gamma$ be countable, discrete, and nilpotent.
\end{rmk}

We now use the above results to modify our spectral triples for noncommutative solenoids by choosing the dense smooth subalgebras we just constructed.  Fix a prime $p$ and choice of $\theta \in \Omega_p$.  Let $\Gamma = \mathbb{Z}\Big[\frac{1}{p}\Big]\times \mathbb{Z}\Big[\frac{1}{p}\Big]$.  In Section 3, we showed that $(\mathcal{A}_{\theta}^{\mathcal S}, \ell^2(\Gamma), \mathcal{D}_{p})$ with representation $\lambda_{\sigma_{\theta}}$ is a spectral triple for the noncommutative solenoid $\mathcal{A}_{\theta}^{\mathcal S}$.  Recall that for every $\gamma \in \Gamma$,
\begin{equation*}
    L_{\mathcal{D}_p}(\delta_{\gamma}) = \| \, [\mathcal{D}_p, \lambda_{\sigma_{\theta}}(\delta_{\gamma})] \, \|_{\mathcal{B}(\ell^2(\Gamma))} = \mathbb{L}_p^{\Sigma}(\gamma).
\end{equation*}
Thus for every $f \in H_{\mathbb{L}_p^{\Sigma}}^{1, \infty}(\Gamma, \sigma)$, 
\begin{equation*}
     L_{\mathcal{D}_p}(f)= \| \, [\mathcal{D}_p, \lambda_{\sigma_{\theta}}(f)] \, \|_{\mathcal{B}(\ell^2(\Gamma))} = \Big\| \, \Big[\mathcal{D}_p, \sum_{\gamma \in\Gamma} f(\gamma) \, \lambda_{\sigma_{\theta}}(\delta_{\gamma})\Big] \, \Big\|_{\mathcal{B}(\ell^2(\Gamma))} = \Big\| \,  \sum_{\gamma \in \Gamma} f(\gamma) [\mathcal{D}_p, \lambda_{\sigma_{\theta}}(\delta_{\gamma}) ] \, \Big\|_{\mathcal{B}(\ell^2(\Gamma))} 
\end{equation*}
\begin{equation*}
    \leq \sum_{\gamma \in \Gamma} \, |f(\gamma)| \, \| [\mathcal{D}_p, \lambda_{\sigma_{\theta}}(\delta_{\gamma}) ] \, \|_{\mathcal{B}(\ell^2(\Gamma))} = \sum_{\gamma \in \Gamma} \, |f(\gamma)|  \mathbb{L}_p^{\Sigma}(\gamma) \leq \sum_{\gamma \in \Gamma} \, |f(\gamma)| (1 + \mathbb{L}_p^{\Sigma}(\gamma)) = \| f \|_{1, 1, \mathbb{L}_p^{\Sigma}} < \infty.
\end{equation*}
Induction can then be used to verify that for $k \in \mathbb{N}$,
\begin{equation*}
\| \, \underbrace{[\mathcal{D}_p, [\mathcal{D}_p, \cdots [\mathcal{D}_p, \lambda_{\sigma_{\theta}}(f)] \cdots], ] }_{k} \, \|_{\mathcal{B}(\ell^2(\Gamma))} \leq \| f \|_{1, k, \mathbb{L}_p^{\Sigma}} < \infty.
\end{equation*}
Taking the point of view that the iterated commutants above can be seen as a noncommutative form of higher-order differentiation, the above calculations validate our choice of  $H_{\mathbb{L}_p^{\Sigma}}^{1, \infty}(\Gamma, \sigma)$ as a smooth subalgebra in $C^*(\Gamma, \sigma).$  
With the selection of  $H_{\mathbb{L}_p^{\Sigma}}^{1, \infty}(\Gamma, \sigma)$ as  dense smooth subalgebra, our spectral triples for noncommutative solenoids can support further study of these inductive limit $C^*$-algebras as noncommutative manifolds.  We now are able to adjust the results of Theorem \ref{thm:ncsolenoidspectraltriple} using a dense subalgebra that has the property of spectral invariance.

\begin{cor}\label{cor: main thm}
Fix a prime $p$.  Let $\Gamma = \mathbb{Z}\Big[\frac{1}{p}\Big]\times \mathbb{Z}\Big[\frac{1}{p}\Big]$.  For every $\theta \in \Omega_p$, $(\mathcal{A}_{\theta}^{\mathcal S}, \ell^2(\Gamma), \mathcal{D}_{p})$ with representation $\lambda_{\sigma_{\theta}}$ is a spectral triple for the noncommutative solenoid $C^*(\Gamma,\sigma_{\theta})\;=\;\mathcal{A}_{\theta}^{\mathcal S},$ with  associated smooth subalgebra  $H_{\mathbb{L}_p^{\Sigma}}^{1, \infty}(\Gamma, \sigma_{\theta})$.  Furthermore, the twisted Fr\'echet $\ast$-subalgebra $H_{\mathbb{L}_p^{\Sigma}}^{1, \infty}(\Gamma, \sigma_{\theta})$ is a proper dense subalgebra of  $C^*(\Gamma,\sigma_{\theta})\;=\;\mathcal{A}_{\theta}^{\mathcal S}$ that is stable under the holomorphic functional calculus. 
\end{cor}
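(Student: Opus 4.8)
The plan is to assemble the corollary from the spectral-triple result already established in Theorem~\ref{thm:ncsolenoidspectraltriple} together with the spectral-invariance result of Theorem~\ref{prop:disc group multipl}, the only genuinely new ingredient being the verification that the enlarged algebra $H^{1,\infty}_{\mathbb{L}_p^{\Sigma}}(\Gamma,\sigma_\theta)$ still serves as a valid smooth subalgebra. First I would observe that condition (ST1) is independent of the choice of smooth subalgebra: the compact resolvent of $\mathcal{D}_p$ was proved in Theorem~\ref{thm:ncsolenoidspectraltriple}, so nothing need be redone there. The work is therefore concentrated on re-establishing (ST2) with $C_C(\Gamma,\sigma_\theta)$ replaced by the larger $H^{1,\infty}_{\mathbb{L}_p^{\Sigma}}(\Gamma,\sigma_\theta)$.

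For (ST2) I would invoke the commutator inequality~\eqref{inequal:normcommutator}. Expanding $f\in H^{1,\infty}_{\mathbb{L}_p^{\Sigma}}(\Gamma,\sigma_\theta)$ as the $\ell^1$-convergent sum $\sum_\gamma f(\gamma)\delta_\gamma$ and using that each delta function satisfies $\|[\mathcal{D}_p,\lambda_{\sigma_\theta}(\delta_\gamma)]\|=\mathbb{L}_p^{\Sigma}(\gamma)$, one obtains
\[
\|[\mathcal{D}_p,\lambda_{\sigma_\theta}(f)]\|_{\mathcal{B}(\ell^2(\Gamma))}\;\leq\;\sum_{\gamma\in\Gamma}|f(\gamma)|\,\mathbb{L}_p^{\Sigma}(\gamma)\;\leq\;\|f\|_{1,1,\mathbb{L}_p^{\Sigma}}\;<\;\infty,
\]
which is precisely the estimate displayed ahead of the statement; this shows that the commutator, initially defined on the span of delta functions, extends to a bounded operator. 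That $H^{1,\infty}_{\mathbb{L}_p^{\Sigma}}(\Gamma,\sigma_\theta)$ is a genuine dense $\ast$-subalgebra follows from Proposition~\ref{prop:frechetalgebra} (it is a twisted Fr\'echet $\ast$-algebra) together with the inclusion $C_C(\Gamma,\sigma_\theta)\subseteq H^{1,\infty}_{\mathbb{L}_p^{\Sigma}}(\Gamma,\sigma_\theta)$, which forces density since $C_C(\Gamma,\sigma_\theta)$ is already norm-dense in $C^*(\Gamma,\sigma_\theta)$.

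The remaining assertions are then read off from prior results. Since $\Gamma=\mathbb{Z}[\tfrac1p]\times\mathbb{Z}[\tfrac1p]$ is countable, discrete and abelian, hence nilpotent, Theorem~\ref{prop:disc group multipl} applies and yields that $H^{1,\infty}_{\mathbb{L}_p^{\Sigma}}(\Gamma,\sigma_\theta)$ is spectral invariant in $C^*(\Gamma,\sigma_\theta)$, which by Remark~\ref{rmk:Schweizer} is equivalent to stability under the holomorphic functional calculus. Properness of the inclusion is immediate from $H^{1,\infty}_{\mathbb{L}_p^{\Sigma}}(\Gamma,\sigma_\theta)\subseteq\ell^1(\Gamma,\sigma_\theta)\subsetneq C^*(\Gamma,\sigma_\theta)$, the last inclusion being strict because $\Gamma$ is infinite.

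I expect no single step to pose a serious obstacle, since the substantive analysis was all carried out earlier; the most delicate point is the passage in (ST2) from the finitely supported case to the whole of $H^{1,\infty}_{\mathbb{L}_p^{\Sigma}}(\Gamma,\sigma_\theta)$, where one must justify interchanging the commutator with an infinite sum of translates — but this is controlled precisely by the finiteness of $\|f\|_{1,1,\mathbb{L}_p^{\Sigma}}$, so the difficulty is only apparent. In short, the corollary is a synthesis in which the finite-summability and Lip-norm work of Sections~\ref{subsec:SpecTripSolandindlims}--\ref{subsec:SolCQMS} supplies the metric/spectral side and Theorem~\ref{prop:disc group multipl} supplies the $K$-theoretic spectral-invariance side.
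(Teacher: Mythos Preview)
Your proposal is correct and follows essentially the same route as the paper: the paper also draws the holomorphic-calculus stability from Theorem~\ref{prop:disc group multipl}, and relies on the commutator estimate $\|[\mathcal{D}_p,\lambda_{\sigma_\theta}(f)]\|\leq\|f\|_{1,1,\mathbb{L}_p^{\Sigma}}$ (worked out in the paragraph just before the corollary) to upgrade (ST2) from $C_C(\Gamma,\sigma_\theta)$ to $H^{1,\infty}_{\mathbb{L}_p^{\Sigma}}(\Gamma,\sigma_\theta)$. The only small difference is in the justification of properness: the paper infers it from the fact that every element of $H^{1,\infty}_{\mathbb{L}_p^{\Sigma}}(\Gamma,\sigma_\theta)$ has bounded commutator with $\mathcal{D}_p$ (while a generic element of $\mathcal{A}_\theta^{\mathcal S}$ does not), whereas you use the strict inclusion $\ell^1(\Gamma,\sigma_\theta)\subsetneq C^*(\Gamma,\sigma_\theta)$; both arguments are valid.
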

\begin{proof}
Theorem \ref{prop:disc group multipl} gives that $H_{\mathbb{L}_p^{\Sigma}}^{1, \infty}(\Gamma, \sigma_{\theta})$ is a dense twisted Fr\'echet $\ast$-subalgebra that is stable in $C^*(\Gamma, \sigma_{\theta})$ under the holomorphic functional calculus.  Since  $[\mathcal{D}_p, \lambda_{\sigma_{\theta}}(f)] \in \mathcal{B}(\ell^2(\Gamma))$ for every $f \in H_{\mathbb{L}_p^{\Sigma}}^{1, \infty}(\Gamma, \sigma_{\theta})$, $H_{\mathbb{L}_p^{\Sigma}}^{1, \infty}(\Gamma, \sigma_{\theta})$ is also a proper dense subalgebra of $C^*(\Gamma,\sigma_{\theta})\;=\;\mathcal{A}_{\theta}^{\mathcal S}$. 
\end{proof}

We also revisit and enhance the countable inductive system of finite-dimensional spectral triples considered in Theorem \ref{thm:FGinductivelim}.   For each $j \in \mathbb{N}$, $(C^*(\mathbb{Z}^2, \sigma_{\theta_{2j}}), \ell^2(\Gamma_j), D_{p,j})$ is a spectral triple for the twisted discrete abelian group $C^*$-algebra $C^*(\mathbb{Z}^2, \sigma_{\theta_{2j}})$ coming from the length function $\mathbb{L}_{p,j}^\Sigma.$  Theorem \ref{prop:disc group multipl} gives that $H_{\mathbb{L}_{p,j}^{\Sigma}}^{1, \infty}(\Gamma_j, (\sigma_{\theta})_j)$ is a proper dense twisted Fr\'echet $\ast$-subalgebra that is stable in $C^*(\Gamma_j, (\sigma_{\theta})_j)$ under the holomorphic functional calculus.  Recall that the representation $\pi_{\theta_j}$ is defined using a $^*$-isomorphism $\upsilon_j$ which takes $C^*(\mathbb{Z}^2, \sigma_{\theta_{2j}})$ to $C^*(\Gamma_j, (\sigma_{\theta})_j)$.  Our inductive system of $C^*$-algebras 
\begin{equation*}\label{eq:beta}
 \left\{ (C^*(\Gamma_j, (\sigma_{\theta})_j, \beta_{j,k}) \right\}_{j \in \mathbb{N}}   
\end{equation*}
therefore supports the inductive system of twisted Fr\'echet $\ast$-algebras 
\begin{equation*}
 \left\{ (H_{\mathbb{L}_{p,j}^{\Sigma}}^{1, \infty}(\Gamma_j, (\sigma_{\theta})_j), \beta_{j,k}) \right\}_{j \in \mathbb{N}}.     
\end{equation*}
Furthermore, each $H_{\mathbb{L}_{p,j}^{\Sigma}}^{1, \infty}(\Gamma_j, (\sigma_{\theta})_j)$ is $^*$-isomorphic to a proper dense subalgebra of the corresponding $C^*$-algebra in our inductive system of rotation algebras
\begin{equation*}
\left\{ (C^*(\mathbb{Z}^2, \sigma_{\theta_{2j}}), \phi_{j,k}) \right\}_{j \in \mathbb{N}}.   
\end{equation*}
In particular, each $\upsilon_{j}^{-1}\Big(H_{\mathbb{L}_{p,j}^{\Sigma}}^{1, \infty}(\Gamma_j, (\sigma_{\theta})_j\Big)$ can be explicitly identified with the twisted Fr\'echet $\ast$-algebra $H_{\mathbb{L}_{p,j}^{\Sigma}\circ\upsilon_{j} }^{1, \infty}(\mathbb{Z}^2, \sigma_{\theta_{2j}}),$ and thus is stable under the holomorphic functional calculus in $C^*(\mathbb{Z}^2, \sigma_{\theta_{2j}})$.  Hence our inductive system of twisted Fr\'echet $\ast$-algebras encodes the respective $K$-theories of our inductive system of rotation algebras.  Thus  $H_{\mathbb{L}_{p}^{\Sigma}}^{1, \infty}(\Gamma, \sigma_{\theta})$ can be expressed as the inductive limit of our inductive system of twisted Fr\'echet $\ast$-algebras  $\{H_{\mathbb{L}_{p,j}^{\Sigma}\circ\upsilon_{j} }^{1, \infty}(\mathbb{Z}^2, \sigma_{\theta_{2j}})\}_{j\in \mathbb N}.$  Since noncommutative solenoids can be identified with inductive limits of rotation algebras, we hope that equipping our inductive system of spectral triples for rotation algebras with the corresponding inductive system of twisted Fr\'echet $\ast$-algebras as the underlying smooth subalgebras in the limiting case will enrich the development, study, and understanding of the noncommutative geometry of noncommutative solenoids.

\end{document}